\documentclass[10pt, a4paper, twoside]{amsart}

\usepackage{amsthm}
\usepackage{amsmath}
\usepackage{amssymb}
\usepackage{amscd}

\usepackage[all]{xy}
\usepackage{tikz}
\usepackage{tikz-cd}
\usepackage{comment}

\usepackage[backref=page]{hyperref}
\usepackage{cleveref}
\usepackage{adjustbox}
\usepackage{mathtools}
\usepackage{enumitem}

\hypersetup{colorlinks=true,linkcolor=blue,anchorcolor=blue,citecolor=blue}

\newtheorem{thm}{\bf Theorem}[section]
\newtheorem{prop}[thm]{\bf Proposition}
\newtheorem{lem}[thm]{\bf Lemma}
\newtheorem{cor}[thm]{\bf Corollary}
\newtheorem{q}[thm]{\bf Question}
\newtheorem{conj}[thm]{\bf Conjecture}

\newtheorem*{thm*}{\bf Theorem}
\newtheorem*{cor*}{\bf Corollary}

\theoremstyle{definition}

\newtheorem{rem}[thm]{\it Remark}
\newtheorem{ex}[thm]{\bf Example}

\newtheorem*{df*}{\bf Definition}
\newtheorem*{dfs*}{\bf Definitions}
\newtheorem*{ack*}{\bf Acknowledgements}

\numberwithin{equation}{section}

\def\A{\mathbb{A}}
\def\P{\mathbb{P}}
\def\C{\mathbb{C}}
\def\F{\mathbb{F}}
\def\Q{\mathbb{Q}}

\def\Z{\mathbb{Z}}

\DeclareMathOperator{\alg}{alg}
\DeclareMathOperator{\Falg}{F-alg}
\DeclareMathOperator{\Char}{char}
\DeclareMathOperator{\tors}{tors}
\DeclareMathOperator{\Ker}{Ker}
\DeclareMathOperator{\Image}{Im}
\DeclareMathOperator{\Coker}{Coker}
\DeclareMathOperator{\Gal}{Gal}
\DeclareMathOperator{\nr}{nr}
\DeclareMathOperator{\cl}{cl}
\DeclareMathOperator{\tf}{tf}

\DeclareMathOperator{\Pic}{Pic}

\DeclareMathOperator{\Sym}{Sym}
\DeclareMathOperator{\id}{id}
\DeclareMathOperator{\Br}{Br}
\DeclareMathOperator{\Hom}{Hom}

\newcommand{\ov}{\overline}
\newcommand{\on}{\operatorname}
\newcommand{\mc}{\mathcal}

\hyphenation{Gro-then-dieck}

\author{Federico Scavia}
\address{CNRS\\
Institut Galil\'ee\\
	Universit\'e Sorbonne Paris Nord\\
	99 avenue Jean-Baptiste Cl\'ement, 93430\\ 
	Villetaneuse, France}
\email{scavia@math.univ-paris13.fr}

\author{Fumiaki Suzuki}
\address{Institute of Algebraic Geometry\\
Leibniz University Hannover\\
Welfengarten 1, 30167, Hannover\\
Germany
}
\email{suzuki@math.uni-hannover.de}

\title[\tiny Two coniveau filtrations and algebraic equivalence over finite fields]{
Two coniveau filtrations and algebraic equivalence over finite fields}

\date{February 14, 2024}
\subjclass[2020]{14C25, 14G15, 55R35}

\begin{document}

\maketitle

\begin{abstract}
We extend the basic theory of the coniveau and strong coniveau filtrations to the $\ell$-adic setting. By adapting the examples of Benoist--Ottem to the $\ell$-adic context, we show that the two filtrations differ over any algebraically closed field of characteristic not $2$. 

When the base field $\F$ is finite, we show that the equality of the two filtrations over the algebraic closure $\ov{\F}$ has some consequences for algebraic equivalence for codimension $2$ cycles over $\F$. As an application, we prove that the third unramified cohomology group $H^{3}_{\nr}(X,\Q_{\ell}/\Z_{\ell})$ vanishes for a large class of rationally chain connected threefolds $X$ over $\F$, confirming a conjecture of Colliot-Th\'el\`ene and Kahn.
\end{abstract}

\section{Introduction}

For a smooth complex projective variety $X$, the Betti cohomology $H^{*}_B(X, A)$ with coefficients in an abelian group $A$ is endowed with the coniveau and strong coniveau filtrations $N^*H^{*}_B(X, A)$ and $\widetilde{N}^*H^{*}_B(X, A)$, which satisfy $\widetilde{N}^{*}H^{*}_B(X,A)\subset N^{*}H^{*}_B(X,A)$.
As shown by Deligne \cite[Corollaire 8.2.8]{deligne1974theorie3} using the theory of mixed Hodge structures, the two filtrations coincide when $A=\Q$. In contrast, when $A=\Z$, Benoist--Ottem \cite{benoist2021coniveau} recently proved that the two filtrations differ in general.

\subsection{Two coniveau filtrations on \texorpdfstring{$\ell$}{l}-adic cohomology}
Let $X$ be a smooth projective variety over an algebraically closed field $k$. One may similarly define the coniveau and strong coniveau filtrations on $H^{*}(X,\Z_{\ell})$ and $H^{*}(X,\Q_{\ell})$.
By replacing mixed Hodge structures with Galois-module structures and Hironaka's resolution of singularities by de Jong's alterations, it is not difficult to show that the two coniveau filtrations on $H^{*}(X,\Q_{\ell})$ coincide; see \cref{lem-csc}.

Our first result shows that the two filtrations on $H^{*}(X,\Z_{\ell})$ do not agree in general, generalizing the main results of Benoist--Ottem \cite{benoist2021coniveau} to the $\ell$-adic setting. Note that \cref{benoist-ottem-4.3} only deals with $\ell=2$, but similar examples may be constructed for any odd prime $\ell$ invertible in $k$; see \cref{rem-on-oddell}.

\begin{thm}
\label{benoist-ottem-4.3}
    Let $k$ be an algebraically closed field of characteristic different from $2$. 
    
    (1) For all $c\geq 1$ and $l\geq 2c+1$, there exists a smooth projective $k$-variety such that the inclusion \[\widetilde{N}^cH^l(X,\Z_2)\subset N^cH^l(X,\Z_2)\] is strict. Moreover, one can choose $X$ with torsion canonical bundle, and if $c\geq 2$ one can choose $Y$ rational. 
    
    (2) There exists a smooth projective fourfold $X$ such that the inclusion 
    \[\widetilde{N}^1H^3(X,\Z_2)\subset N^1H^3(X,\Z_2)\] 
    is strict. Similarly, there exists a smooth projective fivefold $X$ such that the inclusion
    \[\widetilde{N}^1H^4(X,\Z_2)\subset N^1H^4(X,\Z_2)\] is strict.

    (3) There exists a smooth projective $k$-variety $X$ such that $H^4(X,\Z_2)$ is torsion free and the inclusion 
    \[\widetilde{N}^1H^4(X,\Z_2)\subset N^1H^4(X,\Z_2)\] is strict.
\end{thm}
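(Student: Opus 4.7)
\emph{Proof proposal.}

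The plan is to adapt the constructions of Benoist--Ottem \cite{benoist2021coniveau} from singular cohomology with $\Z$-coefficients to $\ell$-adic cohomology with $\Z_{2}$-coefficients. Both coniveau filtrations are defined in terms of cycle-theoretic operations---kernels of restriction to open complements for $N^{*}$, and images of Gysin pushforwards from smooth projective varieties for $\widetilde{N}^{*}$---so they transport naturally under Artin comparison and, with some care, under specialization.

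When $k = \mathbb{C}$, Artin comparison gives a natural isomorphism $H^{l}_{B}(X(\mathbb{C}),\Z) \otimes_{\Z} \Z_{2} \cong H^{l}(X,\Z_{2})$ compatible with cycle class maps, Poincar\'e duality and Gysin pushforwards along smooth projective morphisms. In particular it identifies $N^{c}H^{l}_{B}(X,\Z) \otimes \Z_{2}$ with $N^{c}H^{l}(X,\Z_{2})$, and similarly for $\widetilde{N}^{c}$. Consequently any Benoist--Ottem witness $\alpha \in N^{c}H^{l}_{B}(X,\Z) \setminus \widetilde{N}^{c}H^{l}_{B}(X,\Z)$ produces a witness $\alpha \otimes 1 \in N^{c}H^{l}(X,\Z_{2}) \setminus \widetilde{N}^{c}H^{l}(X,\Z_{2})$, since in all three parts of \cite{benoist2021coniveau} the obstructions used---Steenrod squares on mod $2$ cohomology, or the failure of a mod $2$ class to lift to an algebraic class---are of $2$-torsion origin and remain visible after tensoring with $\Z_{2}$.

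To pass from $\mathbb{C}$ to an arbitrary algebraically closed field of characteristic $\neq 2$, one spreads each Benoist--Ottem example out to a smooth projective scheme over $\on{Spec}\Z[1/2]$. Their elementary building blocks---products, projective bundles, blowups along smooth centers, Godeaux--Serre type finite quotients, and double covers branched along smooth divisors---all admit natural integral models. Smooth and proper base change identifies the $\Z_{2}$-cohomology of any two geometric fibers, algebraic cycles and smooth projective sources producing coniveau witnesses lift along such specializations, and the $2$-torsion obstructions distinguishing $\alpha$ from $\widetilde{N}^{c}$ are compatible with base change. The auxiliary conditions in (1) and (3)---torsion canonical bundle, or torsion-free $H^{4}$---propagate from $\mathbb{C}$ to each geometric fiber once the integral spread is smooth and the generic fiber has the required property.

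The principal obstacle is that, a priori, $\widetilde{N}^{c}H^{l}(X,\Z_{2})$ could become strictly larger in positive characteristic because of smooth projective sources $f \colon \widetilde{Y} \to X$ that do not lift to characteristic zero. The cleanest way around this is to formulate the obstruction positively: exhibit a cohomological invariant that vanishes on $\widetilde{N}^{c}$ by construction yet is nonzero on $\alpha$ at every geometric fiber. Transporting the Steenrod-square and mod $2$ cycle class obstructions of Benoist--Ottem into such an intrinsic $\ell$-adic form---so that their nonvanishing on $\alpha$ becomes a base-change invariant statement---is the technical core of the proof.
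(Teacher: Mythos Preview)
Your diagnosis is accurate: the genuine difficulty is that $\widetilde{N}^{c}$ could enlarge in positive characteristic via smooth projective sources that do not lift, and the remedy is to make the Steenrod-square obstruction intrinsic to \'etale cohomology so that it annihilates $\widetilde{N}^{c}$ over any algebraically closed base. However, your proposal stops precisely at this point, labeling it ``the technical core of the proof'' without carrying it out. That core is the main new ingredient the paper supplies: a relative Wu formula for \'etale Stiefel--Whitney classes (\Cref{wu-theorem}), asserting that for any morphism $f\colon Y\to X$ of smooth projective $k$-varieties one has $\on{Sq}(f_{*}\beta)=f_{*}(\on{Sq}(\beta)\cup w(N_{f}))$ in $H^{*}(X,\Z/2)$. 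From this one deduces that the operation $S_{j}=\on{Sq}^{2^{j}-1}\cdots\on{Sq}^{3}$ commutes with $f_{*}$ on classes with $\on{Sq}^{1}=0$ (\Cref{benoist-ottem-3.?}), and the criterion $S_{j}(\pi_{2}\alpha)\neq 0\Rightarrow \alpha\notin\widetilde{N}^{c}$ (\Cref{benoist-ottem-3.5}) then holds directly over $k$, with no appeal to characteristic zero. Without this, your outline has no mechanism to rule out new positive-characteristic pushforwards hitting $\alpha$.

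Your spreading-out picture for the \emph{examples} is also not how the paper proceeds. Benoist--Ottem's varieties arise from Ekedahl's projective approximations of classifying spaces, and these are not given explicitly enough to have controlled integral models in the way you suggest. The paper instead reruns Ekedahl's construction directly over $k$ to produce $X$ and the witness class $\alpha$, and then checks $S_{j}(\pi_{2}\alpha)\neq 0$ using the \'etale Wu machinery above. Specialization from characteristic zero appears only in a supporting role: for part~(2), to import the existence of a fourfold with a suitable $2$-torsion class (handled separately over finite fields in \cite{scavia2022cohomology}), and for part~(3), to transport the algebraicity of a multiple of $\alpha$ via \Cref{fulton-specialization}. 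So your outline points in the right direction, but the substantive content---the \'etale relative Wu theorem and the direct construction of the varieties over $k$---still needs to be supplied.
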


\cref{benoist-ottem-4.3} will be proved in \Cref{sec9}.
As a key step in the proof of \cref{benoist-ottem-4.3}, we establish relative Wu's theorem for Stiefel-Whitney classes; see \cref{wu-theorem}.

Among the statements of \cref{benoist-ottem-4.3}, the discrepancy between $N^1H^{*}$ and $\widetilde{N}^1H^{*}$ is the most interesting, because $N^1H^{*}(X,\Z_\ell)/\widetilde{N}^1H^{*}(X,\Z_\ell)$ is a stable birational invariant; see \cref{n1stable}.
This will serve as motivation for \cref{thm-csc} and its applications below, which are independent of \cref{benoist-ottem-4.3}.

\subsection{Algebraic equivalence over finite fields}
The main purpose of this paper is to show that the {\it equality} of $N^1H^3$ and $\widetilde{N}^1H^3$ has consequences for the algebraic equivalence for codimension $2$-cycles over finite fields.

Let $\F$ be a finite field, $\ov{\F}$ be an algebraic closure of $\F$, $\ell$ be a prime number invertible in $\F$, $X$ be a smooth projective geometrically connected $\F$-variety, and $\ov{X}=X\times_{\F}\ov{\F}$. 
We denote by $CH^2(X)_{\alg}$ (resp. $CH^2(X)_{\hom}$) the subgroup of $CH^2(X)$ consisting of all cycles classes whose pull-backs to $CH^2(\ov{X})$ are algebraically (resp. homologically) trivial.
Recall that Jannsen \cite{jannsenn1990mixed} constructed an $\ell$-adic Abel-Jacobi map
\[
\cl_{\on{AJ}}\colon CH^2(X)_{\on{hom}}\otimes\Z_{\ell}\to H^1(\F, H^3(\ov{X},\Z_\ell(2))).
\]

Here is our main theorem.

\begin{thm}\label{thm-csc}
Let $X$ be a smooth projective geometrically connected variety over a finite field $\F$ and $\ell$ be a prime number invertible in $\F$.
Suppose that
\begin{equation}\label{coniveau=strong}
N^1H^3(\ov{X},\Z_\ell(2))=\widetilde{N}^1H^3(\ov{X},\Z_\ell(2)).
\end{equation}
Then the following statements hold:

(1) The $\ell$-adic Abel-Jacobi map $\cl_{AJ}$ induces
an isomorphism
\[\cl_a\colon CH^2(X)_{\alg}\otimes \Z_\ell\xrightarrow{\sim} H^1(\F, N^1H^3(\ov{X},\Z_\ell(2))),\] 
and 
an isomorphism
\[
\phi_{\ell}\colon \Ker\left(CH^2(X)\rightarrow CH^2(\overline{X})\right)\{\ell\}\xrightarrow{\sim} H^1(\F, H^3(\overline{X},\Z_\ell(2))_{\tors}).
\]

(2)
The base change map
$CH^{2}(X)_{\alg}\{\ell\}\rightarrow \left(CH^{2}(\ov{X})_{\alg}\right)^{G}\{\ell\}$
is surjective.

(3) A cycle class in $CH^2(X)\{\ell\}$ is algebraically trivial over $\F$ if and only if it is algebraically trivial over $\ov{\F}$.
\end{thm}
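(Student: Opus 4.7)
The ``only if'' direction is immediate, since algebraic equivalence over $\F$ pulls back to algebraic equivalence over $\ov\F$. For the converse, suppose $\alpha\in CH^2(X)\{\ell\}$ has $\ov\alpha\in CH^2(\ov X)_{\alg}$, so $\alpha\in CH^2(X)_{\alg}\{\ell\}$ and $\ov\alpha\in (CH^2(\ov X)_{\alg})^G\{\ell\}$. Write $A\subseteq CH^2(X)_{\alg}$ for the subgroup of cycles algebraically trivial over $\F$ in the strong sense; the goal is $\alpha\in A$. By part (2) applied to $\ov\alpha$, there exists $\beta\in CH^2(X)_{\alg}\{\ell\}$ with $\ov\beta=\ov\alpha$. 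Inspecting the proof of (2) --- where the surjection is obtained by pushing forward $\F$-rational Abel--Jacobi classes from smooth projective $\F$-curves mapping into $X$ --- one can arrange $\beta\in A$. Setting $\gamma:=\alpha-\beta\in K:=\ker(CH^2(X)\to CH^2(\ov X))\{\ell\}$, it remains to show $K\subseteq A$.

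For this step I would use part (1). The isomorphism $\phi_\ell$ identifies $K$ with $H^1(\F,H^3(\ov X,\Z_\ell(2))_{\tors})$. The containment $H^3(\ov X,\Z_\ell(2))_{\tors}\subseteq N^1H^3(\ov X,\Z_\ell(2))$, which holds over any algebraically closed field by the Merkurjev--Suslin theorem combined with the Bockstein description of $H^3$-torsion (every class in $H^2(\ov X,\mu_{\ell^n}^{\otimes 2})$ vanishes upon restriction to the complement of some divisor), induces a map $H^1(\F,H^3(\ov X,\Z_\ell(2))_{\tors})\to H^1(\F,N^1H^3(\ov X,\Z_\ell(2)))$. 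Since both $\phi_\ell$ and $\cl_a$ come from $\cl_{AJ}$, this map fits into a commutative diagram compatible with the inclusion $K\hookrightarrow CH^2(X)_{\alg}\otimes\Z_\ell$. Applying $\cl_a^{-1}$ to the image of $\gamma$ and unwinding the $\F$-curve parametrization of $N^1$-classes then realizes $\gamma$ as a cycle in $A$, completing the argument.

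\textbf{Main obstacle.} The delicate point, both in the invocation of (2) and in the recovery of $\gamma$ via $\cl_a^{-1}$, is guaranteeing that representatives actually lie in $A$ rather than merely in the larger group $CH^2(X)_{\alg}$ (pullback algebraically trivial) used in the statements of (1) and (2). This amounts to showing that every class in $H^1(\F,N^1H^3(\ov X,\Z_\ell(2)))$ can be realized by an Abel--Jacobi construction using a smooth projective $\F$-curve $C$ mapping to $X$ and an $\F$-rational divisor of degree zero on $C$ --- a refinement of part (1) that must be extracted by careful bookkeeping of the parametrizing data in its proof.
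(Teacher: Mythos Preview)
Your ``Main obstacle'' is exactly the heart of the matter, and the paper isolates it as a standalone result (Proposition~\ref{thm-WAJ}): the Walker Abel--Jacobi map $\cl_a$ restricted to $CH^2(X)_{\Falg}\otimes\Z_\ell$ is an isomorphism onto the image of $H^1(\F,\widetilde{N}^1H^3(\ov X,\Z_\ell(2)))$ inside $H^1(\F,N^1H^3(\ov X,\Z_\ell(2)))$. Under the hypothesis $N^1=\widetilde{N}^1$, this image is everything. The proof goes through Lemma~\ref{lem-cscwaj} and Proposition~\ref{thm-surjection}, which formalize precisely the ``$\F$-curve parametrization'' you allude to (pushing forward divisor classes from smooth projective connected $\F$-varieties via the surjectivity of $\cl_{AJ}$ in codimension $1$, Lemma~\ref{prop-codim1d}).

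Once this refinement is in hand, however, your decomposition $\alpha=\beta+\gamma$ becomes unnecessary. Since $\cl_a$ is injective on all of $CH^2(X)_{\alg}\otimes\Z_\ell$ (Lemma~\ref{lem-algajinj}) and surjective already on the subgroup $CH^2(X)_{\Falg}\otimes\Z_\ell$, one immediately obtains $CH^2(X)_{\Falg}\otimes\Z_\ell=CH^2(X)_{\alg}\otimes\Z_\ell$, which is exactly~(3) (both groups being torsion by Lemma~\ref{algtorsion}). The paper's proof runs in this order: (3) and the first isomorphism of~(1) fall out together in one line, and then~(2) and the second isomorphism of~(1) are read off from the exact sequence of Proposition~\ref{thm-sequence}. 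So rather than deducing~(3) from~(1) and~(2), the paper deduces all three from the single refined surjectivity statement you flagged; your route would work but duplicates effort, since the argument you give for $\gamma\in K$ applies verbatim to $\alpha$ itself.
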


Several remarks are in order.

\begin{enumerate}[label=(\roman*)]
    \item 
    The map $\cl_{a}$ in \cref{thm-csc} (1) may be thought as a finite field analogue of an algebraic Abel-Jacobi map over the complex numbers due to Lieberman \cite{lieberman1972intermediate}.
    Note that the map $\cl_a$ is always injective (cf. \cref{lem-algajinj}), even without assuming (\ref{coniveau=strong}), whereas over the complex numbers the kernel of Lieberman's map may be infinite dimensional.
\item 
 
The map $\phi_\ell$ in \cref{thm-csc} (1) appears as the first map in an exact sequence established by Colliot-Th\'el\`ene and Kahn \cite[Th\'eor\`eme 6.8]{colliot2013cycles},
which controls the Galois descent of codimension $2$ cycles on $X$. In course of the proof of \cref{thm-csc}, we refine this exact sequence; see Propositions \ref{thm-sequence} and \ref{prop-twoexactseq}.
\item The assumption (\ref{coniveau=strong}) is necessary for \Cref{thm-csc} (1). Indeed,
\Cref{benoist-ottem-4.3} and our previous work \cite{scavia2022cohomology} provide examples
of non-algebraic classes coming from $H^{1}(\F,H^{3}(\ov{X},\Z_{\ell}(2))_{\tors})$.
\item The authors do not know whether \Cref{thm-csc} (2) and (3) remain true without assuming (\ref{coniveau=strong}). More generally, the question of whether algebraic triviality over $\F$ is equivalent to algebraic triviality over $\ov{\F}$ is currently open.
\end{enumerate}

\subsection{A conjecture of Colliot-Th\'el\`ene and Kahn}

The surjectivity statements in \cref{thm-csc} (1)
lead us to an application of the equality $N^1H^3=\widetilde{N}^1H^3$ to the vanishing of the third unramified cohomology group $H^3_{\nr}(X,\Q_\ell/\Z_\ell(2))$. This abelian group is
a stable birational invariant, defined in the framework of the Bloch-Ogus theory \cite{bloch1974gersten}.

Colliot-Th\'el\`ene and Kahn posed the following conjecture.

\begin{conj}[\cite{colliot2013cycles}, Conjecture 5.7]\label{conj-CTK}
For all smooth projective geometrically uniruled $\F$-varieties $X$ of dimension $3$, we have $H^{3}_{\nr}(X,\Q_{\ell}/\Z_{\ell}(2))=0$.
\end{conj}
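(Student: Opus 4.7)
The plan is to derive the conjecture from \cref{thm-csc} combined with the exact sequence of Colliot-Th\'el\`ene–Kahn expressing $H^{3}_{\nr}(X,\Q_{\ell}/\Z_{\ell}(2))$ as the cokernel of an $\ell$-adic Abel-Jacobi map. By \cite{colliot2013cycles} (in the form refined in Propositions \ref{thm-sequence} and \ref{prop-twoexactseq} of the present paper), the vanishing $H^{3}_{\nr}(X,\Q_{\ell}/\Z_{\ell}(2))=0$ follows from the combination of (i) the surjectivity of $\cl_{AJ}$ onto $H^{1}(\F,N^{1}H^{3}(\ov{X},\Z_{\ell}(2)))$, and (ii) the rational-coefficient equality $N^{1}H^{3}(\ov{X},\Q_{\ell}(2))=H^{3}(\ov{X},\Q_{\ell}(2))$. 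For a geometrically uniruled threefold, condition (ii) is automatic: choose a dominant rational map $\P^{1}\times S\dashrightarrow\ov{X}$ with $\dim S=2$, apply de Jong alterations to produce a generically finite morphism from a smooth projective threefold birational to $\P^{1}\times S$, and use the K\"unneth decomposition together with a trace argument to exhibit every class in $H^{3}(\ov{X},\Q_{\ell}(2))$ as supported on a proper closed subvariety.

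With (ii) in hand, condition (i) reduces via \cref{thm-csc}(1) to verifying the integral filtration equality
\[
N^{1}H^{3}(\ov{X},\Z_{\ell}(2))=\widetilde{N}^{1}H^{3}(\ov{X},\Z_{\ell}(2))
\]
for every geometrically uniruled threefold $X$. To attack this, I would exploit that the quotient $N^{1}H^{3}/\widetilde{N}^{1}H^{3}$ is a stable birational invariant (\cref{n1stable}), so it suffices to treat a single smooth projective model in each stable birational class. One then stratifies uniruled threefolds by the dimension of the base of a maximal rationally chain connected fibration $\ov{X}\dashrightarrow B$: in each case ($\dim B\in\{0,1,2\}$) the generic fiber is rationally chain connected, and after alterations one constructs, using a conic-bundle, del Pezzo fibration, or direct rational-connectedness argument respectively, a presentation of every integral class in $N^{1}H^{3}(\ov{X},\Z_{\ell}(2))$ as a pushforward from a smooth divisor. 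The relative Wu-type formula of \cref{wu-theorem} would be invoked throughout to control the Stiefel-Whitney obstructions responsible for the failures recorded in \cref{benoist-ottem-4.3}.

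The main obstacle is precisely this last step. \cref{benoist-ottem-4.3} shows that $N^{1}H^{3}/\widetilde{N}^{1}H^{3}$ can be nonzero even for very mild smooth projective varieties, so establishing the filtration equality for \emph{all} geometrically uniruled threefolds — as opposed to the restricted rationally chain connected subclass treated in the body of the paper — amounts to proving that the $\ell$-adic Benoist-Ottem obstruction never survives on a uniruled threefold. I expect this to require an input beyond the current techniques: most plausibly a $\Z_{\ell}$-coefficient decomposition of the diagonal modulo algebraic equivalence, coupled with a description of $H^{3}(\ov{X},\Z_{\ell}(2))_{\tors}$ via the geometric Brauer group and cohomological purity, in order to exhibit every torsion class explicitly as a pushforward from a smooth ample divisor. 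Without such a structural result, the gap between "rationally chain connected" and the full uniruled hypothesis of \cref{conj-CTK} cannot be closed by the present methods alone.
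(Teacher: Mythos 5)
The statement you were asked to prove is Conjecture~\ref{conj-CTK}, attributed to Colliot-Th\'el\`ene and Kahn, and the paper does \emph{not} prove it: it is explicitly posed as an open conjecture, with Theorems~\ref{thm-RCC} and~\ref{thm-Fano} establishing only restricted special cases. Your final paragraph correctly acknowledges that the full uniruled case is beyond the reach of the paper's techniques, which is the right conclusion; no ``proof'' exists here to be reproduced.

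However, your reduction of the conjecture is missing a second ingredient, independent of the coniveau filtration equality you focus on. Even granting both $N^{1}H^{3}(\ov{X},\Z_{\ell}(2))=\widetilde{N}^{1}H^{3}(\ov{X},\Z_{\ell}(2))$ and $N^{1}H^{3}(\ov{X},\Q_{\ell}(2))=H^{3}(\ov{X},\Q_{\ell}(2))$, the vanishing of $H^{3}_{\nr}(X,\Q_{\ell}/\Z_{\ell}(2))$ via \cite[Theorem 2.2]{colliot2013cycles} (repackaged in \cref{prop-strongmiddletate}) requires that the cycle class map \eqref{strong-tate} have torsion-free cokernel, and by the Hochschild--Serre diagram \eqref{hs-intro} this decomposes into surjectivity of $\cl_{AJ}$ \emph{together with} surjectivity (with torsion-free cokernel) of the base-change map \eqref{medium-tate} onto $H^{4}(\ov{X},\Z_{\ell}(2))^{G}$. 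The latter is an integral Tate-type statement for $H^{4}$ of a threefold over $\ov{\F}$ restricted to Galois-invariant classes, open in general even for uniruled threefolds. The paper's \cref{thm-RCC} secures it only by spreading out Voisin's theorem that $H^{4}_{B}(X(\C),\Z)$ is spanned by algebraic classes (available for rationally connected threefolds, not arbitrary uniruled ones), and \cref{thm-Fano} secures it via the Lefschetz hyperplane theorem and a degree-$1$ zero-cycle on the Fano scheme of lines. Your proposal never addresses this cycle-map surjectivity. So the conjecture in fact has \emph{two} independent obstructions to closure by the paper's methods --- the integral strong coniveau equality (precisely the paper's \cref{uniruled}) and the surjectivity of \eqref{medium-tate} --- and any valid proof must resolve both, not just the first.
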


Over the complex numbers, Voisin showed that $H^{3}_{\nr}(X,\Q/\Z(2))=0$ for all uniruled threefolds.
Over an algebraic closure of a finite field,  the Tate conjecture for surfaces would imply the vanishing of the third unramified cohomology for a wider class of threefolds according to a result of Schoen \cite{schoen1998integral} (see \cite[Propositions 3.2 and 4.2]{colliot2013cycles} for the precise statements).
\cref{conj-CTK} has been confirmed in the case of conic bundles over surfaces by Parimala--Suresh \cite{parimala2016degree}.

We confirm \cref{conj-CTK} for a large class of rationally chain connected threefolds.

\begin{thm}\label{thm-RCC}
Let $X$ be a smooth projective rationally connected threefold over a number field $K$.
Suppose that $H^{3}_B(X(\C),\Z)$ is torsion-free. For every prime $p\in \on{Spec}(\mc{O}_K)$, we write $k_p$ for the residue field of $p$.

Then for all but finitely many primes $p\in \on{Spec}(\mc{O}_K)$, there exist a finite field extension $k'_p/k_p$ such that for all finite extensions $E/k'_p$ the $E$-variety $X_E$ is smooth, projective, rationally chain connected and $H^3_{\nr}(X_E,\Q_\ell/\Z_\ell(2))=0$ for all prime numbers $\ell$ invertible in $k_p$.
\end{thm}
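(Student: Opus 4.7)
The plan is to combine a good reduction argument with \Cref{thm-csc} and the refined Colliot-Th\'el\`ene-Kahn exact sequence that controls $H^{3}_{\nr}$ via $CH^{2}$ and Galois cohomology.

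First, I would spread $X$ out to a smooth projective model over a dense open $\on{Spec}(\mathcal{O}_{K,S})$; for every prime $p\notin S$ the reduction $X_{k_{p}}$ is smooth projective and, by the Koll\'ar-Miyaoka-Mori specialization theorem, rationally chain connected. Smooth and proper base change identifies $H^{i}(\ov{X}_{k_{p}},\Z_{\ell})$ with $H^{i}_{B}(X(\C),\Z)\otimes\Z_{\ell}$ for every $\ell$ invertible in $k_{p}$, so $H^{3}(\ov{X}_{k_{p}},\Z_{\ell})$ remains torsion-free.

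The heart of the argument is to verify, after possibly replacing $k_{p}$ by a finite extension $k'_{p}$, the stronger equality $\widetilde{N}^{1}H^{3}(\ov{X}_{k_{p}},\Z_{\ell}(2))=H^{3}(\ov{X}_{k_{p}},\Z_{\ell}(2))$. Over $\C$, rational connectedness of $X$ produces a Bloch-Srinivas decomposition of the diagonal $\Delta_{X}=\Gamma_{1}+\Gamma_{2}$ in $CH^{3}(X\times X)$, with $\Gamma_{2}$ supported on $D\times X$ for some divisor $D$; resolving $D$ by a smooth surface $\widetilde{D}$ and invoking the torsion-freeness of $H^{3}_{B}(X(\C),\Z)$ yields an integral factorization of $\id_{H^{3}_{B}(X(\C),\Z)}$ through $H^{1}_{B}(\widetilde{D}(\C),\Z)$, hence $\widetilde{N}^{1}H^{3}_{B}(X(\C),\Z)=H^{3}_{B}(X(\C),\Z)$. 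Spreading the pair $(\widetilde{D},\Gamma_{2})$ over a dense open of $\on{Spec}(\mathcal{O}_{K})$, reducing modulo $p$, and base changing to a finite extension $k'_{p}/k_{p}$ large enough that the reduction of $\widetilde{D}$ is smooth then transfers this to \eqref{coniveau=strong} in $\ell$-adic cohomology, for every finite $E/k'_{p}$ in place of $\F$.

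Once \eqref{coniveau=strong} is in hand, \Cref{thm-csc}(1) produces an isomorphism
\[
\cl_{a}\colon CH^{2}(X_{E})_{\alg}\otimes\Z_{\ell}\xrightarrow{\sim}H^{1}\bigl(E,H^{3}(\ov{X}_{k_{p}},\Z_{\ell}(2))\bigr),
\]
with the full $H^{3}$ appearing on the right because $N^{1}=H^{3}$ by the previous step. Feeding this surjectivity into the refined exact sequence of \Cref{thm-sequence} and \Cref{prop-twoexactseq}, which positions $H^{3}_{\nr}(X_{E},\Q_{\ell}/\Z_{\ell}(2))$ as the obstruction to the surjectivity of $\cl_{\on{AJ}}$ onto $H^{1}(E,H^{3}(\ov{X}_{k_{p}},\Z_{\ell}(2)))$, yields the desired vanishing.

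The main obstacle is the propagation of the strong coniveau equality from $\C$ to $\ov{k}_{p}$. The strong coniveau filtration is defined using genuinely smooth auxiliary varieties, and while the pair $(\widetilde{D},\Gamma_{2})$ always spreads out over a dense open of $\on{Spec}(\mathcal{O}_{K})$, one must verify that its special fiber stays smooth and that the specialized correspondence continues to cover $H^{3}$ at the integral level. This is precisely what forces both the exclusion of finitely many primes and the passage to the extension $k'_{p}$ in the statement.
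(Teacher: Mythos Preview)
Your argument has two genuine gaps.

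First, the Bloch--Srinivas step does not give what you claim. Rational connectedness only yields a decomposition $N\Delta_X=\Gamma_1+\Gamma_2$ for some integer $N\geq 1$; an integral decomposition would be a Chow-theoretic decomposition of the diagonal, which is unknown for general rationally connected threefolds (e.g.\ it is open for smooth cubic threefolds). Thus you only obtain $N\cdot H^3_B(X(\C),\Z)\subset \widetilde N^1H^3_B(X(\C),\Z)$, and torsion-freeness of $H^3$ does not let you divide the correspondence by $N$. The equality $\widetilde N^1H^3_B=H^3_B$ under your hypotheses is a theorem of Voisin \cite[Theorem 0.2]{voisin2020coniveau}, proved by producing a surjective cylinder map from an explicit family of curves; this is what the paper actually invokes.

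Second, and more seriously, your final step misreads \Cref{thm-sequence} and \Cref{prop-twoexactseq}: they do \emph{not} exhibit $H^3_{\nr}(X_E,\Q_\ell/\Z_\ell(2))$ as the obstruction to the surjectivity of $\cl_{AJ}$. The implication they provide runs the other way (see the remark after \Cref{prop-twoexactseq}). By \cite[Th\'eor\`eme 2.2]{colliot2013cycles} the group $H^3_{\nr}$ measures, modulo its divisible part, the torsion cokernel of the full cycle map \eqref{strong-tate}, and by \eqref{hs-intro} surjectivity of \eqref{strong-tate} requires \emph{both} the surjectivity of $\cl_{AJ}$ \emph{and} that of \eqref{medium-tate}. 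You never address \eqref{medium-tate}. The paper handles it by appealing to Voisin's integral Hodge conjecture for uniruled threefolds \cite{voisin2006integral}, which makes $H^4_B(X(\C),\Z)$ algebraic; specializing the codimension-$2$ cycles that span $H^4$ is precisely what forces the passage to the finite extension $k'_p$ (not the smoothness of the reduction of $\widetilde D$, which already holds for almost all $p$ with no extension). With both surjectivities in place, the paper concludes via \Cref{prop-strongmiddletate}, using that $CH_0(\ov{X}_{\ov{K}})=\Z$ to pass from statement (2) to (1).
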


It should be noted that the implication
\[H^{3}_{\on{nr}}(\ov{X},\Q_{\ell}/\Z_{\ell}(2))=0\Longrightarrow H^{3}_{\on{nr}}(X_E,\Q_{\ell}/\Z_{\ell}(2))=0 \text{ for some finite extension $E/\F$}\]
is not expected to be true in general. 
Indeed, 
let $X$ be a smooth projective geometrically connected $\F$-variety as in the proof of \cite[Theorem 1.3]{scavia2022cohomology}.
Then $H^3_{\nr}(\ov{X},\Q_\ell/\Z_\ell(2))$ is divisible, but $H^3_{\nr}(X_E,\Q_\ell/\Z_\ell(2))$ is not divisible for every finite extension $E/\F$. 
Moreover, $H^3_{\text{tr}}(\ov{X},\Z_\ell):=H^3(\ov{X},\Z_\ell)/N^1H^3(\ov{X},\Z_\ell)=0$, hence by \cite[Th\'eor\`eme 5.2 (c)]{kahn2012classes} combined with \cite[Propori\'et\'es 3.14 (vii)]{colliot2013cycles},
the (rational) Tate and Beilinson conjectures \cite[Conjectures 3.11, 3.12]{colliot2013cycles} 
together imply that $H^3_{\nr}(\ov{X},\Q_\ell/\Z_\ell(2))=0$ and  $H^3_{\nr}(X_E,\Q_\ell/\Z_\ell(2))\neq 0$ for all finite extensions $E/\F$.

The assumptions of \cref{thm-RCC} are satisfied for Fano threefolds, as can be deduced from the classification.
In the case of smooth Fano complete intersection threefolds, we show the following more precise statement, generalizing a result of Colliot-Th\'el\`ene on smooth cubic threefolds \cite[Theorem 5.1]{colliot2019troisieme}.

\begin{thm}\label{thm-Fano}
Let $X\subset \P^N$ be a smooth Fano complete intersection threefold over a finite field $\F$. 
Suppose that the Fano scheme of lines on $X$ is smooth of the expected dimension.
Then $H^3_{\on{nr}}(X,\Q_\ell/\Z_\ell(2))=0$.
\end{thm}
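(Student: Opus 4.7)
The plan is to verify the hypothesis $N^{1}H^{3}(\ov{X},\Z_{\ell}(2))=\widetilde{N}^{1}H^{3}(\ov{X},\Z_{\ell}(2))$ of \cref{thm-csc} by exploiting the Fano surface of lines, and then combine the cycle-theoretic conclusions of \cref{thm-csc}(1) with the Colliot-Th\'el\`ene--Kahn exact sequence \cite[Th\'eor\`eme 2.2]{colliot2013cycles} presenting $H^{3}_{\nr}$ as a cokernel of cycle class maps. This mirrors the structure of Colliot-Th\'el\`ene's proof for smooth cubic threefolds \cite[Theorem 5.1]{colliot2019troisieme}, which is recovered as a special case.

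Write $F=F(X)$ for the Fano scheme of lines, which by hypothesis is a smooth projective surface, and let $\mathcal{L}\subset F\times X$ be the universal line with projections $p\colon \mathcal{L}\to F$ and $q\colon \mathcal{L}\to X$. For a smooth Fano complete intersection threefold with Fano scheme of lines of the expected dimension, $X$ is covered by lines, so $q$ is surjective. The correspondence $\mathcal{L}$ induces
\[
\psi \coloneqq q_{*}\circ p^{*}\colon H^{1}(\ov{F},\Z_{\ell}(1))\to H^{3}(\ov{X},\Z_{\ell}(2)),
\]
whose image automatically lies in $\widetilde{N}^{1}H^{3}(\ov{X},\Z_{\ell}(2))$ by the definition of the strong coniveau filtration. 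The crucial step is to show that $\psi$ is surjective, together with the equality $N^{1}H^{3}(\ov{X},\Z_{\ell}(2))=H^{3}(\ov{X},\Z_{\ell}(2))$: the latter is a consequence of rational connectedness (every class has coniveau at least one), and the former is an $\ell$-adic integral incarnation of the Clemens--Griffiths/Tjurin isomorphism between the intermediate Jacobian of $X$ and $\on{Alb}(F)$. I would establish this via a Bloch--Srinivas-style decomposition of the diagonal $\Delta_{X}\in CH^{3}(X\times X)$ using the line-covering of $X$, upgraded from a rational to an integral statement via torsion-freeness of $H^{*}(\ov{X},\Z_{\ell})$ (clear in degrees $\ne 3$ by Lefschetz, and in degree $3$ via the smoothness of $F$ combined with an explicit integral Abel-Jacobi construction).

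Once (\ref{coniveau=strong}) is established, \cref{thm-csc}(1) provides the surjectivity of $CH^{2}(X)_{\alg}\otimes\Z_{\ell}\to H^{1}(\F,N^{1}H^{3}(\ov{X},\Z_{\ell}(2)))$; combined with the Colliot-Th\'el\`ene--Kahn exact sequence (using the Tate conjecture for the smooth surface $F$, which dominates $X$ via lines and which holds for the relevant surfaces), this forces $H^{3}_{\nr}(X,\Q_{\ell}/\Z_{\ell}(2))=0$. The main obstacle will be the integral surjectivity of $\psi$: over $\C$ it is immediate from Hodge theory, whereas over $\ov{\F}$ one must construct an integral $\ell$-adic Abel-Jacobi map from $F$ and control its cokernel, making essential use of the smoothness of $F$ and the explicit geometry of lines on Fano complete intersections.
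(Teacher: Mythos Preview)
Your overall architecture is right: one shows $H^{3}(\ov{X},\Z_{\ell}(2))=\widetilde{N}^{1}H^{3}(\ov{X},\Z_{\ell}(2))$ via the universal line, and then deduces the vanishing of $H^{3}_{\nr}$ from the Colliot-Th\'el\`ene--Kahn formalism. But two of your key steps are not solid as stated.

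\textbf{Integral surjectivity of $\psi$.} You correctly flag this as the main obstacle, but your proposed route --- a Bloch--Srinivas decomposition upgraded to integral coefficients via torsion-freeness of $H^{3}(\ov{X},\Z_{\ell})$, the latter to be extracted from an ``explicit integral Abel--Jacobi construction'' --- is circular or at least not a proof: you are trying to control $H^{3}$ integrally, and you cannot assume its torsion-freeness in the course of doing so. The paper avoids this entirely. Since $X$ is a complete intersection, both $X$ and its Fano scheme of lines (together with the universal line) lift to characteristic zero over a DVR. Over $\C$, Voisin \cite[Theorem~1.13]{voisin2020coniveau} proves the integral surjectivity of the cylinder map for smooth Fano complete intersection threefolds with smooth Fano scheme of lines; smooth proper base change then transports this surjectivity back to $\ov{\F}$. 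This lifting argument is short and requires no delicate integral Abel--Jacobi analysis in positive characteristic.

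\textbf{The endgame.} Your appeal to ``the Tate conjecture for the smooth surface $F$, which holds for the relevant surfaces'' is a genuine gap: the Tate conjecture for divisors on the Fano surface of a general smooth Fano complete intersection threefold is not known, and in any case information about $CH^{1}(F)$ and $H^{2}(\ov{F})$ does not directly yield what you need about $CH^{2}(X)\to H^{4}(\ov{X},\Z_{\ell}(2))^{G}$. The paper's argument is both simpler and unconditional. Once $H^{3}(\ov{X},\Z_{\ell}(2))=\widetilde{N}^{1}H^{3}(\ov{X},\Z_{\ell}(2))$ and $CH_{0}(\ov{X}_{\ov{K}})=\Z$ (rational chain connectedness) are known, \Cref{prop-strongmiddletate} reduces the vanishing of $H^{3}_{\nr}(X,\Q_{\ell}/\Z_{\ell}(2))$ to showing that the cycle map $CH^{2}(X)\otimes\Z_{\ell}\to H^{4}(\ov{X},\Z_{\ell}(2))^{G}$ has torsion-free cokernel. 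But by the Lefschetz hyperplane theorem $H^{4}(\ov{X},\Z_{\ell}(2))\simeq\Z_{\ell}$, generated by the class of a line; since $F(X)$ is geometrically connected \cite[Theorem~2.1]{debarre1998variete}, Lang--Weil gives a zero-cycle of degree $1$ on $F(X)$ over $\F$, hence a line (as a cycle) on $X$ over $\F$, and the cycle map is surjective. No Tate-type input is needed.
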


The Fano scheme of lines is smooth of the expected dimension if $X$ is a general smooth Fano complete intersection threefold \cite[Theorem 2.1]{debarre1998variete} or if $X$ is any smooth cubic threefold \cite[Corollary 1.12]{altman1977fano}. 

\medskip

We now discuss the main ingredients that go into the proofs of \Cref{thm-RCC} and \Cref{thm-Fano}. Consider the $\ell$-adic cycle maps
\begin{align}
  CH^2(X)\otimes_{\Z}\Z_{\ell}&\to H^{4}(X,\Z_{\ell}(2))\label{strong-tate},\\
  CH^2(X)\otimes_{\Z}\Z_{\ell}&\to H^{4}(\ov{X},\Z_{\ell}(2))^G\label{medium-tate}.
 \end{align}
By a theorem of Colliot-Th\'el\`ene and Kahn \cite[Theorem 2.2]{colliot2013cycles}, the torsion subgroup of the cokernel of (\ref{strong-tate}) is isomorphic to the quotient of
$H^{3}_{\nr}(X,\Q_{\ell}/\Z_{\ell}(2))$ by its maximal divisible subgroup. The maps (\ref{strong-tate}) and (\ref{medium-tate}) are related to each other by the following diagram, induced by the Hochschild-Serre spectral sequence:
\begin{equation}\label{hs-intro}
\adjustbox{scale=0.97,center}{
\begin{tikzcd}
0\arrow[r] &CH^2(X)_{\hom}\otimes \Z_\ell\arrow[d, "\cl_{AJ}"] \arrow[r]& CH^2(X)\otimes\Z_{\ell} \arrow[d, "(\ref{strong-tate})"] \arrow[dr, "(\ref{medium-tate})"]  \\
0 \arrow[r] & H^1(\F, H^{3}(\ov{X},\Z_{\ell}(2))) \arrow[r,"\iota"] & H^{4}(X,\Z_{\ell}(2)) \arrow[r] &   H^{4}(\ov{X},\Z_{\ell}(2))^G\arrow[r] & 0.
\end{tikzcd}
}
\end{equation}
Thus the surjectivity of the map (\ref{strong-tate}) is equivalent to the surjectivity of the two maps (\ref{medium-tate}) and $\cl_{AJ}$. 

An important observation is that, 
if $H^{3}(\ov{X},\Z_{\ell}(2))=\widetilde{N}^{1}H^{3}(\ov{X},\Z_{\ell}(2))$, then $\cl_{AJ}$ is surjective. Based on this, we deduce from results of Voisin on the coniveau filtrations of rationally connected complex threefolds \cite{voisin2006integral,voisin2020coniveau} and a spreading out argument that for $X$ as in the statement of \Cref{thm-RCC} the maps (\ref{medium-tate}) and $\cl_{AJ}$ are surjective. Therefore (\ref{strong-tate}) is surjective, and a standard argument using the aforementioned \cite[Theorem 2.2]{colliot2013cycles} and the fact that $CH_0(\ov{X}_{\ov{K}})=\Z$, where $K$ is the function field of $\ov{X}$, 
allows us to conclude $H^{3}_{\nr}(X,\Q_{\ell}/\Z_{\ell}(2))=0$.

We conclude by proposing the following question.

\begin{q}\label{uniruled}
For a smooth projective geometrically uniruled threefold $X$ over a finite field $\F$, 
do we have $H^3(\ov{X},\Z_\ell(2))=\widetilde{N}^{1}H^{3}(\ov{X},\Z_\ell(2))$?
\end{q}

Note that the decomposition of the diagonal yields the identity $H^3(\ov{X},\Z_\ell(2))=N^1H^3(\ov{X},\Z_\ell(2))$ for all geometrically uniruled threefolds $X$ over a finite field $\F$.

\medskip

This paper is organized as follows.
In \Cref{sec5}, we introduce the $\ell$-adic coniveau and strong coniveau filtrations and discuss their basic properties. 
In \Cref{sec9}, we prove a relative Wu's theorem for \'etale Stiefel-Whitney classes and \cref{benoist-ottem-4.3}.
In \Cref{section-WAJconstruction}, we construct the map $\cl_a$. More generally, for every $i\geq 0$ we construct an $\ell$-adic Walker Abel-Jacobi map with domain $CH^i(X)_{\on{alg}}\otimes\Z_{\ell}$; the map $\cl_a$ corresponds to the case $i=2$. In \Cref{section-WAJfinitefield}, we prove basic properties of the $\ell$-adic Walker Abel-Jacobi map for varieties over finite fields, and then study $\cl_a$ in detail.
In \Cref{sec8}, we prove Theorems \ref{thm-csc}, \ref{thm-RCC}, and \ref{thm-Fano}.

\subsection{Additional note on this paper}
The results of this paper originally appeared on arXiv in June 2022 and have since been used by Tian \cite{tian2022localglobal} to prove the surjectivity of the map (\ref{strong-tate}) for fibrations into rational surfaces over curves defined over finite fields.

\subsection{Notation}
Let $k$ be a field. We denote by $\ov{k}$ a separable closure of $k$ and by $G:=\Gal(\ov{k}/k)$ the absolute Galois group of $k$. For a continuous $G$-module $M$,  we denote by $H^i(k, M):=H^i(G,M)$ the continuous Galois cohomology of $M$.
 When $k$ is a finite field, we denote it by $\F$.

  If $X$ is a $k$-scheme, we define $\ov{X}:=X\times_k \ov{k}$. A $k$-variety is a separated $k$-scheme of finite type. For a smooth $k$-variety $X$ of pure dimension and an integer $i\geq 0$, we denote by $CH^i(X)$ the Chow group of codimension $i$ cycles on $X$ modulo rational equivalence.
If $\ell$ is a prime invertible in $k$, the notation $H^j(X,\Z_\ell(m))$ will mean $\varprojlim_n H^i(X,\mu_{\ell^n}^{\otimes m})$. (We will only use it when $k$ is finite or separably closed.)
If the smooth $k$-variety is also projective and $k$ is separably closed, we denote $H_j(X,\Z_\ell(m)):=H^{2\dim (X)-2j}(X,\Z_\ell(\dim (X)-m))$.
We write $H^j(X,\Q_{\ell}/\Z_\ell(m))\simeq \varinjlim H^j(X,\mu_{\ell^n}^{\otimes m})$ for the (continuous) \'etale cohomology of the sheaf $\Q_{\ell}/\Z_\ell(m)$. 
We let $H^i_{\nr}(X,\Q_\ell/\Z_\ell(m))$ be the unramified cohomology group.

For an abelian group $A$, an integer $n\geq 1$, and a prime number $\ell$, we denote $A[n]:=\left\{a\in A\mid na=0\right\}$, $A\{\ell\}$ the subgroup of $\ell$-primary torsion elements of $A$, 
$A_{\tors}$ the subgroup of torsion elements of $A$, and $A_{\tf}:= A/A_{\tors}$.

\section{Coniveau and strong coniveau}\label{sec5}

\subsection{Two coniveau filtrations}\label{subsq-twoconiveau}
Let $k$ be a separably closed field, $\ell$ be a prime number invertible in $k$, and $X$ be a smooth projective connected $k$-variety of dimension $d$. The classical {\it coniveau} filtration is defined by
\begin{align}\label{cdef}
N^{c} H^{j}(X,\Z_\ell(m)) &:=\sum_{Z\subset X} \Image\left(H^j_{Z}(X,\Z_\ell(m))\rightarrow H^j(X,\Z_\ell(m))\right)\\ 
&=\sum_{Z\subset X} \Ker\left(H^{j}(X,\Z_\ell(m))\to H^{j}(X-Z,\Z_\ell(m))\right)\nonumber,
\end{align}
where $Z\subset X$ runs over the closed subvarieties of codimension $\geq c$.
Similarly, the {\it strong coniveau} filtration is defined by
\begin{align}\label{scdef}
\widetilde{N}^{c} H^{j}(X,\Z_\ell(m)) := \sum_{f\colon T\to X} \Image\left(f_* \colon H^{j-2r}(T,\Z_\ell(m-r))\rightarrow H^{j}(X,\Z_\ell(m))\right),
\end{align}
where the sum is over all smooth projective connected $k$-varieties  $T$ of dimension $d-r$ with $r\geq c$ and $f\colon T\to X$ is a morphism;
in fact, we may restrict to morphisms $f\colon T\rightarrow X$ where $\dim T=d-c$ by taking the product with a projective space of suitable dimension.

To establish the basic properties of the two coniveau filtrations in the $\ell$-adic setting, the existence of prime-to-$\ell$ variants of de Jong's alterations, due to Gabber, will be crucial.

\begin{thm}\label{gabber-alterations}
Let $Z$ be a projective irreducible $k$-variety. 
Then there exists a smooth projective connected $k$-variety $\widetilde{Z}$ and a generically finite morphism $g\colon \widetilde{Z}\rightarrow Z$ such that $\deg(g)$ is prime to $\ell$.
\end{thm}
\begin{proof}
This is a special case of a theorem of Gabber \cite[Theorem 2.1]{illusie2014gabber}.
\end{proof}

\begin{lem}\label{lem-sc_corr}
We have
\begin{align}\label{eq0}
\widetilde{N}^{c} H^{j}(X,\Z_\ell(m)) = \sum_{(T,\Gamma)} \Image\left(\Gamma_* \colon H^{j-2r}(T,\Z_\ell(m-r))\rightarrow H^{j}(X,\Z_\ell(m))\right),
\end{align}
where $(T,\Gamma)$ runs over all smooth projective connected $k$-varieties $T$ of dimension $e-r$ and $\Gamma\in CH^e(T\times X)$ 
for all $e\geq r\geq c$.
\end{lem}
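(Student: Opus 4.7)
My plan is to prove the two inclusions separately; the nontrivial direction uses Gabber's prime-to-$\ell$ alterations (\cref{gabber-alterations}) to replace an arbitrary irreducible correspondence by a smooth one, losing only a unit of $\Z_{\ell}$ in the process.

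For the inclusion $\supseteq$ in (\ref{eq0}), given a morphism $f\colon T\to X$ from a smooth projective connected $k$-variety of dimension $d-r$ with $r\geq c$, I take $e:=d$ and consider the graph $\Gamma_{f}\in CH^{d}(T\times X)$ of $f$, so that $\dim T=e-r$ and $e\geq r\geq c$. Identifying $T$ with $\Gamma_{f}$ via the closed embedding $(\id,f)\colon T\to T\times X$ and applying the projection formula yields $(\Gamma_{f})_{*}=f_{*}$, so the image of $f_{*}$ is captured by the right-hand side of (\ref{eq0}).

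For the inclusion $\subseteq$, by linearity of $\Gamma_{*}$ in $\Gamma$ I may assume $\Gamma=[Z]$ for an irreducible closed subvariety $Z\subset T\times X$ of codimension $e$, so $\dim Z=d-r$. I apply \cref{gabber-alterations} to $Z$ to obtain a smooth projective connected $k$-variety $\widetilde{Z}$ of dimension $d-r$ together with a generically finite morphism $g\colon\widetilde{Z}\to Z$ of degree $n$ prime to $\ell$. Let $\widetilde{g}\colon\widetilde{Z}\to T\times X$ be the composition of $g$ with the closed embedding $Z\hookrightarrow T\times X$, and set $\widetilde{f}:=p_{T}\circ\widetilde{g}$ and $f:=p_{X}\circ\widetilde{g}$. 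Since $\widetilde{g}_{*}(1)=n\cdot[Z]$ in $H^{2e}(T\times X,\Z_{\ell}(e))$ and $n\in\Z_{\ell}^{\times}$, the projection formula gives, for any $\alpha\in H^{j-2r}(T,\Z_{\ell}(m-r))$,
\[
\Gamma_{*}\alpha=(p_{X})_{*}\bigl(p_{T}^{*}\alpha\cup[Z]\bigr)=n^{-1}(p_{X})_{*}\widetilde{g}_{*}\bigl(\widetilde{g}^{*}p_{T}^{*}\alpha\bigr)=n^{-1}f_{*}\bigl(\widetilde{f}^{*}\alpha\bigr),
\]
which lies in $\widetilde{N}^{c}H^{j}(X,\Z_{\ell}(m))$ by the definition (\ref{scdef}), since $f\colon\widetilde{Z}\to X$ is a morphism from a smooth projective connected variety of dimension $d-r$.

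The one subtle ingredient is the need to invert $n=\deg g$ in $\Z_{\ell}$, which is precisely what the prime-to-$\ell$ refinement of de Jong's alterations due to Gabber (\cref{gabber-alterations}) provides; de Jong's alterations alone would not suffice. Everything else is a routine application of the projection formula and functoriality of pushforwards in $\ell$-adic cohomology.
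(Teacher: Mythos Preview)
Your proof is correct and follows essentially the same route as the paper's: graphs of morphisms for the easy inclusion, and Gabber's $\ell'$-alterations together with the projection formula for the other. One cosmetic slip: your labels for the two inclusions are swapped relative to the usual convention---the graph argument establishes $\widetilde{N}^{c}\subseteq\text{RHS}$ in (\ref{eq0}), while the alteration argument establishes $\widetilde{N}^{c}\supseteq\text{RHS}$.
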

\begin{proof}

If $f\colon T\rightarrow X$ is a morphism of smooth projective varieties and $\Gamma\subset T\times X$ is the graph of $f$, then $f_*=\Gamma_*$. Therefore the left side of (\ref{eq0}) is contained in the right side of (\ref{eq0}).

Conversely, let $(T, \Gamma)$ be a pair as in (\ref{eq0}). In particular, $\dim \Gamma=d-r$. Write $\Gamma=\sum_i n_i\Gamma_i$, where $\Gamma_i$ are the irreducible components of $\text{Supp}(\Gamma)$.
For each $i$, let $g_i\colon \widetilde{\Gamma_i}\rightarrow \Gamma_i$ be an $\ell'$-alteration, which exists 
by \cref{gabber-alterations}; set $m_i=\deg(g_i)$, which is prime to $\ell$.
If we write $p\colon T\times X\rightarrow T$ and  $q\colon T\times X\rightarrow X$ for the projections, then, for every $\alpha\in H^{j-2r}(T,\Z_\ell(m-c))$, we have
\[
\Gamma_*\alpha=q_*(p^*\alpha\cdot\Gamma)=\sum_i n_i q_*(p^*\alpha\cdot\Gamma_i)=\sum_i \frac{n_i}{m_i}q_*(g_i)_*(g_i)^*p^*\alpha,
\]
which shows $\Gamma_*\alpha\in\widetilde{N}^{c}H^{j}(X,\Z_\ell(m))$. 
The result follows.
\end{proof}

\begin{lem}\label{lem-csccorr}
The coniveau and strong coniveau filtrations are respected by correspondences. That is, for a smooth projective connected $k$-variety $Y$ and $\Gamma\in CH^i(X\times Y)$, the action $\Gamma_*$ of $\Gamma$ on $H^j(X,\Z_\ell(m))$ induces homomorphisms:
\begin{align*}
&\Gamma_*\colon N^cH^j(X,\Z_\ell(m))\rightarrow N^{c+i-d}H^{j+2(i-d)}(Y,\Z_\ell(m+i-d))\\
&\Gamma_*\colon \widetilde{N}^c H^j(X,\Z_\ell(m))\rightarrow \widetilde{N}^{c+i-d}H^{j+2(i-d)}(Y,\Z_\ell(m+i-d)).
\end{align*}
\end{lem}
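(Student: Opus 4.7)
The plan is to treat the two filtrations separately, in both cases starting from the formula $\Gamma_{*}\alpha = q_{*}(p^{*}\alpha \cdot [\Gamma])$, where $p$ and $q$ denote the projections from $X \times Y$ to $X$ and $Y$.

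For $\widetilde{N}^{c}$, I will invoke the alternate description from \cref{lem-sc_corr}: any class in $\widetilde{N}^{c}H^{j}(X,\Z_{\ell}(m))$ is of the form $\Gamma'_{*}\beta$, for some smooth projective connected $T$ of dimension $e'-r$, some $\Gamma' \in CH^{e'}(T \times X)$ with $r \geq c$, and some $\beta \in H^{j-2r}(T,\Z_{\ell}(m-r))$. The standard composition identity $\Gamma_{*}\Gamma'_{*} = (\Gamma \circ \Gamma')_{*}$ then yields $\Gamma_{*}\alpha = (\Gamma \circ \Gamma')_{*}\beta$ with $\Gamma \circ \Gamma' \in CH^{e'+i-d}(T \times Y)$. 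Setting $\tilde e = e'+i-d$ and $\tilde r = r+i-d$, one has $\dim T = \tilde e - \tilde r$ and $\tilde r \geq c+i-d$, and a routine bookkeeping of cohomological degrees and twists places $(\Gamma \circ \Gamma')_{*}\beta$ in $\widetilde{N}^{c+i-d}H^{j+2(i-d)}(Y,\Z_{\ell}(m+i-d))$ via \cref{lem-sc_corr}.

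For $N^{c}$, let $\alpha \in N^{c}H^{j}(X,\Z_{\ell}(m))$ lift to $\tilde\alpha \in H^{j}_{Z}(X,\Z_{\ell}(m))$ for some closed $Z \subset X$ of codimension $\geq c$. By Chow's moving lemma on the smooth projective variety $X \times Y$, I will replace $\Gamma$ by a rationally equivalent cycle $\Gamma'$ such that $|\Gamma'|$ meets $Z \times Y$ properly, so that $(Z \times Y) \cap |\Gamma'|$ has codimension $c+i$ in $X \times Y$; this substitution does not alter the induced action on cohomology. The cup product with supports
\[
H^{j}_{Z \times Y}(X \times Y,\Z_{\ell}(m)) \otimes H^{2i}_{|\Gamma'|}(X \times Y,\Z_{\ell}(i)) \to H^{j+2i}_{(Z \times Y) \cap |\Gamma'|}(X \times Y,\Z_{\ell}(m+i))
\]
applied to $p^{*}\tilde\alpha \otimes [\Gamma']$, and followed by the Gysin pushforward $q_{*}$, produces a class in $H^{j+2i-2d}_{W}(Y,\Z_{\ell}(m+i-d))$ with $W := q\bigl((Z \times Y) \cap |\Gamma'|\bigr)$. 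Since $\dim W \leq d + \dim Y - c - i$ by the proper intersection, $W$ has codimension $\geq c+i-d$ in $Y$, and the desired membership $\Gamma_{*}\alpha \in N^{c+i-d}H^{j+2(i-d)}(Y,\Z_{\ell}(m+i-d))$ follows.

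The main obstacle lies in the coniveau case: controlling the codimension of the support after cup product requires proper intersection of $|\Gamma|$ with $Z \times Y$, which a priori may fail. Chow's moving lemma on $X \times Y$ supplies the required cycle representative, and with it the whole argument is dimension bookkeeping. The strong coniveau case is essentially formal once \cref{lem-sc_corr} is in hand.
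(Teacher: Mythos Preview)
Your proposal is correct and follows essentially the same approach as the paper: for $N^c$ you use the moving lemma to achieve proper intersection with $Z\times Y$ and then bound the codimension of the projected support, and for $\widetilde{N}^c$ you invoke \cref{lem-sc_corr} together with composition of correspondences. The only cosmetic slip is that a general class in $\widetilde{N}^c$ is a \emph{sum} of terms $\Gamma'_*\beta$ rather than a single one, but linearity handles this immediately.
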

\begin{proof}
To show that $\Gamma_*$ respects the coniveau filtration, we first choose a closed subvariety $Z\subset X$ of codimension $\geq c$ such that the homomorphism $H^j_{Z}(X,\Z_\ell(m))\to N^cH^j(X,\Z_\ell(m))$ is surjective.
We then apply the moving lemma \cite[Tag 0B0D]{stacks-project} to $\Gamma$ to ensure that it intersects properly with $Z\times Y$.
Now the assertion follows from the fact that the image of $\Gamma_*\colon N^cH^j(X,\Z_\ell(m))\rightarrow H^{j+2(i-d)}(Y, \Z_\ell(m+i-d))$ is contained in the image of $H^{j+2(i-d)}_W(Y,\Z_\ell(m+i-d))\rightarrow H^{j+2(i-d)}(Y,\Z_\ell(m+i-d))$, where $W\subset Y$ is the image of $\text{Supp}(\Gamma)\cap (Z\times Y)$ by the projection $X\times Y\rightarrow Y$. The assertion for the strong coniveau filtration follows from \cref{lem-sc_corr} using the composition of correspondences.
\end{proof}

We give another description of the deepest part of the strong coniveau filtration in odd degree.

\begin{lem}\label{lem-cylinder} 
For all $i\geq 1$,
we have
\begin{equation}\label{eq1}
\widetilde{N}^{i-1} H^{2i-1}(X,\Z_\ell(i)) = \sum_{(S,\Gamma)} \Image\left(\Gamma_* \colon H_1(S,\Z_\ell)\rightarrow H^{2i-1}(X,\Z_\ell(i))\right),
\end{equation}
where $(S,\Gamma)$ runs over all pairs, where $S$ is a smooth projective connected $k$-variety and $\Gamma\in CH^i(S\times X)$.
Moreover, in (\ref{eq1}) we may restrict  to pairs $(S,\Gamma)$ where $\dim S=1$.
\end{lem}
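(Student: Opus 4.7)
The plan is to prove (\ref{eq1}) by double inclusion, then observe that the sum stabilizes once we restrict to $\dim S = 1$. The inclusion of curve pushforwards into the LHS is immediate from \cref{lem-sc_corr} applied with $T = S$, $e = i$, $r = i-1$: this identifies the source $H^{j-2r}(T, \Z_\ell(m-r)) = H^1(S, \Z_\ell(1))$ with $H_1(S, \Z_\ell)$ on a curve.

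The key direction is LHS $\subseteq$ (sum over curves). First I would unravel \cref{lem-sc_corr} applied with $c = i-1$, $j = 2i-1$, $m = i$: the non-negativity of the source cohomological degree $2i-1-2r$ forces $r = i-1$, so $\widetilde{N}^{i-1}H^{2i-1}(X, \Z_\ell(i))$ is the sum of images $\Gamma_*(H^1(T, \Z_\ell(1)))$ over pairs $(T,\Gamma)$ with $T$ smooth projective connected of some dimension $t \geq 1$ and $\Gamma \in CH^{t+i-1}(T \times X)$. For $t = 1$ this is already a curve cylinder. For $t \geq 2$, I would reduce $T$ to a curve via the Poincar\'e bundle: let $A := \Pic^0(T)_{\mathrm{red}}$, and by Bertini pick a smooth complete intersection curve $\iota \colon C \hookrightarrow A$ cut out by sufficiently many ample divisors on $A$; set $\Delta := c_1((\iota \times \id_T)^* \mathcal{P}) \in CH^1(C \times T)$, where $\mathcal{P}$ is the Poincar\'e bundle on $A \times T$. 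Under the Kummer identifications $H^1(C, \Z_\ell(1)) = T_\ell \on{Jac}(C)$ and $H^1(T, \Z_\ell(1)) = T_\ell A$, the map $\Delta_*$ coincides with the Tate module of the morphism $\on{Jac}(C) \to A$ induced by $\iota$ via the Albanese universal property. This morphism is surjective by a Lefschetz-type argument for complete intersections of ample divisors in an abelian variety, hence $\Delta_*$ is surjective. Writing $\alpha = \Delta_*\beta$, we obtain $\Gamma_*\alpha = (\Gamma \circ \Delta)_*\beta$ with $\Gamma \circ \Delta \in CH^{(t+i-1)+1-t}(C \times X) = CH^i(C \times X)$, of the required curve form.

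Finally, to verify that contributions from $(S, \Gamma)$ with $\dim S = s \geq 2$ are already captured by the LHS, I would reduce the $H^{2s-1}$-source to the $H^1$-source via a projection-formula trick: for an ample divisor class $L \in CH^1(S)$, the correspondence $\Gamma' := p^*L^{s-1} \cdot \Gamma \in CH^{s+i-1}(S \times X)$ satisfies $\Gamma'_*\alpha' = \Gamma_*(L^{s-1}\alpha')$ for $\alpha' \in H^1(S, \Z_\ell(1))$, and $\Gamma'$ matches the form analyzed in the previous paragraph (with $T = S$), so its image lies in the LHS. By hard Lefschetz, $L^{s-1}$ surjects $H^1(S, \Q_\ell(1))$ onto $H^{2s-1}(S, \Q_\ell(s))$, which handles the torsion-free part. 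The main technical subtlety I anticipate is the $\ell$-torsion in $H^{2s-1}(S, \Z_\ell(s))$ outside the image of $L^{s-1}$; I expect these classes can be handled by an $\ell'$-alteration argument of the Gabber type used in \cref{lem-sc_corr}, reducing to a setting where integral hard Lefschetz effectively applies.
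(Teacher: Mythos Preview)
Your steps 1 and 2 are correct and close in spirit to the paper's proof: the paper also invokes \cref{lem-sc_corr} to rewrite the LHS as a sum of images $\Gamma_*(H^1(T,\Z_\ell(1)))$, and then uses the Poincar\'e bundle on $\Pic^0_{T/k}\times T$ to translate this into the $H_1$-form. The only organizational difference is that the paper uses $S=\Pic^0_{T/k}$ to prove $\text{LHS}\subseteq\text{RHS}$ first, and then reduces to curves afterwards, whereas you fold the curve reduction (via Bertini inside $\Pic^0(T)$ and weak Lefschetz) into the same step.

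The genuine gap is your step 3. You try to show that contributions from $(S,\Gamma)$ with $s=\dim S\geq 2$ land in the LHS by pushing $H_1(S,\Z_\ell)=H^{2s-1}(S,\Z_\ell(s))$ back to $H^1(S,\Z_\ell(1))$ through the hard Lefschetz map $L^{s-1}$. But integral hard Lefschetz fails: the map $L^{s-1}$ is an isomorphism only after $\otimes\,\Q_\ell$, and its integral cokernel is in general nonzero---not just the torsion of $H^{2s-1}$, since the image may also sit as a proper-index sublattice of the torsion-free part. There is no mechanism by which $\ell'$-alterations repair this; alterations do not improve integral hard Lefschetz.

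The fix is much simpler, and it is exactly what the paper does: use \emph{weak} Lefschetz directly on $S$. Choose a smooth complete intersection curve $\iota\colon C\hookrightarrow S$; then $\iota_*\colon H_1(C,\Z_\ell)\twoheadrightarrow H_1(S,\Z_\ell)$ is surjective integrally by the Lefschetz hyperplane theorem, and hence
\[
\Gamma_*\bigl(H_1(S,\Z_\ell)\bigr)=\bigl((\iota\times\id_X)^*\Gamma\bigr)_*\bigl(H_1(C,\Z_\ell)\bigr),
\]
with $(\iota\times\id_X)^*\Gamma\in CH^i(C\times X)$ a curve pair, so this lies in (curves) $=$ LHS by your step 1. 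This is the very same weak Lefschetz input you already used in step 2 for the curve $C\subset \Pic^0(T)$; apply it once more to $S$ itself and discard the hard Lefschetz detour.
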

\begin{proof}
Analogous to \cite[Lemma 1.2, Proposition 1.3]{voisin2020coniveau}. \cref{lem-sc_corr} yields
\[
\widetilde{N}^{i-1} H^{2i-1}(X,\Z_\ell(i)) = \sum_{(T,\Gamma)} \Image\left(\Gamma_* \colon H^{1}(T,\Z_\ell(1))\rightarrow H^{2i-1}(X,\Z_\ell(i))\right),
\]
where $(T,\Gamma)$ runs over all smooth projective connected $k$-varieties $T$ of dimension $d-i+1$ and $\Gamma\in CH^d(T\times X)$.
For any such pair $(T,\Gamma)$, the Poincar\'e line bundle $\mathcal{P}\in CH^1(\Pic^0_{T/k}\times T)$ gives an isomorphism:
\[
\mathcal{P}_*\colon H_1(\Pic^0_{T/k},\Z_\ell)\xrightarrow{\sim}H^1(T,\Z_\ell(1)).
\]
Now the composition $\Gamma\circ\mathcal{P}\in CH^{i}(\Pic^0_{T/k}\times X)$ gives a homomorphism
\[
(\Gamma\circ\mathcal{P})_* = \Gamma_*\circ \mathcal{P}_*\colon H_1(\Pic^0_{T/k},\Z_\ell)\rightarrow H^{2i-1}(X,\Z_\ell(i))
\]
whose image coincides with that of $\Gamma_*$.
This shows that the left term of (\ref{eq1}) is contained in the right term of (\ref{eq1}).

It remains to show the opposite inclusion.
One may first reduce to the case where $\dim S=1$ by the Lefschetz hyperplane section theorem \cite[Theorem VI.7.1]{milne1980etale}.
Then the assertion follows by taking $\ell'$-alterations (\cref{gabber-alterations}) of the components of $\Gamma \in CH^i(S\times X)$ for a smooth projective connected $k$-curve $S$, as in the proof of \cref{lem-sc_corr}.
The proof is complete.
\end{proof}

\subsection{Comparison of coniveau and strong coniveau}\label{subsq-comparison}

We maintain the notation of \Cref{subsq-twoconiveau}. Here we extend some of the results over the complex numbers shown in \cite[Section 2.2]{benoist2021coniveau} to an arbitrary separably closed field. The results below only concern the two coniveau filtrations on the integral cohomology $H^j(X,\Z_\ell)$ as groups.
Since the twist is irrelevant here, we omit it.

\begin{lem}\label{lem-csc}
For all $j,c\geq 0$, the group $N^{c}H^{j}(X,\Z_\ell)/\widetilde{N}^{c}H^{j}(X,\Z_\ell)$ is finite.
\end{lem}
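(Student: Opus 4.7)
The plan is to reduce the finiteness statement to an equality of $\Q_\ell$-subspaces and then prove the latter by induction on dimension, using Gabber's $\ell'$-alterations (\Cref{gabber-alterations}) as a substitute for resolution of singularities. Since $H^j(X,\Z_\ell)$ is a finitely generated $\Z_\ell$-module, so is $M := N^c H^j(X,\Z_\ell)/\widetilde{N}^c H^j(X,\Z_\ell)$; such a module is finite if and only if it is torsion, equivalently $M \otimes_{\Z_\ell}\Q_\ell = 0$. Using the flatness of $\Q_\ell$ over $\Z_\ell$ and the identifications $H^j_Z(X,\Z_\ell)\otimes\Q_\ell \cong H^j_Z(X,\Q_\ell)$ and $H^{j-2r}(T,\Z_\ell(-r))\otimes\Q_\ell \cong H^{j-2r}(T,\Q_\ell(-r))$, I identify the $\Z_\ell$-subspaces $N^c H^j(X,\Z_\ell)$ and $\widetilde{N}^c H^j(X,\Z_\ell)$, upon tensoring with $\Q_\ell$, with their $\Q_\ell$-coefficient analogs $N^c H^j(X,\Q_\ell)$ and $\widetilde{N}^c H^j(X,\Q_\ell)$. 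Thus the lemma reduces to proving $N^c H^j(X,\Q_\ell) = \widetilde{N}^c H^j(X,\Q_\ell)$.

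The inclusion $\widetilde{N}^c \subseteq N^c$ is immediate, so I focus on the reverse containment over $\Q_\ell$. By the definition of $N^c$, it is enough to prove, by induction on $\dim Z$, that for every closed irreducible $Z \subseteq X$ of codimension $r \geq c$, the image of $H^j_Z(X,\Q_\ell) \to H^j(X,\Q_\ell)$ lies in $\widetilde{N}^c H^j(X,\Q_\ell)$. When $Z$ is smooth (in particular when $\dim Z = 0$), the base case follows from absolute cohomological purity: $H^j_Z(X,\Q_\ell) \cong H^{j-2r}(Z,\Q_\ell(-r))$, and the image in $H^j(X,\Q_\ell)$ is the Gysin pushforward $i_*(H^{j-2r}(Z,\Q_\ell(-r)))$ along $i \colon Z \hookrightarrow X$, hence in $\widetilde{N}^r \subseteq \widetilde{N}^c$. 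For the inductive step with $Z$ singular, I apply \Cref{gabber-alterations} to obtain a smooth projective connected $\tilde{Z}$ of dimension $\dim Z$ with a generically finite morphism $g \colon \tilde{Z} \to Z$ of degree $m$ prime to $\ell$, choose a dense open $U \subseteq Z_{\mathrm{reg}}$ over which $g|_{\tilde{U}} \colon \tilde{U} := g^{-1}(U) \to U$ is finite étale of degree $m$, set $V := Z \setminus U$ (of dimension strictly less than $\dim Z$), and write $\tilde{f} := i \circ g \colon \tilde{Z} \to X$. Given $\alpha \in H^j_Z(X,\Q_\ell)$ with image $\bar{\alpha} \in H^j(X,\Q_\ell)$, the localization sequence for $V \subseteq X$ together with purity along the smooth codimension-$r$ closed subvariety $U \subseteq X \setminus V$ express $\bar{\alpha}|_{X \setminus V} = (i|_U)_*(\eta)$ for some $\eta \in H^{j-2r}(U,\Q_\ell(-r))$. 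The identity $(g|_{\tilde{U}})_*(g|_{\tilde{U}})^* = m$ for the finite étale cover $g|_{\tilde{U}}$, combined with the invertibility of $m$ in $\Q_\ell$, then rewrites $\eta = (g|_{\tilde{U}})_*(\tilde{\eta})$ with $\tilde{\eta} := \tfrac{1}{m}(g|_{\tilde{U}})^*(\eta) \in H^{j-2r}(\tilde{U},\Q_\ell(-r))$.

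The hard part will be lifting $\tilde{\eta}$ to a class $\tilde{\beta} \in H^{j-2r}(\tilde{Z},\Q_\ell(-r))$ along the open restriction $H^{j-2r}(\tilde{Z},\Q_\ell(-r)) \to H^{j-2r}(\tilde{U},\Q_\ell(-r))$, because the obstruction a priori sits in the possibly nonzero group $H^{j-2r+1}_{\tilde{V}}(\tilde{Z},\Q_\ell(-r))$ with $\tilde{V} := g^{-1}(V)$. To handle this, I plan to invoke the refinement of Gabber's alteration theorem from \cite{illusie2014gabber} so that $\tilde{V}$ may be arranged to be a strict normal crossings divisor in $\tilde{Z}$; a Mayer--Vietoris devissage then expresses the obstruction through cohomologies of smooth closed strata of $\tilde{V}$ of dimension strictly less than $\dim Z$, whose images in $H^j(X,\Q_\ell)$ lie in $\widetilde{N}^c$ by the inductive hypothesis. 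Once $\tilde{\eta}$ has been lifted to a suitable $\tilde{\beta}$ modulo such corrections, proper base change along $X \setminus V \hookrightarrow X$ yields $\tilde{f}_*(\tilde{\beta})|_{X \setminus V} = \bar{\alpha}|_{X \setminus V}$, so $\bar{\alpha} - \tilde{f}_*(\tilde{\beta})$ belongs to the image of $H^j_V(X,\Q_\ell) \to H^j(X,\Q_\ell)$, which is in $\widetilde{N}^c$ by induction applied to the irreducible components of $V$. Combined with $\tilde{f}_*(\tilde{\beta}) \in \widetilde{N}^r \subseteq \widetilde{N}^c$, this places $\bar{\alpha}$ in $\widetilde{N}^c H^j(X,\Q_\ell)$ and closes the induction.
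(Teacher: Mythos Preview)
Your reduction to $\Q_\ell$-coefficients and the inductive setup are sound, but the ``hard part'' contains a genuine gap. The obstruction to extending $\tilde{\eta}$ from $\tilde{U}$ to $\tilde{Z}$ lives in $H^{j-2r+1}_{\tilde{V}}(\tilde{Z},\Q_\ell(-r))$, and your argument supplies no mechanism to kill or absorb it. Your proposed fix---expressing the obstruction via smooth strata of the SNC divisor $\tilde{V}$ and invoking the inductive hypothesis---does not connect the two sides: the inductive hypothesis controls images of $H^j_W(X)\to H^j(X)$ for lower-dimensional $W\subset X$, whereas the obstruction sits on $\tilde{Z}$. Modifying $\bar{\alpha}$ by a class in the image of $H^j_V(X)\to H^j(X)$ restricts to zero on $X\setminus V$ and so leaves $\eta$ and $\tilde{\eta}$ unchanged; and for a smooth stratum $D\subset\tilde{V}$, purity identifies the relevant piece of the obstruction with a class in $H^{j-2r-1}(D)$, whose Gysin pushforward along $D\to X$ (a map from a variety of dimension $\leq d-r-1$) lands in $H^{j+1}(X)$, not $H^j(X)$. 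Thus nothing in the induction singles out your particular $\tilde{\eta}$ as liftable, and indeed the restriction $H^{j-2r}(\tilde{Z})\to H^{j-2r}(\tilde{U})$ is already non-surjective for $\tilde{Z}$ a curve and $\tilde{V}$ a point.

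What is missing is exactly the input the paper uses: a weight argument \`a la \cite[Corollaire 8.2.8]{deligne1974theorie3}. Since $X$ is smooth proper, $\bar{\alpha}\in H^j(X,\Q_\ell)$ is pure of weight $j$; tracking this through purity and pullback forces $\tilde{\eta}$ into the lowest-weight piece $W_{j-2r}H^{j-2r}(\tilde{U},\Q_\ell)$. Because $\tilde{Z}$ is a smooth proper compactification of $\tilde{U}$, that weight piece is precisely the image of $H^{j-2r}(\tilde{Z},\Q_\ell)$, so $\tilde{\eta}$ does lift---but only once weights are invoked. (Over a general separably closed $k$ one obtains the weights by spreading out and specializing to $\overline{\F}_q$.) In short, your induction reproduces the skeleton of Deligne's proof but omits the one step, purity of weights on smooth proper varieties, that actually makes the lift go through.
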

\begin{proof}
It suffices to prove the equality
\[
N^{c}H^{j}(X,\Q_\ell)=\widetilde{N}^{c}H^{j}(X,\Q_\ell).
\]
 In order to prove this, it is in turn enough to prove that, if $Y$ is a smooth projective connected $k$-variety, $\iota\colon Z\hookrightarrow Y$ is an equidimensional closed subscheme,
 $\pi\colon \tilde{Z}\to Z$ is an alteration, and $q:=\iota\circ \pi$, then the sequence
\[H^*(\tilde{Z},\Q_{\ell})\xrightarrow{q_*} H^*(Y,\Q_{\ell})\to H^*(Y\setminus Z,\Q_{\ell})\]
is exact. This follows from a weight argument, an $\ell$-adic analogue of the Hodge-theoretic proof of \cite[Corollaire 8.2.8]{deligne1974theorie3}. We give the details below. 

Using Poincar\'e duality, we reduce to showing that
\[
\Ker(\iota^*\colon H^*(Y,\Q_\ell)\rightarrow H^*(Z,\Q_\ell))=\Ker(q^*\colon H^*(Y,\Q_\ell)\rightarrow H^*(\widetilde{Z},\Q_\ell)).
\]
By the rigidity property of \'etale cohomology,
we may assume that the base field is an algebraic closure of a field of finite type over the prime field.
A spreading-out argument combined with specialization maps for proper (not necessarily smooth) morphisms of schemes \cite[0GJ2, 0DDF]{stacks-project} then reduces the proof to the case where the base field is an algebraic closure of a finite field.
In this case, we use Frobenious weights; see \cite{deligne1980weil} for the definition and basic properties.
By \cite[Theorem 1.6]{deligne1974weil}, 
$H^n(Y,\Q_\ell)$ is pure of weight $n$.
Using the exactness of the functor $\text{Gr}^W_n$,
it remains to show that 
the induced map
\[
\text{Gr}_n^W(\pi^*)\colon \text{Gr}_n^W(H^n(Z,\Q_\ell))\rightarrow \text{Gr}_n^W(H^n(\widetilde{Z},\Q_\ell))
\]
is injective.
Let $D\subset Z$ be a Cartier divisor containing the singular locus of $Z$ and $E:=\pi^{-1}(D)$.
Then $\pi^*$ induces a map of long exact sequences of cohomology with compact support
\[
\adjustbox{scale=0.85,center}{
\begin{tikzcd}
\cdots \ar[r]&H^{n-1}(D,\Q_\ell) \ar[r]\ar[d]& H^n_c(Z\setminus D,\Q_\ell) \ar[r]\ar[d] & H^n(Z,\Q_\ell)\ar[r]\ar[d] & H^n(D,\Q_\ell)\ar[r]\ar[d]&\cdots\\
\cdots \ar[r]&H^{n-1}(E,\Q_\ell) \ar[r]& H^n_c(\widetilde{Z}\setminus E,\Q_\ell) \ar[r] & H^n(\widetilde{Z},\Q_\ell)\ar[r] & H^n(E,\Q_\ell)\ar[r]&\cdots
\end{tikzcd}}\]
In the above diagram, the second vertical map $\pi^*\colon H^n_c(Z\setminus D,\Q_\ell)\rightarrow H^n_c(\widetilde{Z}\setminus E,\Q_\ell)$ is injective because the composition with $\pi_*\colon H^n_c(\widetilde{Z}\setminus E,\Q_\ell)\rightarrow H^n_c(Z\setminus D, \Q_\ell)$ is the multiplication by the degree of $\pi\colon \widetilde{Z}\setminus E\rightarrow Z\setminus D$ on each component.
Using that $H^{n-1}(E,\Q_\ell)$ is mixed of weight $\leq n-1$ by \cite[Theorem 3.13]{deligne1980weil}, an easy diagram chase shows that 
\begin{align*}
\Ker(\text{Gr}_n^W(\pi^*)\colon \text{Gr}_n^W(H^n(Z,\Q_\ell))\rightarrow \text{Gr}_n^W(H^n(\widetilde{Z},\Q_\ell)))\\
\subset 
\Ker(\text{Gr}_n^W(\pi^*)\colon \text{Gr}_n^W(H^n(D,\Q_\ell))\rightarrow \text{Gr}_n^W(H^n(E,\Q_\ell))).
\end{align*}
Using an alteration of $E$ and induction on $\dim Z$, we see that the latter group is zero. 
The result now follows.
\end{proof}

\begin{lem}\label{n1stable}
The group $N^1H^j(X,\Z_\ell)/\widetilde{N}^1H^j(X,\Z_\ell)$ is a stable birational invariant of smooth projective connected $k$-varieties $X$.
\end{lem}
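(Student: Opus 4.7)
\emph{Plan.} Stable birational invariance of $N^1 H^j/\widetilde{N}^1 H^j$ reduces to two separate invariances: under the operation $X \mapsto X \times \P^n$, and under birational equivalence of smooth projective connected $k$-varieties.

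For the first, the K\"unneth decomposition
\[H^j(X \times \P^n, \Z_\ell) = \bigoplus_{i=0}^n H^{j-2i}(X, \Z_\ell(-i)) \cdot h^i\]
(with $h \in H^2(\P^n, \Z_\ell(1))$ the hyperplane class) splits the cohomology. The summands with $i \geq 1$ are pushforwards from the smooth projective subvarieties $X \times L$ for $L \subset \P^n$ linear of codimension $i$, so they lie in $\widetilde{N}^i \subseteq \widetilde{N}^1$. The $i = 0$ summand is $p^* H^j(X, \Z_\ell)$ for the projection $p \colon X \times \P^n \to X$; since $p^*$ is split injective (retraction $p_*(-\cdot h^n)$) and respects both filtrations by \cref{lem-csccorr}, it induces an isomorphism on $N^1/\widetilde{N}^1$.

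For the second, given a birational $\phi \colon X \dashrightarrow Y$ with graph closure $\Gamma_\phi \in CH^d(X \times Y)$, the actions $(\Gamma_\phi)_*$ and $(\Gamma_\phi^t)_*$ respect $N^1$ and $\widetilde{N}^1$ by \cref{lem-csccorr} and so descend to the quotients; they should be mutually inverse. The compositions $\Gamma_\phi^t \circ \Gamma_\phi$ and $\Gamma_\phi \circ \Gamma_\phi^t$ agree with $\Delta_X$ and $\Delta_Y$ on $U \times U$ and $V \times V$, where $U \subset X$, $V \subset Y$ are the open loci on which $\phi$ is biregular; hence the error $R_X := \Gamma_\phi^t \circ \Gamma_\phi - \Delta_X$ is supported on $(E \times X) \cup (X \times E)$ with $E := X \setminus U$ of codimension $\geq 1$ in $X$ (and analogously for $Y$). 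The task reduces to showing that such a supported cycle $R_X$ sends $N^1 H^j(X, \Z_\ell)$ into $\widetilde{N}^1 H^j(X, \Z_\ell)$.

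The main obstacle lies in this last step. Splitting $R_X = R_1 + R_2$ with $R_2$ supported on $X \times E$, the $R_2$ case is the clean one: taking an $\ell'$-alteration $\widetilde{E} \to E$ from \cref{gabber-alterations}, combined with $\ell'$-alterations of the irreducible components of $R_2$ and the base-change and projection formulas in $\ell$-adic cohomology, expresses $R_{2*}\alpha$ as a pushforward from the smooth projective $\widetilde{E}$ of dimension $\leq d-1$ to $X$, hence in $\widetilde{N}^1 H^j(X, \Z_\ell)$ by definition; this argument works for any $\alpha \in H^j(X, \Z_\ell)$. For $R_1 \subset E \times X$ the analogous construction only yields $R_{1*}\alpha \in N^1 H^j(X, \Z_\ell)$; the delicate point is that one must additionally exploit that $\alpha \in N^1$ is supported on some $T \subsetneq X$ of codimension $\geq 1$ (moving $T$ generically with respect to $E$ if necessary), so that its restriction $\iota^*\alpha$ to $E$ is supported in codimension $\geq 1$ within $E$, thereby allowing an iterated alteration argument to realize $R_{1*}\alpha$ as a pushforward from a smooth projective variety of dimension $\leq d-1$ to $X$.
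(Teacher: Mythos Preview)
Your $\P^n$-invariance argument is fine and essentially matches the paper's. The birational part, however, has a genuine gap in the handling of your $R_1$ term.

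You observe only that the error $R_X = \Gamma_\phi^t \circ \Gamma_\phi - \Delta_X$ is supported on $(E \times X) \cup (X \times E)$, and then try to treat the piece $R_1$ supported on $E \times X$ by exploiting $\alpha \in N^1$. But this does not close. Factoring $R_1$ through an $\ell'$-alteration $\tilde\iota\colon \widetilde{E}\to E \hookrightarrow X$ gives (up to an $\ell$-unit) $R_{1*}\alpha = \widetilde{R}_{1*}(\tilde\iota^*\alpha)$ for some $\widetilde{R}_1 \in CH^{d-1}(\widetilde{E}\times X)$. Now $\tilde\iota^*$ and $\widetilde{R}_{1*}$ each preserve the coniveau degree by \cref{lem-csccorr}, so $\tilde\iota^*\alpha \in N^1 H^j(\widetilde{E})$ and $\widetilde{R}_{1*}(\tilde\iota^*\alpha) \in N^1 H^j(X)$ --- but nothing forces it into $\widetilde{N}^1 H^j(X)$. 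Your proposed ``iterated alteration'' would require upgrading $\tilde\iota^*\alpha$ from $N^1$ to $\widetilde{N}^1$ on $\widetilde{E}$, i.e.\ exactly the distinction the lemma is about, so the argument is circular. Moving the support $T$ of $\alpha$ into general position with $E$ does not help: it again only yields support information (coniveau), never a realization as a pushforward from a smaller smooth variety (strong coniveau).

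The paper avoids this entirely by using the sharper support statement: following \cite[Example 16.1.11]{fulton1998intersection}, one has $\Gamma_\phi^T \circ \Gamma_\phi = \Delta_X + \Gamma$ with $\Gamma$ supported on $D \times D$ for a single divisor $D \subset X$. In particular every irreducible component $\Gamma_i$ of $\Gamma$ has second projection $Z_i = q(\Gamma_i)$ contained in $D$, hence of dimension $\leq d-1$. Then your clean ``$R_2$-type'' argument --- alterate $Z_i$, lift $\Gamma_i$ compatibly, and write $\Gamma_*\alpha$ as a sum of pushforwards along the maps $\widetilde{Z_i}\to X$ from smooth projective varieties of dimension $\leq d-1$ --- applies to \emph{all} of $\Gamma$ and for \emph{every} $\alpha \in H^j(X,\Z_\ell)$ (no need to assume $\alpha\in N^1$). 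There is no $R_1$ case to handle.
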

\begin{proof}
We first show that $N^1H^j(X,\Z_\ell)/\widetilde{N}^1H^j(X,\Z_\ell)$ is invariant under replacing $X$ by $X\times \P^n$.
Let $\pi\colon X\times \P^n\rightarrow X$ be the first projection.
For the proof, the argument of \cite[Lemma 2.3]{benoist2021coniveau} works with a slight modification.
The only necessary change to be made is where it is shown that, for a class $\alpha\in H^j(X,\Z_\ell)$, if $\pi^* \alpha \in H^j(X\times \P^n,\Z_\ell)$ has strong coniveau $\geq 1$, then $\alpha$ has strong coniveau $\geq 1$.
(The issue is that Bertini's theorem for basepoint free linear systems is not available in positive characteristic.)
Here we may assume $\pi^*\alpha=f_*\beta$, where $f\colon T\rightarrow X\times \P^n$ is a morphism from a smooth projective connected $k$-variety $T$ of dimension $d+n-1$, and $\beta\in H^{j-2}(T,\Z_\ell)$.
Let $i\colon X\rightarrow X\times \P^n$ be the inclusion of a general fiber $X\times \{p\}$.
Then $W=T\times_{X\times \P^n}X$ is a (possibly singular) subvariety of $X$ of pure dimension $d-1$.
As a cycle, write $W=\sum n_i W_i$, where $W_i$ are the irreducible components of $W$. For each $i$, let $g_i\colon\widetilde{W_i}\rightarrow W_i$ be an $\ell'$-alteration (\cref{gabber-alterations}), and set $m_i=\deg(g_i)$, which is prime to $\ell$. It is straightforward to see that
\[
\alpha
=\pi_*(\pi^*\alpha\cdot i_*X)
=\pi_*(f_*\beta\cdot i_*X)
=\pi_*f_*(\beta\cdot W)
=\sum_i \frac{n_i}{m_i}\pi_*f_* (g_i)_*(g_i)^*\beta,
\]
which shows $\alpha\in \widetilde{N}^1H^j(X,\Z_\ell)$.

We now show the birational invariance of $N^1H^j(X,\Z_\ell)/\widetilde{N}^1H^j(X,\Z_\ell)$; this will complete the proof.
We avoid the use of Hironaka's resolution of singularities or the weak factorization theorem as in the proof of \cite[Proposition 2.4]{benoist2021coniveau}, and instead adapt the strategy of \cite[Example 16.1.11]{fulton1998intersection} and apply $\ell'$-alterations (\cref{gabber-alterations}).
Let $X, Y$ be two smooth projective connected $k$-varieties, $\varphi\colon X\dashrightarrow Y$ be a birational map, and $\Gamma_{\varphi}$ be the closure of the graph of $\varphi$.
The transpose $\Gamma_{\varphi}^T$ agrees with the closure of the graph of $\varphi^{-1}$.
Then the composition $\Gamma_{\varphi}^T\circ \Gamma_{\varphi}$ may be written as
\begin{equation}\label{composition-eq}\Gamma_{\varphi}^T\circ \Gamma_{\varphi}= \Delta_X + \Gamma \in CH^d(X\times X),\end{equation}
where $\Gamma$ is supported on $D\times D$ for some divisor $D\subset X$. 

We claim that $\Gamma_*(N^1H^j(X,\Z_\ell))\subset\widetilde{N}^1H^j(X,\Z_\ell)$. 
Indeed, let $p,q\colon X\times X\rightarrow X$ be the projections.
We write $\Gamma=\sum n_i \Gamma_i$, where $\Gamma_i$ are the irreducible components of $\text{Supp}(\Gamma)$.
For each $i$, let $Z_i=q(\Gamma_i)$, so that $Z_i$ is a proper closed subset of $X$, and let $f_i\colon\widetilde{Z_i}\rightarrow Z_i$ be an $\ell'$-alteration.
Then choose an irreducible component $\Gamma_i'$ of $\Gamma_i\times_{Z_i} \widetilde{Z_i}$ that dominates both $\widetilde{Z_i}$ and $\Gamma_i$ and such that $\deg(\Gamma_i'/\Gamma_i)$ is prime to $\ell$; such a component exists by the projection formula. Then take another $\ell'$-alteration $\widetilde{\Gamma}_i\rightarrow \Gamma_i'$.
Now let $g_i\colon \widetilde{\Gamma_i}\rightarrow \Gamma_i$ and $h_i\colon \widetilde{\Gamma_i}\rightarrow \widetilde{Z_i}$ be the resulting maps, so that $q\circ g_i=f_i\circ h_i$. Set $m_i=\deg(g_i)$: it is an integer prime to $\ell$. For every class $\alpha\in H^j(X,\Z_\ell)$ we have
\[
\Gamma_*\alpha
= \sum_i \frac{n_i}{m_i} q_*(g_i)_*(g_i)^* p^*\alpha=\sum_i \frac{n_i}{m_i} (f_i)_*(h_i)_*(g_i)^* p^*\alpha.
\]
This shows that $\Gamma_*\alpha\in \widetilde{N}^1H^{j}(X,\Z_\ell)$. Thus $\Gamma_*(N^1H^j(X,\Z_\ell))\subset\widetilde{N}^1H^j(X,\Z_\ell)$, as claimed.

By \cref{lem-csccorr}, $\Gamma_*$, $(\Gamma_{\varphi})_*$ and $(\Gamma_{\varphi}^T)_*$ respect $N^1H^j(X,\Z_\ell)$ and $\widetilde{N}^1H^j(X,\Z_\ell)$, and therefore induce endomorphisms of $N^1H^j(X,\Z_\ell)/\widetilde{N}^1H^j(X,\Z_\ell)$. The claim shows that $\Gamma_*=0$ is zero on $N^1H^j(X,\Z_\ell)/\widetilde{N}^1H^j(X,\Z_\ell)$, hence by (\ref{composition-eq}) we deduce that \[(\Gamma_{\varphi}^T)_*\circ(\Gamma_{\varphi})_*=(\Delta_X)_*=\id \text{ on $N^1H^j(X,\Z_\ell)/\widetilde{N}^1H^j(X,\Z_\ell)$.}\]
Repeating the argument with $X$ replaced by $Y$, we obtain an isomorphism \[
(\Gamma_{\varphi})_*\colon N^1H^j(X,\Z_\ell)/\widetilde{N}^1H^j(X,\Z_\ell)\xrightarrow{\sim}N^1H^j(Y,\Z_\ell)/\widetilde{N}^1H^j(Y,\Z_\ell).\]
This completes the proof.
\end{proof}

\begin{cor}\label{cor-stablyrational}
If $X$ is stably rational, then $N^1H^j(X,\Z_\ell)=\widetilde{N}^1H^j(X,\Z_\ell)$.
\end{cor}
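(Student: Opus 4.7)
The plan is to reduce to the case of projective space via the stable birational invariance established in Lemma \ref{n1stable}. By definition, $X$ stably rational means that $X \times \P^m$ is birational to $\P^N$ for some $m, N \geq 0$; equivalently, $X$ is stably birational to $\P^N$. Applying Lemma \ref{n1stable}, I would obtain an isomorphism
\[
N^1H^j(X,\Z_\ell)/\widetilde{N}^1H^j(X,\Z_\ell) \;\cong\; N^1H^j(\P^N,\Z_\ell)/\widetilde{N}^1H^j(\P^N,\Z_\ell),
\]
so it suffices to prove the equality $N^1H^j(\P^N,\Z_\ell) = \widetilde{N}^1H^j(\P^N,\Z_\ell)$ for all $j \geq 0$.

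For this, I would use the explicit description of the $\ell$-adic cohomology of $\P^N$: odd cohomology vanishes, and $H^{2i}(\P^N,\Z_\ell)$ is a free $\Z_\ell$-module of rank one, generated by the $i$-fold self-intersection $h^i$ of the hyperplane class for $0 \le i \le N$. For $j=0$ we have $N^1H^0 = 0$ since $\P^N$ is irreducible, and the odd cases are trivial. For $j = 2i \geq 2$, the class $h^i$ is the pushforward $\iota_* 1$ under the inclusion $\iota \colon \P^{N-i} \hookrightarrow \P^N$ of a linear subspace of codimension $i \geq 1$, and $\P^{N-i}$ is smooth projective connected. Thus every element of $H^{2i}(\P^N,\Z_\ell)$ lies in $\widetilde{N}^i H^{2i}(\P^N,\Z_\ell) \subset \widetilde{N}^1 H^{2i}(\P^N,\Z_\ell)$, which a fortiori contains $N^1 H^{2i}(\P^N,\Z_\ell)$.

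Combining the two steps yields the desired equality $N^1H^j(X,\Z_\ell) = \widetilde{N}^1H^j(X,\Z_\ell)$. There is no serious obstacle here: the corollary is a direct consequence of Lemma \ref{n1stable} together with the trivial computation of the two filtrations on $\P^N$, so the only point worth spelling out carefully is the reduction of stable rationality to stable birationality with $\P^N$ in order to apply the invariance statement.
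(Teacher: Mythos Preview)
Your proposal is correct and follows the same approach as the paper: the corollary is stated without proof there, as an immediate consequence of Lemma~\ref{n1stable} together with the trivial vanishing of the invariant for $\P^N$. You have simply spelled out the computation for $\P^N$, which is fine.
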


\cref{cor-stablyrational} may be strengthened using the cohomological decomposition of the diagonal.

\begin{lem}
If $X$ admits a decomposition of the diagonal in $H^{2d}(X\times X,\Z_\ell)$, then $N^1H^j(X,\Z_\ell)=\widetilde{N}^1H^j(X,\Z_\ell)$.
\end{lem}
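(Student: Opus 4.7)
The plan is to reduce, via the transpose of the given decomposition, to a statement about pushforwards from a smooth projective variety of dimension $d-1$. Write the decomposition as $[\Delta_X] = [X \times x] + \delta$ in $H^{2d}(X \times X, \Z_\ell)$, where $x$ is a $0$-cycle of degree $1$ and $\delta$ lies in the image of $H^{2d}_{D \times X}(X \times X, \Z_\ell) \to H^{2d}(X \times X, \Z_\ell)$ for some divisor $D \subset X$. Applying the swap involution $\sigma\colon X \times X \to X \times X$, $(a,b)\mapsto(b,a)$, to this relation, we obtain the transposed decomposition $[\Delta_X] = [x \times X] + \delta^T$, where $\delta^T := \sigma^* \delta$ is supported on $X \times D$.

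For every $\alpha \in H^j(X, \Z_\ell)$ with $0 < j < 2d$, the correspondence $[x \times X]_*$ annihilates $\alpha$: its action factors through the pullback along the constant map $X \to X$, $y \mapsto x$, which is zero on positive-degree cohomology. Hence the identity $[\Delta_X]_* = \id$ forces $\alpha = \delta^T_* \alpha$.

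To conclude, we express $\delta^T_* \alpha$ as a pushforward from a smooth projective variety of dimension $d-1$. By \cref{gabber-alterations}, pick an $\ell'$-alteration $g\colon \tilde{D} \to D$ of some degree $m$ coprime to $\ell$, with $\tilde{D}$ smooth projective of dimension $d-1$, and set $\tilde\iota\colon \tilde{D} \to X$ to be the composition with the inclusion $D \hookrightarrow X$. Using $\ell'$-alterations together with cohomological purity, in the spirit of the proofs of \cref{lem-sc_corr} and \cref{n1stable}, we lift $\delta^T$ through the Gysin map to a class $\widetilde{\delta^T} \in H^{2d-2}(X \times \tilde{D}, \Z_\ell(-1))$ satisfying $(\id \times \tilde\iota)_* \widetilde{\delta^T} = m \cdot \delta^T$ in $H^{2d}(X \times X, \Z_\ell)$. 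Since $m$ is a unit in $\Z_\ell$, the projection formula yields
\[
\delta^T_* \alpha = \tilde\iota_*(\beta), \qquad \beta := \tfrac{1}{m}\, (\pi_{\tilde{D}})_*\bigl(\pi_X^* \alpha \cdot \widetilde{\delta^T}\bigr) \in H^{j-2}(\tilde{D}, \Z_\ell(-1)),
\]
where $\pi_X, \pi_{\tilde{D}}$ denote the two projections of $X \times \tilde{D}$. By definition, this gives $\alpha \in \widetilde{N}^1 H^j(X, \Z_\ell)$. The remaining cases $j=0$ and $j=2d$ are elementary.

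The hard part will be the cohomological lift $\widetilde{\delta^T}$ when the divisor $D$ is singular: unlike in \cref{n1stable}, the class $\delta^T$ is \emph{a priori} only a cohomology class supported on $X \times D$ (the Chow cycle $\Delta_X - X \times x$ is generally not supported there), so the $\ell'$-alteration argument has to be carried out at the level of $\ell$-adic cohomology rather than on the irreducible components of an actual algebraic cycle.
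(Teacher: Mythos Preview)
Your overall strategy---act on $H^j(X,\Z_\ell)$ by the decomposition and then factor the ``supported on a divisor'' piece through an $\ell'$-alteration of $D$---is exactly what the paper does. However, you have misread the hypothesis, and this is what creates the ``hard part'' you flag at the end. A cohomological decomposition of the diagonal does not merely assert that $[\Delta_X]-[x\times X]$ lies in the image of $H^{2d}_{X\times D}(X\times X,\Z_\ell)$; by definition it provides an actual algebraic cycle $\Gamma$ supported on $X\times D$ with $[\Delta_X]=[x\times X]+[\Gamma]$ in $H^{2d}(X\times X,\Z_\ell)$. This is precisely how the paper sets it up (``the cycle $\Gamma$ is supported on $X\times D$''). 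With $\Gamma$ an honest cycle, one writes $\Gamma=\sum n_i\Gamma_i$, alterates each $\Gamma_i$ as in the proof of \cref{n1stable}, and obtains $\alpha=\Gamma_*\alpha\in\widetilde{N}^1H^j(X,\Z_\ell)$ directly; no cohomological lifting through a singular $D$ is needed.

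Under your weaker reading of the hypothesis, the lift you claim---that $m\cdot\delta^T$ is in the image of $(\id\times\tilde\iota)_*$---is not justified. The exactness statement underlying \cref{lem-csc} (pushforward from an alteration surjects onto the kernel of restriction to the complement) is proved by a weight argument and holds only with $\Q_\ell$-coefficients; integrally it can fail, so your sketch does not close. The fix is simply to use the hypothesis correctly. A minor further point: the detour through the swap involution is unnecessary, since the two conventions for the decomposition (support on $D\times X$ versus $X\times D$) are interchangeable and the paper just starts from the convenient one.
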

\begin{proof}
The assertion is clear when $j=0$. Suppose that $j\geq 1$. In this case, we are going to prove that $H^j(X,\Z_\ell)=N^1H^j(X,\Z_\ell)=\widetilde{N}^1H^j(X,\Z_\ell)$. 

Let $[\Delta_X]= [x\times X]+ [\Gamma]\in H^{2d}(X,\Z_\ell)$ be the decomposition of the diagonal, where $x\in X$ is a closed point and the cycle $\Gamma$ is supported on $X\times D$ for some divisor $D\subset X$. 
Applying $\ell'$-alterations (\cref{gabber-alterations}) to $\Gamma$ in the same way as in the proof of \cref{n1stable} yields that for every class $\alpha\in H^j(X,\Z_\ell)$ we have $\alpha=\Gamma_*\alpha \in \widetilde{N}^1H^j(X,\Z_\ell)$.
The result now follows.
\end{proof}

\begin{cor}\label{cor-retractrational}
If $X$ is retract rational, then $N^1H^j(X,\Z_\ell)=\widetilde{N}^1H^j(X,\Z_\ell)$.
\end{cor}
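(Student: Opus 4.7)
The plan is to reduce the corollary to the preceding lemma by producing an integral decomposition of the diagonal
\[
[\Delta_X]=[x\times X]+[\Gamma]\in CH_d(X\times X),\qquad \Gamma\text{ supported on }X\times D,
\]
for some divisor $D\subset X$; applying the cycle class map will then give the same identity in $H^{2d}(X\times X,\Z_\ell)$, and the previous lemma will conclude. Such a decomposition is obtained via a Bloch--Srinivas argument once one establishes universal $CH_0$-triviality.

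First I would translate retract rationality into the assertion $\deg\colon CH_0(X_K)\xrightarrow{\sim}\Z$ for $K=k(X)$. Fix rational maps $\phi\colon X\dashrightarrow\P^n_k$ and $\psi\colon\P^n_k\dashrightarrow X$ with $\psi\circ\phi=\mathrm{id}_X$ on a common open $U\subset X$, and set $Z=X\setminus U$. A direct computation on the triple product shows that the Chow-theoretic composition of the closures of graphs satisfies
\[
\Gamma_\psi\circ\Gamma_\phi=\Delta_X+\Gamma_E\in CH^d(X\times X),
\]
with $\Gamma_E$ supported on $Z\times X$. By the moving lemma for $0$-cycles, valid over the infinite field $K$, the operator $(\Gamma_E)_*$ vanishes on $CH_0(X_K)$, so $[\Gamma_\psi]_*\circ[\Gamma_\phi]_*=\mathrm{id}$ there. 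This realizes $CH_0(X_K)$ as a split subgroup of $CH_0(\P^n_K)=\Z$, and it is nonzero since $X$ has closed points over the separably closed base $k$.

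Next comes the Bloch--Srinivas step. Fix a closed point $x\in X$; the isomorphism $CH_0(X_K)\cong\Z$ forces $[\eta_K]=[x_K]$ in $CH_0(X_K)$, where $\eta_K$ is the tautological $K$-point of $X_K$ and $x_K$ the constant $K$-point attached to $x$. The localization sequence for $p_2\colon X\times X\to X$ identifies the kernel of $CH_d(X\times X)\to CH_0(X_K)$ with the subgroup generated by cycles supported on $X\times D$ for $D\subsetneq X$ closed. Since $[\Delta_X]\mapsto[\eta_K]$ and $[x\times X]\mapsto[x_K]$, the difference $[\Delta_X]-[x\times X]$ lies in this kernel; enlarging $D$ to a divisor furnishes the desired Chow decomposition, and the cycle class map together with the preceding lemma finishes the proof.

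The main obstacle is Step 1: one must carefully pin down the support of the error term $\Gamma_E$ in $\Gamma_\psi\circ\Gamma_\phi-\Delta_X$, which is delicate because $\phi$ and $\psi$ are merely rational (not regular) maps, and verify that moving $0$-cycles off $Z_K$ really does kill $(\Gamma_E)_*$ on $CH_0(X_K)$. Once this is secured, the Bloch--Srinivas localization in Step 2 and the passage through the cycle class map are standard.
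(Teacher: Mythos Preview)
Your proposal is correct and follows the same strategy as the paper: reduce to the preceding lemma by producing an integral Chow decomposition of the diagonal. The only difference is that where the paper cites Merkurjev \cite{merkurjev2008unramifiedelements} for the implication ``retract rational $\Rightarrow$ $CH_0(X_{k(X)})\cong\Z$ via degree'', you sketch a direct proof via the graph-closure identity $\Gamma_\psi\circ\Gamma_\phi=\Delta_X+\Gamma_E$, which is essentially Merkurjev's argument.
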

\begin{proof}
Under the assumption, the diagonal element in $CH_0(X_{k(X)})$ is contained in the image of the restriction map $CH_0(X)\rightarrow CH_0(X_{k(X)})$ by results of Merkurjev \cite[Theorem 2.11, Proposition 2.15]{merkurjev2008unramifiedelements}.
By taking the closure of the diagonal element in $X\times X$, this property is equivalent to the integral Chow decomposition of the diagonal, which in turn implies the cohomological decomposition.
\end{proof}

We conclude this section by pointing out a homological analogue of \cref{n1stable}.
We define $N_cH_j(X,\Z_\ell):=N^{d-c}H^{2d-j}(X,\Z_\ell)$.

\begin{lem}\label{niveaui-1-stable}
The quotient
\[N_{i-1}H_{i}(X,\Z_{\ell})/\widetilde{N}_{i-1}H_{i}(X,\Z_{\ell})\]
 is a stable birational invariant of smooth projective $k$-varieties.
\end{lem}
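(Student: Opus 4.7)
My plan is to adapt the proof of \cref{n1stable} to the homological setting. The argument will proceed in two steps: invariance under $X\mapsto X\times\P^n$, and birational invariance.

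For Step 1, I consider the projection $\pi\colon X\times\P^n\to X$ and the section $\iota\colon X\hookrightarrow X\times\P^n$ at a point, so that $\pi_*\circ\iota_*=\mathrm{id}$. Proper pushforward preserves dimensions of supports, so both $\pi_*$ and $\iota_*$ respect the niveau and strong niveau filtrations and descend to the quotient $Q:=N_{i-1}H_i(-,\Z_\ell)/\widetilde{N}_{i-1}H_i(-,\Z_\ell)$. Injectivity of $\iota_*$ on $Q$ is immediate: if $\iota_*\alpha=\sum_a f_{a,*}(\beta_a)$ with $f_a\colon T_a\to X\times\P^n$ smooth projective of dimension $\leq i-1$, applying $\pi_*$ gives $\alpha=\sum_a(\pi\circ f_a)_*(\beta_a)\in\widetilde{N}_{i-1}H_i(X,\Z_\ell)$. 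For surjectivity, I would use compatibility of both filtrations with the K\"unneth decomposition $H_i(X\times\P^n,\Z_\ell)=\bigoplus_k H_{i-2k}(X,\Z_\ell)\otimes[\P^k]$ to reduce to showing $(N_{i-1-k}/\widetilde{N}_{i-1-k})H_{i-2k}(X,\Z_\ell)=0$ for $k\geq 1$, which follows from $\ell'$-alterations of the appropriate low-dimensional subvarieties together with the general principle that $N_cH_j=\widetilde{N}_cH_j$ with $\Z_\ell$-coefficients whenever $c\geq j$.

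For Step 2, let $\varphi\colon X\dashrightarrow Y$ be a birational map with graph closure $\Gamma_\varphi$. The decomposition
\[\Gamma_\varphi^T\circ\Gamma_\varphi = \Delta_X + \Gamma\in CH^d(X\times X),\]
with $\Gamma$ supported on $D\times D$ for some divisor $D\subset X$, from the proof of \cref{n1stable}, still applies. The key claim is that $\Gamma_*$ sends $N_{i-1}H_i(X,\Z_\ell)$ into $\widetilde{N}_{i-1}H_i(X,\Z_\ell)$. To prove the claim, I would take $\alpha\in N_{i-1}H_i(X,\Z_\ell)$ supported on $Z\subset X$ of dimension $\leq i-1$ and write $\Gamma=\sum_j n_j\Gamma_j$ in irreducible components. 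By the moving lemma \cite[Tag 0B0D]{stacks-project} we may assume each $\Gamma_j$ meets $Z\times X$ properly in $X\times X$, so that
\[\dim((Z\times X)\cap\Gamma_j)=\dim(Z\times X)+\dim\Gamma_j-\dim(X\times X)=(i-1+d)+d-2d=i-1,\]
and $(\Gamma_j)_*\alpha$ is supported on $p_2((Z\times X)\cap\Gamma_j)\subset X$ of dimension $\leq i-1$. Applying $\ell'$-alterations (\cref{gabber-alterations}) to the irreducible components of $(Z\times X)\cap\Gamma_j$, and following the fiber-product-lifting procedure of the proof of \cref{n1stable}, I would express $\Gamma_*\alpha$ as a linear combination of pushforwards of classes from smooth projective varieties of dimension $\leq i-1$, with integer numerators and denominators prime to $\ell$. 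This places $\Gamma_*\alpha$ in $\widetilde{N}_{i-1}H_i(X,\Z_\ell)$.

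With the key claim, the proof concludes as in \cref{n1stable}: the diagonal decomposition gives $(\Gamma_\varphi^T)_*\circ(\Gamma_\varphi)_*=\mathrm{id}$ on $Q(X)$, and swapping $X$ and $Y$ yields an inverse isomorphism. The main obstacle will be the key claim of Step 2: although the dimension count delivers the right support bound on $\Gamma_*\alpha$, realizing this class integrally as a pushforward from smooth projective varieties of dimension $\leq i-1$ requires the careful alteration-lifting procedure of \cref{n1stable}, now applied in the new dimensional range.
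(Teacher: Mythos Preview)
Your Step~2 contains a genuine gap. After moving $\Gamma$ so that each $\Gamma_j$ meets $Z\times X$ properly and setting $W_j=(Z\times X)\cap\Gamma_j$, you correctly obtain $\dim W_j\le i-1$ and hence $(\Gamma_j)_*\alpha\in N_{i-1}H_i(X,\Z_\ell)$. But the passage to $\widetilde N_{i-1}$ fails. To imitate the procedure of \cref{n1stable} you would need a class on an alteration $\widetilde W_j$ of $W_j$ whose pushforward to $X$ equals $(\Gamma_j)_*\alpha$. The only natural candidate is the pullback $(g_j)^*p^*\alpha\in H^{2d-i}(\widetilde W_j)$ along $g_j\colon\widetilde W_j\to X\times X$; however $\dim\widetilde W_j=i-1$, so its pushforward lands in $H^{4d-3i+2}(X)=H_{3i-2d-2}(X)$, not in $H_i(X)$. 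There is no mechanism for lifting the class $p^*\alpha\cup[\Gamma_j]\in H^{4d-i}_{W_j}(X\times X)$ to $\widetilde W_j$: that is precisely the difference between $N$ and $\widetilde N$, and your hypothesis $\alpha\in N_{i-1}H_i$ gives no control over this. Note also that moving $\Gamma$ destroys the only structural feature of $\Gamma$ that could help, namely its support in $D\times D$.

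The paper's argument avoids this entirely. It does \emph{not} move $\Gamma$ and does \emph{not} use the support of $\alpha$; instead it uses that $\Gamma$ is supported on $D\times X$ with $D$ a divisor. Choosing an $\ell'$-alteration $f\colon\widetilde D\to D$ and lifting $\Gamma$ to $\widetilde\Gamma\in CH^{d-1}(\widetilde D\times X)\otimes\Z_\ell$ with $f_*\widetilde\Gamma=\Gamma$, one gets $\Gamma_*\alpha=\widetilde\Gamma_*(f^*\alpha)$ for \emph{every} $\alpha\in H_i(X,\Z_\ell)$, with $f^*\alpha\in H_{i-2}(\widetilde D,\Z_\ell)$. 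Now the Lefschetz hyperplane theorem gives $H_{i-2}(\widetilde D,\Z_\ell)=\widetilde N_{i-2}H_{i-2}(\widetilde D,\Z_\ell)$, and functoriality of $\widetilde N_*$ under correspondences yields $\Gamma_*H_i(X,\Z_\ell)\subset\widetilde N_{i-1}H_i(X,\Z_\ell)$. This is strictly stronger than your key claim and shows that $H_i/\widetilde N_{i-1}$ (and, by the same argument, $H_i/N_{i-1}$) is a stable birational invariant; the lemma follows by taking the kernel of the surjection between these two quotients. Your Step~1 is essentially fine, and in fact the ``general principle'' you invoke is exactly the Lefschetz input used above---but applied on $\widetilde D$, not on intersections with the support of $\alpha$.
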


The invariant of \Cref{niveaui-1-stable} is zero if $\dim X<i$. To the authors' knowledge, it is zero in all known examples.

\begin{proof}
We will prove that $H_{i}(X,\Z_{\ell})/\widetilde{N}_{i-1}H_{i}(X,\Z_{\ell})$ is a stable birational invariant.
A similar argument will show that $H_{i}(X,\Z_{\ell})/N_{i-1}H_{i}(X,\Z_{\ell})$ is also a stable birational invariant, and the statement will follow.

We first establish the invariance of $H_{i}(X,\Z_{\ell})/\widetilde{N}_{i-1}H_{i}(X,\Z_{\ell})$ under taking the product with $\P^{1}$.
By the projective bundle formula and the Lefschetz hyperplane section theorem, one has
\[
H_{i}(X\times \P^{1},\Z_{\ell})= H_{i}(X,\Z_{\ell})+\widetilde{N}_{i-1}H_{i}(X\times \P^{1},\Z_{\ell}).
\]
Hence the push-forward $H_{i}(X\times \P^{1},\Z_{\ell})\rightarrow H_{i}(X,\Z_{\ell})$ induces an isomorphism:
\[
H_{i}(X\times \P^{1},\Z_{\ell})/\widetilde{N}_{i-1}H_{i}(X\times \P^{1},\Z)\xrightarrow{\sim}H_{i}(X,\Z_{\ell})/\widetilde{N}_{i-1}H_{i}(X,\Z_{\ell}).
\]

We now prove the birational invariance of $H_{i}(X,\Z_{\ell})/\widetilde{N}_{i-1}H_{i}(X,\Z_{\ell})$.
Suppose that $X$ and $Y$ are birational and $\phi\colon X\dashrightarrow Y$ is a birational map.
Let $\Gamma_{\phi}$ be the closure of the graph of the birational map $\phi$.
We aim to show the isomorphism:
\[
(\Gamma_{\phi})_{*}\colon H_{i}(X,\Z_{\ell})/\widetilde{N}_{i-1}H_{i}(X,\Z_{\ell})\xrightarrow{\sim}H_{i}(Y,\Z_{\ell})/\widetilde{N}_{i-1}H_{i}(Y,\Z_{\ell}).
\]
The composition $\Gamma_{\phi}^{T}\circ\Gamma_{\phi}$ may be written as $\Gamma_{\phi}^{T}\circ\Gamma_{\phi}=\Delta_{X}+\Gamma\in CH^{n}(X\times X)$,
where $\Gamma$ is supported on $D\times D$ for some divisor $D\subset X$.
It remains for us to show that $\Gamma_{*}$ acts as zero on $H_{i}(X,\Z_{\ell})/\widetilde{N}_{i-1}H_{i}(X,\Z_{\ell})$.

Arguing as in the proof of \cref{niveaui-1-stable}, for an $\ell'$-prime alteration $f\colon \widetilde{D}\rightarrow D$ (which exists by \cref{gabber-alterations}),
one may find $\widetilde{\Gamma}\in CH^{n-1}(\widetilde{D}\times X)\otimes \Z_{\ell}$ such that $\Gamma=f_{*}\widetilde{\Gamma}$.
Therefore, for every $\alpha\in H_{i}(X,\Z_{\ell})$, one has $\Gamma_{*}\alpha=\widetilde{\Gamma}_{*}f^{*}\alpha$.
This, together with the Lefschetz hyperplane section theorem and the functoriality of the filtration $\widetilde{N}_*$ 
(cf. \cref{lem-csccorr}) yields:
\[\Gamma_{*}H_{i}(X,\Z_{\ell})\subset \widetilde{\Gamma}_{*}H_{i-2}(\widetilde{D},\Z_{\ell})=\widetilde{\Gamma}_{*}\widetilde{N}_{i-2}H_{i-2}(\widetilde{D},\Z_{\ell})\subset \widetilde{N}_{i-1}H_{i}(X,\Z_{\ell}).\]
This concludes the proof.
\end{proof}

\subsection{Coniveau subgroups as Galois modules}
Here we prepare for Sections \ref{section-WAJconstruction}, \ref{section-WAJfinitefield}, \ref{sec8}, where we will work in an arithmetic setting.
We let $k$ be an arbitrary field (not necessarily separably closed), $\ell$ be a prime number invertible in $k$, and $X$ be a smooth projective geometrically connected $k$-variety of dimension $d$. Recall that we denote by $G$ the absolute Galois group of $k$. For all $j,m\geq 0$, the natural $G$-action on $\ov{X}$ makes $H^j(\ov{X},\Z_\ell(m))$ into a continuous $G$-module.

\begin{lem}\label{lem-cscgalois}
For all $j,m,c\geq 0$, 
\begin{align}\label{eq-c}
N^{c} H^{j}(\ov{X},\Z_\ell(m)) &=\sum_{Z\subset X} \Image\left(H^j_{\ov{Z}}(\ov{X},\Z_\ell(m))\rightarrow H^j(\ov{X},\Z_\ell(m))\right),
\end{align}
where $Z\subset X$ runs over the closed subvarieties of codimension $\geq c$,
and
\begin{align}\label{eq-sc}
\widetilde{N}^{c} H^{j}(\ov{X},\Z_\ell(m)) = \sum_{f\colon T\to X} \Image\left(f_* \colon H^{j-2c}(\ov{T},\Z_\ell(m-c))\rightarrow H^{j}(\ov{X},\Z_\ell(m))\right),
\end{align}
where the sum is over all smooth projective connected $k$-varieties  $T$ of dimension $d-c$ and $f\colon T\to X$ is a morphism.
As a consequence, the subgroups $N^cH^j(\ov{X},\Z_\ell(m)), \widetilde{N}^c H^j(\ov{X},\Z_\ell(m))$ of $H^j(\ov{X},\Z_\ell(m))$ are $G$-invariant. In particular, they are continuous $G$-modules.
\end{lem}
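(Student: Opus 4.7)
The plan is to prove the two equalities (\ref{eq-c}) and (\ref{eq-sc}) via a Galois-descent argument; the $G$-invariance will then follow formally from the $k$-rational nature of the summands on the right-hand sides.

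First, for (\ref{eq-c}), I will check $\supseteq$ by noting that each base change $\ov{Z}$ of a codimension-$\geq c$ closed subvariety $Z\subset X$ is itself a codimension-$\geq c$ closed subvariety of $\ov{X}$. For the reverse inclusion, given any closed $W\subset\ov{X}$ of codimension $\geq c$, I will spread $W$ out to a closed subvariety $W_L\subset X_L$ over a finite Galois extension $L/k$, and set $Z\subset X$ to be the scheme-theoretic image of $W_L$ under the finite morphism $X_L\to X$. Since this morphism is finite, $Z$ is a closed subvariety of $X$ of the same codimension as $W$, and $\ov{Z}$ is the $G$-orbit of $W$ in $\ov{X}$, hence contains $W$. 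The functoriality of local cohomology then gives
\[
\Image\left(H^j_W(\ov{X},\Z_\ell(m))\to H^j(\ov{X},\Z_\ell(m))\right)\subseteq \Image\left(H^j_{\ov{Z}}(\ov{X},\Z_\ell(m))\to H^j(\ov{X},\Z_\ell(m))\right).
\]

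Next, for (\ref{eq-sc}), a similar descent will work. For $\supseteq$: if $f\colon T\to X$ is a $k$-morphism with $T$ smooth projective connected of dimension $d-c$ over $k$, then $\ov{T}=T\times_k\ov{k}$ is a finite disjoint union of smooth projective connected $\ov{k}$-varieties of dimension $d-c$, and the pushforward $\ov{f}_*$ decomposes as a sum over these components, each contributing to the strong coniveau of $\ov{X}$ in the sense of (\ref{scdef}). For $\subseteq$: given $\ov{g}\colon \ov{T}_0\to\ov{X}$ with $\ov{T}_0$ a smooth projective connected $\ov{k}$-variety of dimension $d-c$, I will spread out $\ov{T}_0$ and $\ov{g}$ over some finite Galois extension $L/k$ to obtain $T_L$ and $g_L\colon T_L\to X_L$, and then view $T_L$ as a $k$-scheme via the composition $T_L\to\on{Spec}(L)\to\on{Spec}(k)$. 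This produces a smooth projective connected $k$-variety $T$ of dimension $d-c$ with a $k$-morphism $f\colon T\to X$ such that $T\times_k\ov{k}$ decomposes into the disjoint union of the Galois translates of $\ov{T}_0$; in particular $\Image(\ov{g}_*)\subseteq\Image(f_*\colon H^{j-2c}(\ov{T},\Z_\ell(m-c))\to H^j(\ov{X},\Z_\ell(m)))$, as desired.

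Once both equalities are established, the $G$-invariance is immediate: for every closed subvariety $Z\subset X$ (respectively every $k$-morphism $f\colon T\to X$), the group $H^j_{\ov{Z}}(\ov{X},\Z_\ell(m))$ (respectively $H^{j-2c}(\ov{T},\Z_\ell(m-c))$) is a continuous $G$-module and the natural map into $H^j(\ov{X},\Z_\ell(m))$ is $G$-equivariant, so its image is a $G$-submodule; summing preserves this.

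I expect the main (mild) obstacle to be the connectedness bookkeeping in the descent step of (\ref{eq-sc}): the $k$-variety $T$ obtained by viewing $T_L$ as a $k$-scheme is connected (because $T_L$ is) but typically not geometrically connected, which is fine here since the lemma's formulation of (\ref{eq-sc}) requires only connectedness over $k$ and decomposes $H^{j-2c}(\ov{T},\Z_\ell(m-c))$ as a direct sum over components.
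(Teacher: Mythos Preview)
Your proposal is correct and follows essentially the same approach as the paper: for (\ref{eq-c}) you take the Galois orbit of a closed subvariety (packaged as ``spread out and push forward,'' which produces exactly the orbit), and for (\ref{eq-sc}) you descend the $\ov{k}$-morphism to a finite separable extension and view the resulting variety as a $k$-scheme via restriction of scalars, just as the paper does. Your bookkeeping on connectedness and the decomposition of $\ov{T}$ into Galois conjugates of $\ov{T}_0$ is in fact slightly more precise than the paper's phrasing (which loosely calls them ``copies of $T'$''), and your inclusion $\Image(\ov{g}_*)\subseteq\Image(f_*)$ is the clean statement needed.
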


\begin{proof}
As for (\ref{eq-c}), let $Z'\subset \ov{X}$ be a closed subvariety.
The Galois orbit $Z''\subset \ov{X}$ of $Z'$ under the $G$-action on $\ov{X}$ descends to a closed subvariety $Z\subset X$.
Since $Z'\subset Z''=\ov{Z}$, we have 
\begin{align*}
\Image\left(H^j_{Z'}(\ov{X},\Z_{\ell}(m))\rightarrow H^j(\ov{X},\Z_{\ell}(m))\right)
\subset \Image\left(H^j_{\ov{Z}}(\ov{X},\Z_{\ell}(m))\rightarrow H^j(\ov{X},\Z_{\ell}(m))\right).
\end{align*}
This proves (\ref{eq-c}).
 
As for (\ref{eq-sc}), let $f'\colon T'\rightarrow \ov{X}$ be a morphism of smooth projective connected $\ov{k}$-varieties.
Then $f'$ descends to a morphism of smooth projective connected $K$-varieties $f''\colon T\rightarrow X_K$ for some separable finite extension $K/k$, which may be regarded as a morphism of smooth projective connected $k$-varieties. Letting $\pi\colon X_K\rightarrow X$ be the natural projection, 
the composition \[f\colon T\xrightarrow{f''} X_K\xrightarrow{\pi} X\]
is then a morphism of smooth projective connected $k$-varieties.
Now $\ov{T}=T\times_{k}\ov{k}$ (resp. $\ov{X_K}=X_K\times_{k}\ov{k}$) is the disjoint union of $[K:k]$ copies of $T'$ (resp. $\ov{X}$), and the base change $\ov{T}\rightarrow \ov{X}$ of $f$ may be identified with the composition
\[
\coprod_{n=1}^{[K:k]} T' \xrightarrow{\coprod i_n\circ f'}  \coprod_{n=1}^{[K:k]} \ov{X}\xrightarrow{\coprod \id} \ov{X}.
\]
Hence we have
\begin{align*}
&\Image\left(f'_*\colon H^{j-2r}(T',\Z_\ell(m-c))\rightarrow H^j(\ov{X},\Z_\ell(m))\right)\\
&=\Image\left(f_*\colon H^{j-2r}(\ov{T},\Z_\ell(m-c))\rightarrow
H^j(\ov{X},\Z_\ell(m))\right).
\end{align*}
This proves (\ref{eq-sc}).
\end{proof}

We conclude this section by refining \cref{lem-cylinder}. This result shows an analogy between strong coniveau and algebraic equivalence and it motivates the proof of \cref{thm-csc}; see also \cref{lem-cylindercycles}.

\begin{lem}\label{rem-cylinder}
For all $i\geq 1$,
we have
\[
\widetilde{N}^{i-1} H^{2i-1}(\ov{X},\Z_\ell(i)) = \sum_{(C,\Gamma)} \Image\left(\Gamma_* \colon H_1(\ov{C},\Z_\ell)\rightarrow H^{2i-1}(\ov{X},\Z_\ell(i))\right),
\]
where $(C,\Gamma)$ runs over all pairs, where $C$ is a smooth projective geometrically connected $k$-curve and $\Gamma\in CH^i(C\times X)$.
\end{lem}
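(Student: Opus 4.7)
The inclusion $\supseteq$ is immediate: for any pair $(C,\Gamma)$ as in the statement, the base-changed pair $(\ov{C},\ov{\Gamma})$ over $\ov{k}$ satisfies the hypotheses of \cref{lem-cylinder}, whence $\Image(\Gamma_{*})\subseteq \widetilde{N}^{i-1}H^{2i-1}(\ov{X},\Z_{\ell}(i))$.

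For the reverse inclusion, \cref{lem-cscgalois} reduces the task to showing that, for each $k$-morphism $f\colon T\to X$ of smooth projective $k$-varieties with $T$ connected of dimension $d-i+1$, the image of $f_{*}\colon H^{1}(\ov{T},\Z_{\ell}(1))\to H^{2i-1}(\ov{X},\Z_{\ell}(i))$ is contained in the right-hand side. The plan is to adapt the Picard-variety argument of \cref{lem-cylinder} to the $k$-rational setting. Set $K:=H^{0}(T,\mathcal{O}_{T})$, a finite separable extension of $k$, so that $T$ is geometrically connected when regarded as a $K$-variety. Let $A:=\Pic^{0}_{T/k}$, an abelian variety over $k$; concretely, $A\simeq \on{Res}_{K/k}(\Pic^{0}_{T/K})$ via the usual Weil-restriction description of the Picard scheme. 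The universal line bundle on $A\times_{k} T$ gives a class $\mathcal{P}\in CH^{1}(A\times_{k} T)$ whose base change to $\ov{k}$ is the direct sum of Poincar\'e bundles on the pairs $(\Pic^{0}_{T/K})_{\sigma}\times_{\ov{k}} T_{\sigma}$ indexed by the geometric components of $\ov{T}$. It follows that $\mathcal{P}_{*}\colon H_{1}(\ov{A},\Z_{\ell})\xrightarrow{\sim} H^{1}(\ov{T},\Z_{\ell}(1))$ is a $G$-equivariant isomorphism, and hence the composition $\Gamma_{f}\circ\mathcal{P}\in CH^{i}(A\times_{k} X)$ induces a map whose image equals $\Image(f_{*})$.

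To conclude, apply iterated Bertini over $k$ — using Poonen's theorem when $k$ is finite, together with a refinement ensuring geometric connectivity of successive hyperplane sections — to obtain a smooth projective geometrically connected $k$-curve $C$ and a morphism $\iota\colon C\to A$ such that $\iota_{*}\colon H_{1}(\ov{C},\Z_{\ell})\twoheadrightarrow H_{1}(\ov{A},\Z_{\ell})$ is surjective. Taking $\Gamma:=(\Gamma_{f}\circ\mathcal{P})\circ[\Gamma_{\iota}]\in CH^{i}(C\times_{k} X)$ yields a pair of the required form with $\Image(\Gamma_{*})=\Image(f_{*})$. The main obstacle in this argument is the construction of the $k$-rational class $\mathcal{P}$ when $T$ is not geometrically connected, which rests on the representability of $\Pic^{0}_{T/k}$ via Weil restriction and on correctly identifying the base change of the universal line bundle; a secondary point over finite base fields is to exhibit the geometrically connected curve $C$ in $A$ with the desired surjectivity on $H_{1}$.
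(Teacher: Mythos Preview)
Your approach---descending the Picard-variety argument of \cref{lem-cylinder} to $k$---is natural but has a genuine gap that you flag without resolving. The issue is not the representability of $\Pic^{0}_{T/K}$ (this scheme always exists for projective $T$) but the existence of a Poincar\'e line bundle on $\Pic^{0}_{T/K}\times_{K} T$: the obstruction is a Brauer class on $\Pic^{0}_{T/K}$ killed by the index of $T$, so $\mathcal{P}$ exists once $T$ carries a zero-cycle of degree $1$ over $K$, but for an arbitrary base field there is no reason this should hold. Over a finite field the Lang--Weil estimates supply such a zero-cycle (cf.\ the proof of \cref{lem-abc1}), so your argument can be completed there---but the lemma is stated for general $k$. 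Without $\mathcal{P}$ you have no $k$-rational correspondence realizing the isomorphism $H_{1}(\ov{A},\Z_{\ell})\simeq H^{1}(\ov{T},\Z_{\ell}(1))$, and the argument stalls.

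The paper takes a different route that avoids the Poincar\'e bundle entirely. Starting (via \cref{lem-cylinder} and the descent technique of \cref{lem-cscgalois}) from a smooth projective connected $k$-curve $S$ with $e$ geometric components, one observes that the product $\ov{S}_{1}\times\cdots\times\ov{S}_{e}\subset\Sym^{e}(\ov{S})$ is $G$-stable and hence descends to a smooth projective \emph{geometrically connected} $k$-variety $\Sym^{\Delta}(S)$. The universal family on the Hilbert scheme $\Sym^{e}(S)$ is unconditionally defined over $k$ and furnishes a correspondence $\Sym^{\Delta}(\Gamma)\in CH^{i}(\Sym^{\Delta}(S)\times X)$ whose action on $H_{1}$ recovers $\Image(\Gamma_{*})$; one then cuts down to a curve by Bertini (or Poonen over finite fields) exactly as you propose. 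This symmetric-power trick, borrowed from Achter--Casalaina-Martin--Vial, substitutes an always-available universal object for the possibly nonexistent Poincar\'e bundle.
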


\begin{proof}
We learned the idea of using symmetric powers of curves from \cite[Section 3.4]{achter2019parameter}. Let $S$ be a smooth projective connected  $k$-curve and $\Gamma\in CH^i(S\times X)$. (We do not assume that $S$ is geometrically connected.)
Let $\ov{S_1},\cdots, \ov{S_e}$ be the connected components of $\ov{S}$ and let
\[
\Sym^{\Delta}(\ov{S})=\ov{S_1}\times \cdots \times \ov{S_e} \subset \Sym^e(\ov{S}).
\]
Note that $\Sym^e(\ov{S})$ is smooth and $\Sym^{\Delta}(\ov{S})$ is a connected component of $\Sym^e(\ov{S})$.
Since $\Sym^{\Delta}(\ov{S})$ is fixed by $G$, it descends to a geometrically connected component $\Sym^{\Delta}(S)$ of $\Sym^e(S)$. 

Let $p\colon \Sym^e(S)\times S\rightarrow \Sym^e(S)$ and $q\colon \Sym^e(S)\times S\rightarrow S$ be the projections. We view $\Sym^e(S)$ as a closed subscheme of the Hilbert scheme of $S$, and write $U\subset \Sym^e(S)\times S$ for the universal family. Let
\[
\Sym^e(\Gamma)\coloneqq 
(p|_U\times \id_X)_*(q|_U\times \id_X)^*\Gamma \in CH^i(\Sym^e(S)\times X),
\]
and let $\Sym^{\Delta}(\Gamma)\in CH^i(\Sym^{\Delta}(S)\times X)$ be the restriction of $\Sym^e(\Gamma)$ over $\Sym^{\Delta}(S)$.
By the universal property, the choice of a closed point of degree $e$ on $S$ determines a morphism $f\colon S\rightarrow \Sym^{\Delta}(S)$, and we have 
\[\Sym^{\Delta}(\Gamma)_* \circ f_*=
(f^*\Sym^{\Delta}(\Gamma))_* = \Gamma_*\]
on $H_1(\ov{S},\Z_\ell)$.
It follows that
\begin{align*}
&\Image\left(\Gamma_* \colon H_1(\ov{S},\Z_\ell)\rightarrow H^{2i-1}(\ov{X},\Z_\ell(i))\right)\\
&\subset \Image\left(\Sym^{\Delta}(\Gamma)_* \colon H_1(\Sym^{\Delta}(\ov{S}),\Z_\ell)\rightarrow H^{2i-1}(\ov{X},\Z_\ell(i))\right).
\end{align*}

Finally, let $C\subset \Sym^{\Delta}(S)$ be a smooth complete intersection $k$-curve. (Such $C$ exists, by Bertini's theorem when the field $k$ is infinite, and by Poonen's theorem \cite{poonen2004bertini} when $k$ is finite.) In particular,
$C$ is geometrically connected. 
Moreover, letting $i\colon C\rightarrow \Sym^{\Delta}(S)$ be the inclusion, the Lefschetz hyperplane section theorem \cite[Theorem VI.7.1]{milne1980etale} implies that the homomorphism 
\[
i_*\colon H_1(\ov{C},\Z_\ell)\to  H_1(\Sym^{\Delta}(\ov{S}),\Z_\ell)
\]
is surjective. Now the pull-back $i^*\Sym^{\Delta}(\Gamma)\in CH^i(C\times X)$
yields a homomorphism
\[
(i^*\Sym^{\Delta}(\Gamma))_*=\Sym^{\Delta}(\Gamma)_*\circ i_* \colon H_1(\ov{C},\Z_\ell)\rightarrow H^{2i-1}(\ov{X},\Z_\ell(i))
\]
whose image coincides with that of $\Sym^{\Delta}(\Gamma)_*$ and contains that of $\Gamma_*$.
Hence we may replace the pair $(S,\Gamma)$ in \cref{lem-cylinder} by the pair $(C,i^* \Sym^{\Delta}\Gamma)$.
This completes the proof.
\end{proof}

\section{A relative Wu's Theorem and Proof of Theorem \ref{benoist-ottem-4.3}}\label{sec9}

The purpose of this section is to prove \Cref{benoist-ottem-4.3}, which extends \cite[Theorem 4.3, Proposition 4.6]{benoist2021coniveau} to  all algebraically closed fields of characteristic different from $2$. 

Let $k$ be a field, $X$ be a smooth $k$-variety and $E$ be a vector bundle over $X$. If $\iota\colon X\hookrightarrow Y$ is a codimension $r$ closed embedding of smooth $k$-varieties, we denote by $s_{X/Y}\in H^{2r}_X(Y,\mu_{\ell}^{\otimes r})$ the cycle class of $X$, that is, the image of $1\in H^0(X,\Z/\ell)$ under the Gysin homomorphism
\[\phi_{X/Y}\colon H^*(X,\Z/\ell)\to H^{*+2r}_X(Y,\mu_{\ell}^{\otimes r}).\]

Assume now that $\ell=2$. We define the Steenrod squares 
\[\on{Sq}^j\colon H^*_X(Y,\Z/2)\to H^{*+j}_X(Y,\Z/2)\]
 as in \cite[Definition 5.1]{feng2020etale}. The Steenrod squares just considered satisfy all the basic properties of the topological Steenrod squares: compatibility with respect to pull-backs, Cartan formula, Adem relations, and for all $\alpha\in H^i_X(Y,\Z/2)$ the identities $\on{Sq}^0(\alpha)=\alpha$, $\on{Sq}^i(\alpha)=\alpha^2$ and $\on{Sq}^j(\alpha)=0$ for all $j>i$; see e.g. \cite[3.4]{feng2020etale}. (Strictly speaking, \cite[3.4]{feng2020etale} is stated for topological spaces. The point is that, combining this with Friedlander's result \cite[Proposition 3.5]{feng2020etale}, all of these properties immediately extend to $X$.)
 
 The last property implies that the total Steenrod square 
 \[\on{Sq}  \colon H^*_X(Y,\Z/2)\to H^*_X(Y,\Z/2),\qquad \on{Sq}(\alpha)=\sum_{j=0}^{\infty}\on{Sq}^j(\alpha),\]
 is well defined, and the Cartan property implies that it is a homomorphism of graded rings.  
 
 Following \cite{urabe1996bilinear} and \cite[Definition 5.2]{feng2020etale}, for all $j\geq 0$ we define the $j$-th Stiefel-Whitney class of $E$ by 
\[w_j(E):=\phi_{X/E}^{-1}(\on{Sq}^js_{X/E}).\]
The total Stiefel-Whitney class of $E$ is \[w(E):=\sum_{j=0}^{\infty} w_j(E).\]
If $E$ is a vector bundle on $X$ we denote by $[E]$ its class in the Grothendieck group $K_0(X)$. Every class $\alpha\in K_0(X)$ can be written as $\alpha=[E]-[E']$, where $E$ and $E'$ are vector bundles on $X$; we let $w(\alpha) := w(E)w(E')^{-1}$ be the total Stiefel–Whitney class of $\alpha$.

The following is a relative version of Wu's theorem. This complements \cite[Proposition 3.2]{benoist2021coniveau}, where the setting of smooth manifolds is considered. 

\begin{prop}\label{wu-theorem}
Let $k$ be an algebraically closed field of characteristic different from $2$. Let $f \colon  Y \to X$ be a  morphism of smooth projective $k$-varieties, and $N_f:=[f^*T_X]-[T_Y]\in K_0(Y)$ be the virtual normal bundle of $f$. For all $\beta \in H^*(Y,\Z/2)$, we have 
\[\on{Sq}(f_*\beta) = f_*(\on{Sq}(\beta) \cup w(N_f))\]
in $H^*(X,\Z/2)$.
\end{prop}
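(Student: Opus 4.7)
The plan is to factor $f$ as a closed immersion followed by a smooth projection from a product, and to prove the Wu identity in each of the two cases separately. Since $Y$ is projective, choose a closed immersion $j\colon Y\hookrightarrow \P^N$ and write $f=\pi\circ\iota$ with $\iota=(f,j)\colon Y\hookrightarrow X\times \P^N$ and $\pi\colon X\times \P^N\to X$ the projection. The tangent/normal sequences yield $N_f=\iota^*N_\pi+N_\iota$ in $K_0(Y)$, so $w(N_f)=\iota^*w(N_\pi)\cdot w(N_\iota)$, and the projection formula then reduces the identity for $f$ to those for $\iota$ and $\pi$:
\[f_*\bigl(\on{Sq}(\beta)\cdot w(N_f)\bigr)=\pi_*\bigl(\iota_*(\on{Sq}(\beta)\cdot w(N_\iota))\cdot w(N_\pi)\bigr)=\pi_*\bigl(\on{Sq}(\iota_*\beta)\cdot w(N_\pi)\bigr)=\on{Sq}(f_*\beta).\]

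For the closed immersion $\iota$, I would reduce to the case of the zero section $\sigma\colon Y\hookrightarrow N$ of a vector bundle via deformation to the normal cone: the flat family $W\to \A^1$ with fiber $X\times \P^N$ over $1$ and $N_\iota$ over $0$ restricts the embedding $\iota$ and its zero-section analogue to the two closed fibers, and by proper/smooth base change in étale cohomology, together with compatibility of Steenrod operations, Gysin maps and Stiefel--Whitney classes with specialization, the identity propagates across the family. For the zero section itself, $\sigma_*\beta=\phi_{Y/N}(\beta)=\beta\cdot s_{Y/N}$ by the $H^*(Y,\Z/2)$-module structure on $H^*_Y(N,\Z/2)$, so the defining identity $\on{Sq}(s_{Y/N})=\phi_{Y/N}(w(N))$ together with the Cartan formula gives the desired equation directly.

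For the projection $\pi$, one has $N_\pi=-[q^*T_{\P^N}]$ with $q\colon X\times \P^N\to \P^N$, and hence $w(N_\pi)=q^*(1+h)^{-(N+1)}$ where $h\in H^2(\P^N,\Z/2)$ is the hyperplane class (using the Euler sequence). By Künneth and the Cartan formula both sides are $H^*(X,\Z/2)$-linear, so it suffices to check $\gamma=\alpha\otimes h^i$. The left side is $\delta_{i,N}\on{Sq}(\alpha)$. Using $\on{Sq}(h)=h+h^2$ (which follows from $\on{Sq}^2h=h^2$ and the vanishing of the Bockstein on $h$) and hence $\on{Sq}(h^i)=h^i(1+h)^i$, the right side equals
\[\on{Sq}(\alpha)\cdot\bigl[\text{coefficient of }h^N\text{ in }h^i(1+h)^{i-N-1}\bigr]=\binom{i-N-1}{N-i}\on{Sq}(\alpha).\]
For $i=N$ this is $\binom{-1}{0}=1$, matching the left side; for $0\leq i<N$ the identity $\binom{-m-1}{m}=(-1)^m\binom{2m}{m}$ with $m=N-i\geq 1$, together with the classical fact that $\binom{2m}{m}$ is even for $m\geq 1$, gives $0$, again matching.

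The main obstacle lies in the deformation-to-the-normal-cone step, where one must verify the compatibility of étale Steenrod operations, Gysin pushforwards and Stiefel--Whitney classes with specialization along $W\to \A^1$. This is standard but requires careful invocation of proper and smooth base change for $\ell=2$ étale cohomology, together with the naturality of $w_j$ under pullback of vector bundles between smooth varieties; everything else is a combination of the projection formula, Cartan's formula, and a mod-$2$ combinatorial identity.
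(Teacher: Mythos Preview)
Your proposal is correct and takes essentially the same approach as the paper: factor $f$ as a closed immersion followed by a projection $X\times\P^N\to X$, reduce the immersion case via deformation to the normal cone to the zero section of a vector bundle (where the definition of $w$ together with Cartan's formula gives the identity directly), and handle the projection by an explicit K\"unneth computation with $\on{Sq}(h)=h(1+h)$ and $w(T_{\P^N})=(1+h)^{N+1}$. The deformation step you flag as the ``main obstacle'' is precisely what the paper resolves by drawing the specialization diagram for $H^*_Y(-,\Z/2)$ and invoking \cite[Lemma~6.7]{feng2020etale} for the compatibility of the normal bundles across the family; your binomial identity $\binom{-m-1}{m}\equiv\binom{2m}{m}\equiv 0\pmod 2$ for $m\geq 1$ makes the projection computation more explicit than the paper's terse version.
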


\begin{proof}
    The following argument is similar to the proof of the Grothendieck-Riemann-Roch theorem. Let $g\colon Z\to Y$ be another morphism of smooth projective varieties. Then
    \[N_{f\circ g}=(f\circ g)^*[T_X]-[T_Z]=g^*(f^*[T_X]-[T_Y])+g^*[T_Y]-[T_X]=g^*N_f+N_g.\]
    Since $w$ is multiplicative, we have \[w(N_{f\circ g})=w(g^*N_f)\cup w(N_g)=g^*w(N_f)\cup w(N_g).\]
    Therefore, if the conclusion of the proposition holds for $f$ and $g$, then it holds for $f\circ g$. Indeed:
    \begin{align*}
    (f\circ g)_*(\on{Sq}(\beta)\cup w(N_{f\circ g})) &= f_*g_*(\on{Sq}(\beta)\cup g^*w(N_f)\cup w(N_g))\\ &=f_*(g_*(\on{Sq}(\beta)\cup w(N_g))\cup w(N_f)) \\
    &=f_*(\on{Sq}(g_*\beta)\cup w(N_f)) \\
    &=\on{Sq}(f_*g_*(\beta)) \\
    &=\on{Sq}((f\circ g)_*\beta).
    \end{align*}
    In the second equality we have used the projection formula in \'etale cohomology, and in the third and fourth equality we have used the fact that the conclusion holds for $f$ and $g$.
    
    Every morphism of smooth projective varieties factors as the composition of a closed embedding and a projection map $X\times \P^n_k\to X$. We may thus assume that $f$ is either a closed embedding or the first projection $X\times \P^n_k\to X$.
    
    Suppose first that $f\colon X\times \P^n_k\to X$ is the first projection. Let $\pi\colon X\times \P^n_k\to \P^n_k$ be the second projection. Then \[N_f=f^*[T_X]-[f^*T_X\oplus \pi^*T_{\P^n_k}]=f^*[T_X]-f^*[T_X]-\pi^*[T_{\P^n_k}]=-\pi^*[T_{\P^n_k}].\]
    We have $H^*(\P^n_k,\Z/2)=(\Z/2)[h]/(h^{n+1})$, where $|h|=2$, and
    \[w(T_{\P^n_k})=(1+h)^{n+1}\in H^*(T_{\P^n_k},\Z/2).\]
    Moreover, $H^*(X\times \P^n_k,\Z/2)\simeq H^*(X)[h]/(h^{n+1})$, hence there exist $b_i \in H^*(X)$ such that
    \[\beta=\sum_{i=0}^nb_ih^i.\]
    We have $f_*(\beta)=b_n$, hence $\on{Sq}(f_*\beta)=\on{Sq}(b_n)$. Moreover, $\on{Sq}(\beta)=\sum_i\on{Sq}(b_i)\on{Sq}(h)^i$ and $\on{Sq}(h)=h+h^2$, hence
    \[f_*(\on{Sq}(\beta)\cup w(N_f))=f_*(\sum_{i=0}^n \on{Sq}(b_i)(h+h^2)^i\cup (1+h)^{n+1})=\on{Sq}(b_n)=\on{Sq}(f_*\beta).\]
    Suppose now that $f\colon Y\to X$ is a closed embedding of codimension $r$. In this case, $N_f=[N_{Y/X}]$ is the class of the normal bundle of $Y$ inside $X$. Letting $\mc{X}\to \A^1_k$ be the deformation of $Y\hookrightarrow X$ to the normal bundle $N_{Y/X}$, we have a commutative diagram
    \[
    \begin{tikzcd}
    Y \arrow[r, hook] \arrow[d, hook]  & Y\times \A^1_k \arrow[d, hook]  & Y \arrow[l, hook] \arrow[d, hook]  \\
    N_{Y/X} \arrow[r, hook] & \mathcal{X} & \arrow[l, hook] X.   
    \end{tikzcd}
    \]
    Passing to \'etale cohomology gives
    \[
    \begin{tikzcd}
    H^*(Y,\Z/2)   & \arrow[l,swap,"\sim"] H^*(Y\times\A^1_k,\Z/2) \arrow[r,"\sim"] &  H^*(Y,\Z/2) \\
    H^*_Y(N_{Y/X},\Z/2) \arrow[u,"\wr"] & \arrow[l,swap,"\sim"] H^{*+2r}_{Y\times \A^1_k}(\mathcal{X},\Z/2) \arrow[r,"\sim"]  \arrow[u,"\wr"] &  H^{*+2r}_Y(X,\Z/2) \arrow[u,"\wr"]
    \end{tikzcd}
    \]
    where the vertical isomorphisms are Gysin maps. By \cite[Lemma 6.7]{feng2020etale}, the isomorphisms of the first row send $N_f=[N_{Y/X}]$ to $[N_{Y/N_{Y/X}}]$.
    
    Therefore, it is enough to prove the following: If $\phi\colon H^*(Y,\Z/2)\to H^*_Y(E,\Z/2)$ is the Gysin map associated to the zero-section of the vector bundle $\pi\colon E\to Y$, then
    \[\phi(\on{Sq}(\beta)\cup w(E))=\on{Sq}(\phi(\beta))\]
    for all $\beta\in H^*(Y,\Z/2)$. To prove this, recall that $\phi$ is an $H^*(Y,\Z/2)$-linear isomorphism given by pull-back and multiplication by a class $U$ such that $w(E)=\phi^{-1}(\on{Sq}(U))$, that is,
    \[\phi(\alpha)= \pi^*\alpha\cup U\]
    for all $\alpha\in H^*(Y,\Z/2)$. It follows that
    \begin{align*}
    \phi(\on{Sq}(\beta)\cup w(E))&= \pi^*(\on{Sq}(\beta)\cup \pi^*(w(E))\cup U\\
    &=\on{Sq}(\pi^*\beta)\cup \on{Sq}(U) \\ 
    &=\on{Sq}(\pi^*\beta\cup U)\\
    &=\on{Sq}(\phi(\beta)),
    \end{align*}
    as desired.
    \end{proof}

Using \Cref{wu-theorem}, we are able to extend the main results of \cite[\S 3]{benoist2021coniveau} to fields of positive characteristic. For all $n\geq 1$, we define \[S_n:=\on{Sq}^{2^n-1}\on{Sq}^{2^n-5}\cdots \on{Sq}^7\on{Sq}^3.\]

\begin{lem}\label{bo-31}
For all $j\geq 1$ and $1\leq i\leq 2^i-1$, there exists a mod $2$ cohomology operation $S$ such that $\on{Sq}^{2i-1}S_j=S\on{Sq}^1$.
\end{lem}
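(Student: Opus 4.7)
The plan is to apply the Adem relations in the mod-$2$ Steenrod algebra iteratively, peeling off the leftmost factor of $S_j$ and analyzing what survives. I read the stated bound $1\leq i\leq 2^i-1$ as $1\leq i\leq 2^j-1$; this is precisely the inequality $2i-1<2(2^j-1)$ that ensures the Adem relation applies to the leftmost pair $\on{Sq}^{2i-1}\on{Sq}^{2^j-1}$ appearing in $\on{Sq}^{2i-1}S_j$.

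First I would expand
\[\on{Sq}^{2i-1}\on{Sq}^{2^j-1}=\sum_{c\geq 0}\binom{2^j-c-2}{2i-1-2c}\on{Sq}^{2i+2^j-c-2}\on{Sq}^{c}\]
and reduce the binomial coefficients modulo $2$ via Lucas's theorem. Since $2i-1-2c$ has least significant binary digit $1$, while $2^j-c-2$ has least significant digit $0$ whenever $c$ is even, the even-$c$ coefficients vanish. Hence $\on{Sq}^{2i-1}S_j$ reduces to a sum of terms of the form $\on{Sq}^{\ast}\on{Sq}^{c}\on{Sq}^{2^j-5}\cdots\on{Sq}^{3}$ with $c$ odd.

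Next I would argue by induction on $j$. For the $c=1$ contribution, a single application of Adem gives $\on{Sq}^1\on{Sq}^{2^j-5}=\binom{2^j-6}{1}\on{Sq}^{2^j-4}\equiv 0\pmod 2$ when $j\geq 3$, so that term drops out; the cases $j=1,2$ are checked by hand. For odd $c\geq 3$, I would apply Adem to $\on{Sq}^{c}\on{Sq}^{2^j-5}$, again retain only the odd-$c$ contributions by the same Lucas argument, and invoke the inductive hypothesis on the resulting shorter composition $\on{Sq}^{c'}\on{Sq}^{2^j-9}\cdots\on{Sq}^{3}$. The principal obstacle is the combinatorial bookkeeping of the iterated Adem expansions: one must verify that at each stage the surviving terms either vanish or, after pushing the newly introduced $\on{Sq}^{c}$'s to the right, lie inductively in the right ideal generated by $\on{Sq}^1$. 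This parallels the argument of Benoist--Ottem for singular cohomology and transfers to the present setting because only the algebraic structure of the Steenrod algebra is used.
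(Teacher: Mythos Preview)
Your approach is correct and matches the paper's, which simply cites \cite[Lemma 3.1]{benoist2021coniveau}; your iterated-Adem argument with the Lucas parity observation (only odd $c$ survive in each expansion) is a faithful sketch of that proof. Note only that your induction is really on the length of the tail $\on{Sq}^{4k-1}\on{Sq}^{4k-5}\cdots\on{Sq}^3$ rather than on $j$ itself, since peeling off $\on{Sq}^{2^j-1}$ from $S_j$ does not leave $S_{j-1}$; once phrased that way the bookkeeping you describe goes through.
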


\begin{proof}
   This follows from the Adem relations as in the proof of \cite[Lemma 3.1]{benoist2021coniveau}.
\end{proof}

\begin{prop}\label{benoist-ottem-3.?}
Let $f\colon Y\to X$ be a morphism of smooth projective varieties over an algebraically closed field $k$ of characteristic different from $2$. 

(a) For all $j\geq 1$ and all $\beta \in H^*(Y,\Z/2)$ such that $\on{Sq}^1(\beta)=0$, we have
\[S_j(f_*\beta)=f_*S_j(\beta).\]

(b) Let $\beta \in H^m(Y,\Z/2)$ be such that $\on{Sq}^1(\beta)=0$. Then for all $j\geq \max(m,2)$, we have $S_j(f_*\beta)=0$ in $H^*(X,\Z/2)$.
\end{prop}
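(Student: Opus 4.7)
My plan is to prove (a) by induction on $j$, with (b) following from (a) via an unstable Steenrod degree count. For (a), the case $j = 1$ is trivial (taking $S_1 = \id$). For the inductive step, writing $S_j = \on{Sq}^{2^j-1} \circ S_{j-1}$ and applying the inductive hypothesis $S_{j-1}(f_*\beta) = f_*(S_{j-1}\beta)$ reduces the assertion to
\[
\on{Sq}^{2^j-1}(f_*\gamma) = f_*(\on{Sq}^{2^j-1}\gamma), \qquad \gamma := S_{j-1}\beta.
\]
By \cref{bo-31} applied to $\beta$, the class $\gamma$ satisfies $\on{Sq}^a\gamma = 0$ for every odd $a$ with $1 \leq a \leq 2^j - 3$ (in particular $\on{Sq}^1\gamma = 0$). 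Expanding the left-hand side via the relative Wu formula (\cref{wu-theorem}) gives
\[
\on{Sq}^{2^j-1}(f_*\gamma) = f_*(\on{Sq}^{2^j-1}\gamma) + \sum_{\substack{a+b = 2^j-1 \\ b \geq 1}} f_*\bigl(\on{Sq}^a\gamma \cdot w_b(N_f)\bigr),
\]
so it suffices to show the cross terms vanish. The terms with odd $a$ die because $\on{Sq}^a\gamma = 0$ by \cref{bo-31}. The terms with even $a$ (hence odd $b$) die because all odd Stiefel-Whitney classes of any algebraic vector bundle---and hence of the virtual bundle $N_f$---vanish. Indeed, by the splitting principle it suffices to check this for a line bundle $L$, where the Thom class $s_{X/L}$ lifts to integral \'etale coefficients (so $\on{Sq}^1 s = 0$), and the unstable condition $\on{Sq}^j s = 0$ for $j \geq 3$ forces $w_j(L) = 0$ for every odd $j$.

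For (b), combine (a) with a degree count. The class $S_{j-1}\beta$ has cohomological degree $m + 2^j - j - 2$; for $j = m$ with $m \geq 2$ one has $|S_{m-1}\beta| = 2^m - 2 < 2^m - 1$, so the unstable Steenrod condition $\on{Sq}^n\alpha = 0$ for $n > |\alpha|$ gives $S_m\beta = 0$, and iterating the outermost $\on{Sq}^{2^{k}-1}$ for $k > m$ yields $S_j\beta = 0$ for all $j \geq m$. The exceptional cases $m \in \{0,1\}$ at $j = 2$ are immediate from $\on{Sq}^3$ vanishing on cohomology of degree $\leq 2$. Hence $S_j(f_*\beta) = f_*(S_j\beta) = 0$.

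The principal obstacle is establishing the identity $w_{\mathrm{odd}}(N_f) = 0$; without this vanishing, the even-$a$ cross terms in the induction step would require a more elaborate cancellation argument combining Wu's classical identity $w_{2c+1} = \on{Sq}^1 w_{2c} + w_1 w_{2c}$, the Leibniz rule, and \cref{bo-31}. Fortunately, this vanishing holds canonically in the algebraic/\'etale setting via the integral lift of the Thom class and the splitting principle, and once granted, the remaining pieces of (a) are a mechanical combination of \cref{wu-theorem} and \cref{bo-31}.
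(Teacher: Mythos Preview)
Your proof is correct and follows essentially the same approach as the paper's. The paper's proof of (a) singles out the same key fact---that odd \'etale Stiefel--Whitney classes vanish for algebraic vector bundles (and hence for the virtual bundle $N_f$)---and then simply cites Benoist--Ottem's Proposition~3.3 for the remaining combinatorics; your inductive argument via \cref{wu-theorem} and \cref{bo-31} spells out exactly those details. Likewise your degree count for (b) is the content of Benoist--Ottem's Proposition~3.4, which the paper invokes without further comment.
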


\begin{proof}
    (a) For every smooth $k$-variety $B$, every vector bundle $E\to B$ and every $i\geq 0$ the class $w^{2i+1}(E)\in H^{2i+1}(B,\Z/2)$ is trivial. Since $w$ is multiplicative, $w^{2i+1}(N_f)=0$ for all $i\geq 0$. The result now follows as in the proof of \cite[Proposition 3.3]{benoist2021coniveau}.
    
    (b) Follows from (a) as in the proof of \cite[Proposition 3.4]{benoist2021coniveau}.
\end{proof}

If $X$ is a (simplicial) scheme of finite type over an algebraically closed field $k$ of characteristic different from $2$, we write $\pi_2$ for the homomorphism $H^i(X,\Z_2)\to H^i(X,\Z/2)$ given by reduction modulo $2$.

\begin{prop}\label{benoist-ottem-3.5}
    Let $X$ be a smooth projective variety over an algebraically closed field $k$ of characteristic different from $2$, let $\alpha\in H^m(X,\Z_2)$, and let $c,j$ be integers such that $m\leq 2c+j$, $j\geq 2$ and $S_j(\pi_2(\alpha))\neq 0$. Then $\alpha$ has strong coniveau $<c$.
\end{prop}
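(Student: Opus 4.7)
The plan is to argue by contrapositive: assuming that $\alpha$ has strong coniveau $\geq c$, namely $\alpha \in \widetilde{N}^c H^m(X,\Z_2)$, I would show $S_j(\pi_2(\alpha)) = 0$, contradicting the hypothesis. The entire argument is built on top of Proposition \ref{benoist-ottem-3.?}(b), whose availability over fields of positive characteristic is precisely what the relative Wu theorem (Proposition \ref{wu-theorem}) of this section makes possible.

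First, I would unwind the strong coniveau assumption at the $\Z_2$-level: write $\alpha = \sum_i f_{i,*}\beta_i$ as a finite sum, where each $f_i \colon T_i \to X$ is a morphism of smooth projective $k$-varieties with $T_i$ of dimension $\dim X - r_i$ for some $r_i \geq c$, and $\beta_i \in H^{m-2r_i}(T_i,\Z_2)$. Reducing modulo $2$ and using compatibility of $\pi_2$ with proper pushforward gives
\[
\pi_2(\alpha) \;=\; \sum_i f_{i,*}\pi_2(\beta_i) \quad\text{in } H^m(X,\Z/2).
\]

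Next, I would verify the hypotheses of Proposition \ref{benoist-ottem-3.?}(b) for each summand. Since $\on{Sq}^1$ is the Bockstein associated to $0 \to \Z/2 \to \Z/4 \to \Z/2 \to 0$, and each $\pi_2(\beta_i)$ lifts to $H^{m-2r_i}(T_i,\Z/4)$ (being the mod-$2$ reduction of an integral class, which certainly lifts to a $\Z/4$-coefficient class), we have $\on{Sq}^1(\pi_2(\beta_i)) = 0$. The degree bound also works out: from $m \leq 2c + j$ and $r_i \geq c$, we get $m - 2r_i \leq m - 2c \leq j$, so combined with $j \geq 2$ we have $j \geq \max(m-2r_i, 2)$. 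Proposition \ref{benoist-ottem-3.?}(b) then gives $S_j(f_{i,*}\pi_2(\beta_i)) = 0$ for each $i$, and summing yields $S_j(\pi_2(\alpha)) = 0$, the desired contradiction.

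The proof is essentially a direct application of the tools developed earlier in the section, and I do not expect any technical obstacle here; the real work was done in proving Proposition \ref{wu-theorem} and extracting Proposition \ref{benoist-ottem-3.?}. The only small point to check carefully is the book-keeping: one should make sure the strong coniveau decomposition is used in its general form (with $r_i \geq c$ rather than $r_i = c$) so that the bound $m - 2r_i \leq j$ holds for every summand, and this is immediate from the inequality $m \leq 2c+j$ together with $r_i \geq c$.
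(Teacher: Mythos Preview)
Your proposal is correct and follows essentially the same contrapositive approach as the paper's proof, reducing to Proposition~\ref{benoist-ottem-3.?}(b) after checking $\on{Sq}^1(\pi_2(\beta_i))=0$ via the Bockstein description and verifying the degree bound $j\geq \max(m-2r_i,2)$. The only cosmetic difference is that the paper uses a single (not necessarily connected) source $Y$ of dimension $d-c$ in place of your finite sum with $r_i\geq c$; both formulations are equivalent by the remark following the definition of $\widetilde{N}^c$.
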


\begin{proof}
    Let $d=\dim(X)$. We have $\on{Sq}^1(\pi_2(\alpha))=0$. Suppose that $\alpha$ has strong coniveau $\geq c$, that is, that there exist a morphism $f\colon Y\to X$, where $Y$ is a smooth projective variety of dimension $d-c$, and $\beta\in H^{m-2c}(Y,\Z_2)$ such that $f_*(\beta)=\alpha$. Then $\on{Sq}^1(f_*(\beta))=0$ and $f_*(\pi_2(\beta))=\pi_2(\alpha)$. Since $j\geq\max(m-2c,2)$, by \Cref{benoist-ottem-3.?} (b) we see that $S_j(\pi_2(\alpha))=S_j(f_*\pi_2(\beta))=0$, which contradicts the assumption. Therefore $\alpha$ has strong coniveau $<c$, as desired.
\end{proof}

If $\mc{G}$ is a linear algebraic group over a field $k$, we write $B\mc{G}$ for its classifying space: it is the simplicial scheme over $k$ defined in \cite[Example 1.2]{friedlander1982etale}. If $\mc{G}$ is a split reductive group over $\Z$ and $\ell$ is a prime number invertible in $k$, then $H^*(B\mc{G}_k,\Z/\ell)\simeq H^*_B(B\mc{G}(\C);\Z/\ell)$ and $H^*(B\mc{G}_k,\Z_\ell)\simeq H^*_B(B\mc{G}(\C);\Z)\otimes_{\Z}\Z_\ell$ by \cite[Proposition 8.8]{friedlander1982etale}.

\begin{proof}[Proof of \cref{benoist-ottem-4.3} (1) and (2)]
    (1) Let $m\geq 1$ be an integer. By \cite[Theorem 1.3 and \S 1.1]{ekedahl2009approximating}, there exists a smooth projective $k$-variety $Z_m$ with torsion canonical bundle and a $k$-morphism $\varphi'\colon Z_m\to B(\Z/2\Z)\times B\mathbb{G}_{\on{m}}$ such that, letting  \[\varphi\colon Z_m\to B(\Z/2\Z)\times B\mathbb{G}_{\on{m}}\to B(\Z/2\Z)\]
    the composition of $\varphi'$ with the first projection, the induced map
    \[\varphi^*\colon H^*(B(\Z/2\Z)\times B\mathbb{G}_{\on{m}},\Z_2)\to H^*(Z_m,\Z_2)\]
    is injective in degrees $\leq m$. 
    
    Define $s:= l-2c+1$ and fix $m\geq 2^{s+1}$. By \cite[Lemma 4.1]{benoist2021coniveau}, there exists $\zeta \in H^s(B(\Z/2\Z),\Z/2\Z)$ such that $S_s\on{Sq}^1(\zeta)=0$. We let $V:=Z_m^s$, $\varphi^s\colon Z_m^s\to B(\Z/2\Z)^s$ be the product of $s$ copies of $\varphi$, and $\xi:=(\varphi^s)^*(\zeta)$. By K\"unneth's theorem, $(\varphi^s)^*$ is injective in degrees $\leq m$, hence $S_l\on{Sq}^1(\xi)=(\varphi^s)^*S_l\on{Sq}^1(\zeta)\neq 0$.
    
    Let $T$ be a smooth projective $k$-variety of dimension $c-1$. Define $X:=V\times T$ and set $\alpha:= \on{pr}_1^*\tilde{\beta}(\xi)\cup \on{pr}^*_2(\lambda)$, where $\lambda \in H^{2c-2}(T,\Z_2)$ is the class of a point $t\in T(k)$ and $\on{pr}_1\colon X\to V$ and $\on{pr}_2\colon X\to T$ are the projections. 
    
    Note that $\on{Sq}^i(\pi_{2}(\lambda))=0$ for all $i>0$ by degree reasons and recall that $\on{Sq}^1=\pi_2\circ \tilde{\beta}$. Therefore, by Cartan's formula 
    \[S_l(\pi_2(\alpha))=\on{pr}^*_1S_l\on{Sq}^1(\xi)\cup \on{pr}^*_2(\pi_2(\lambda)).\] 
    We deduce from K\"unneth's formula that $S_l(\pi_2(\alpha))\neq 0$, hence by \Cref{benoist-ottem-3.5} the class $\alpha$ has strong coniveau $<c$. On the other hand, since $\tilde{\beta}(\xi)$ is torsion, it has coniveau $\geq 1$ by \cite[Lemme 3.12]{kahn2012classes}. Since $\alpha$ is the push-forward of $\tilde{\beta}(\xi)$ by the codimension $c-1$ closed immersion $V\times\{t\}\to V\times T$, it has coniveau $\geq c$. To find $X$ with torsion canonical bundle, it suffices to pick $T$ with torsion canonical bundle. If $c\geq 2$, we may now find $X$ rational following the procedure of \cite[Theorem 4.3(ii)]{benoist2021coniveau}.

    (2) We claim that there exist a smooth projective fourfold $Z$ over $k$ and a $2$-torsion class $\sigma \in H^3(Z,\Z_2)$ such that the reduction modulo $2$ of $\sigma^2$ is nonzero. Indeed, if $k=\C$, this is proved in \cite[Proposition 5.3]{benoist2021coniveau}. If $k$ is an algebraically closed field of characteristic zero, the claim follows from the case $k=\C$, using the invariance of \'etale cohomology under extensions of algebraically closed fields and the fact that $Z$ can be defined over $\overline{\Q}$. If $k$ is the algebraic closure of a finite field of characteristic not $2$, the claim was proved in the course of the proof of \cite[Theorem 1.4]{scavia2022cohomology}. Finally, if $k$ is an algebraically closed field of characteristic not $2$, the claim follows from the invariance of \'etale cohomology under extensions of algebraically closed fields.

The proof now follows from the previous claim, in the same way that \cite[Theorem 5.4]{benoist2021coniveau} follows from \cite[Proposition 5.3]{benoist2021coniveau}. More precisely, in the fourfold case we let $X=Z$ and $\alpha=\sigma$. In the fivefold case we choose an elliptic curve $E$ and a class $\tau \in H^1(E, \Z_2)$ whose reduction modulo 2 is nonzero, let $X=Z\times E$ and $\alpha =p^*_1 \sigma \cup p_2^*\tau$.

In either case, $\alpha$ is $2$-torsion, hence has coniveau $\geq 1$ by \cite[Lemme 3.12]{kahn2012classes}.
In the fourfold case we have $\on{Sq}^3(\pi_2(\alpha))=\pi_2(\sigma)^2\neq 0$. In the fivefold case, 
we have $\on{Sq}^1(\pi_2(\sigma))=\on{Sq}^1(\pi_2(\tau))=\on{Sq}^3(\pi_2(\tau))=0$, Cartan's formula implies $\on{Sq}^3(\pi_2(\alpha))=p_1^*\pi_2(\sigma)^2\cup p_2^*\pi_2(\tau)\neq 0$. Therefore, in both cases \cref{benoist-ottem-3.5}
shows that $\alpha$ has strong coniveau $0$.
\end{proof}

\begin{lem}\label{fulton-specialization}
Let  $k$ be a field of positive characteristic, and $i\geq 0$ be an integer, and $\ell$ be a prime number invertible in $k$. Suppose that $\mathcal{G}$ be a reductive group scheme over $\Z$ and that $\alpha\in H^{2i}(B_{\ov{\mathbb{Q}}}\mathcal{G},\Z_{\ell}(i))$ belongs to the image of the cycle map \[CH^i(B_{\mathbb{Q}}\mathcal{G})\otimes\Z_{\ell}\to H^{2i}(B_{\ov{\mathbb{Q}}}\mathcal{G},\Z_{\ell}(i)).\]  Then the specialization of $\alpha$ in $H^{2i}(B_{\ov{k}}\mathcal{G},\Z_{\ell}(i))$ belongs to the image of \[CH^i(B_k\mathcal{G})\otimes\Z_{\ell}\to H^{2i}(B_{\ov{k}}\mathcal{G},\Z_{\ell}(i)).\]
\end{lem}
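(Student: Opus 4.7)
The plan is to use Totaro's finite-dimensional approximation of $B\mathcal{G}$ by smooth schemes over $\Z$ and to spread a Chow representative of $\alpha$ across $\on{Spec}(\Z)$. Using Totaro's construction, I choose a representation $V$ of $\mathcal{G}$ over $\Z$ together with a $\mathcal{G}$-invariant open subscheme $U\subseteq V$ on which $\mathcal{G}$ acts freely and such that $\on{codim}_V(V\setminus U)>i$. The quotient $\mathcal{X}:=U/\mathcal{G}$ is a smooth $\Z$-scheme, and for every field $F$ in which $\ell$ is invertible there are canonical isomorphisms $CH^i(B_F\mathcal{G})\simeq CH^i(\mathcal{X}_F)$ and $H^{2i}(B_{\ov{F}}\mathcal{G},\Z_\ell(i))\simeq H^{2i}(\mathcal{X}_{\ov{F}},\Z_\ell(i))$ that are compatible with cycle maps. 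Since $\mathcal{G}$ is split reductive, the identification $H^*(B_{\ov{F}}\mathcal{G},\Z_\ell)\simeq H^*_B(B\mathcal{G}(\C),\Z)\otimes\Z_\ell$ from \cite[Proposition~8.8]{friedlander1982etale} is independent of $F$, so the implicit specialization map $H^{2i}(B_{\ov{\Q}}\mathcal{G},\Z_\ell(i))\to H^{2i}(B_{\ov{k}}\mathcal{G},\Z_\ell(i))$ appearing in the statement becomes the canonical identification under these comparisons.

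Set $p:=\on{char}(k)$, so that $\mathcal{X}$ is smooth over the discrete valuation ring $\Z_{(p)}$. Working modulo $\ell^n$ and then taking an inverse limit, I may assume that $\alpha$ is the cycle class of some $\alpha'\in CH^i(\mathcal{X}_\Q)$ given by a finite $\Z$-linear combination $\sum_j n_j[Z_j]$ of irreducible codimension-$i$ closed subvarieties of $\mathcal{X}_\Q$. Each closure $\overline{Z_j}\subseteq \mathcal{X}_{\Z_{(p)}}$ is integral and flat over $\Z_{(p)}$, and restriction to the special fiber produces Fulton's specialization $\beta':=\sum_j n_j[(\overline{Z_j})_{\F_p}]\in CH^i(\mathcal{X}_{\F_p})$, cf.\ \cite[Chapter~20]{fulton1998intersection}. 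Pulling back along the flat morphism $\mathcal{X}_k\to\mathcal{X}_{\F_p}$ then yields the candidate class $\beta\in CH^i(\mathcal{X}_k)\otimes\Z_\ell=CH^i(B_k\mathcal{G})\otimes\Z_\ell$.

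It remains to check that the image of $\beta$ in $H^{2i}(B_{\ov{k}}\mathcal{G},\Z_\ell(i))$ equals the specialization of $\alpha$, which amounts to the commutativity of the diagram
\[
\begin{tikzcd}
CH^i(\mathcal{X}_\Q)\otimes\Z_\ell \arrow[r] \arrow[d,"\sigma"'] & H^{2i}(\mathcal{X}_{\ov{\Q}},\Z_\ell(i)) \arrow[d,"\sim"] \\
CH^i(\mathcal{X}_{\F_p})\otimes\Z_\ell \arrow[r] \arrow[d] & H^{2i}(\mathcal{X}_{\ov{\F_p}},\Z_\ell(i)) \arrow[d,"\sim"] \\
CH^i(\mathcal{X}_k)\otimes\Z_\ell \arrow[r] & H^{2i}(\mathcal{X}_{\ov{k}},\Z_\ell(i)),
\end{tikzcd}
\]
where the right-hand vertical maps are the canonical identifications from the first paragraph. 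The lower square commutes by functoriality of the cycle map combined with the invariance of \'etale cohomology of varieties under extensions of separably closed fields. The upper square is the hard part: it requires the \'etale cycle map to be compatible with Chow-theoretic specialization on the smooth but non-proper $\Z_{(p)}$-scheme $\mathcal{X}$. I will handle this by passing to the strict henselization $\Z_{(p)}^{sh}$ and constructing absolute cycle classes in $H^{2i}_{\overline{Z_j}}(\mathcal{X}_{\Z_{(p)}^{sh}},\Z_\ell(i))$ that restrict to the expected cycle classes on the two geometric fibers, leveraging the stable-range condition $\on{codim}_V(V\setminus U)>i$ to bypass the lack of properness.
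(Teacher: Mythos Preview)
Your approach is essentially the paper's: use a finite-dimensional approximation $\mathcal{X}=U/\mathcal{G}$ over the integers, take closures of cycles on the generic fiber to obtain flat cycles over the DVR, and specialize to the closed fiber. For the key commutativity (your ``upper square''), the paper works over $\Z_p$ and $R=W(\ov{\F}_p)$ and directly invokes the cycle map on flat cycles from \cite[Chapitre~4, \S 2.3]{SGA4.5}, which is exactly the absolute-cycle-class construction you sketch; note that the stable-range condition plays no role in that step---it is used only to identify $CH^i$ and $H^{2i}$ of the approximation with those of $B\mathcal{G}$.
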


\begin{proof}
    The proof is entirely analogous to that of \cite[Lemma 4.4(a)]{scavia2022cohomology}. By the invariance of \'etale cohomology under extensions of algebraically closed fields, we may suppose that $k=\F_p$. There exist a $\mathcal{G}_{\Z_p}$-representation $V$ and an open subscheme $U\subset V$ such that $V_{\F_p}-U_{\F_p}$ and $V_{\mathbb{Q}_p}-U_{\mathbb{Q}_p}$ have codimension $\geq i+1$ in $V_{\F_p}$ and $V_{\Q_p}$, respectively, and a $\mathcal{G}_{\Z_p}$-torsor $U\to B$, where $B$ is a smooth $\Z_p$-scheme. 
    
    Let $R\coloneqq W(\ov{\F}_p)$. Fix an algebraic closure $\ov{\Q}_p$ of $\Q_p$ and an inclusion of $\Z_p$-algebras $R\subset \ov{\Q}_p$.
    We obtain a commutative diagram    
    \begin{equation}\label{rectangle-specialize}
    \begin{tikzcd}
    Z^i(B_{\Q_p})\otimes\Z_{\ell} \arrow[r]  & H^{2i}(B_{\ov{\Q}_p},\Z_{\ell}(i)) \\
    Z_{\on{flat}}^i(B/\Z_p)\otimes \Z_\ell \arrow[u,->>] \arrow[r] \arrow[d]  & H^{2i}(B_R,\Z_{\ell}(i)) \arrow[d] \arrow[u]  \\ 
    Z^i(B_{\F_p})\otimes\Z_{\ell} \arrow[r] & H^{2i}(B_{\ov{\F}_p},\Z_{\ell}(i)).
    \end{tikzcd}
    \end{equation}
    Here $Z_{\on{flat}}^i(B/\Z_p)$ is the free abelian group generated by classes of integral subschemes of $B$ which are flat (that is, dominant) over $\Z_p$. The horizontal map in the middle is defined as the inverse limit in $n$ of the cycle maps 
    \[Z^i_{\on{flat}}(B/\Z_p)\to  H^{2i}(B,\mu_{\ell^n}^{\otimes i})\to H^{2i}(B_R,\mu_{\ell^n}^{\otimes i})\] of \cite[Chapitre 4, \S 2.3]{SGA4.5} (where we take $S=\on{Spec}(\Z_p)$ in the definition). The top and bottom horizontal homomorphisms are the usual cycle maps, the top vertical maps are pull-backs along the open embeddings $B_{\Q_p}\hookrightarrow B$ and $B_{\ov{\Q}_p}\hookrightarrow B_R$, and the bottom vertical maps are pull-backs along the closed embeddings $B_{\F_p}\hookrightarrow B$ and $B_{\ov{\F}_p}\hookrightarrow B_R$. The top-left vertical map is surjective: indeed, if $Z\subset B_{\Q_P}$ is an integral subscheme, its closure inside $B$ is irreducible, hence flat over $R$.
    
    Since $V_{\F_p}-U_{\F_p}$ and $V_{\mathbb{Q}_p}-U_{\mathbb{Q}_p}$ have codimension $\geq i+1$ in $V_{\F_p}$ and $V_{\Q_p}$, respectively, by definition we have $CH^i(B_{\Q_p})=CH^i(B_{\Q_p}G)$ and $CH^i(B_{\F_p})=CH^i(B_{\F_p}G)$. Moreover, the natural morphism $B\to B_{\Z_p}G$ induces a commutative diagram
    \[
    \begin{tikzcd}
    H^{2i}(B_{\ov{\Q}_p},\Z_{\ell}(i)) \arrow[r,"\sim"]  & H^{2i}(B_{\ov{\Q}_p}\mc{G},\Z_{\ell}(i)) \\
    H^{2i}(B_R,\Z_{\ell}(i)) \arrow[u] \arrow[r,"\sim"] \arrow[d]  & H^{2i}(B_R\mc{G},\Z_{\ell}(i)) \arrow[d] \arrow[u]  \\ 
    H^{2i}(B_{\ov{\F}_p},\Z_{\ell}(i)) \arrow[r,"\sim"] & H^{2i}(B_{\ov{\F}_p}\mc{G},\Z_{\ell}(i)).
    \end{tikzcd}
    \]
    By \cite[Corollary 2]{friedlander1981etale}  the two vertical maps on the right are isomorphisms. The conclusion follows from (\ref{rectangle-specialize}).
\end{proof}

\begin{proof}[Proof of \Cref{benoist-ottem-4.3}(3)]    
    Let $\mc{G}$ be a semisimple $k$-group of type $\on{G}_2$. By \cite[Proposition 8.8]{friedlander1982etale} and \cite[\S 2.4, Theorem 2.19]{selman2016algebraic} the $\Z_2$-module $H^4(B\mc{G},\Z_2)$ is torsion-free and contains a class whose reduction modulo $2$ is not killed by $\on{Sq}^3$.  By the K\"unneth formula, the same holds for $B(\mc{G}\times \mathbb{G}_{\on{m}})$. 
    By \cite[Theorem 1.3]{ekedahl2009approximating}, there exist a smooth projective $k$-variety $X$ and a morphism $f\colon X\to B(\mc{G}\times \mathbb{G}_{\on{m}})$ such that the pull-back map \[f^*\colon H^*(B(\mc{G}\times \mathbb{G}_{\on{m}}),\Z_2)\to H^*(X,\Z_2)\] is an isomorphism in degrees $\leq 8$. In particular, $H^4(X,\Z_2)$ is torsion-free. Moreover, the five lemma implies that 
    \[f^*\colon H^*(B(\mc{G}\times \mathbb{G}_{\on{m}}),\Z/2\Z)\to H^*(X,\Z/2)\]
    is an isomorphism in degrees $\leq 7$.
    Therefore there exists a class in $H^4(X,\Z_2)$ whose reduction modulo $2$ is not killed by $\on{Sq}^3$. The class $\alpha$ has strong coniveau $0$ by \Cref{benoist-ottem-3.5}. A multiple of $\alpha$ is algebraic: if $k=\C$ this is due to Edidin and Graham \cite{edidin1997characteristic}, if $\on{char}(k)=0$ this follows from the invariance of \'etale cohomology under extensions of algebraically closed field extensions, and if $\on{char}(k)>2$ from \Cref{fulton-specialization}. As a consequence, $\alpha$ restricts to a torsion class on a dense open subset $U\subset X$, hence has coniveau $\geq 1$ by \cite[Lemme 3.12]{kahn2012classes} applied to $U$.
\end{proof}

\begin{rem}\label{rem-on-oddell}
Arguing as in \cite[Remark 4.5]{benoist2021coniveau}, we obtain counterexamples as in \Cref{benoist-ottem-4.3} but where $2$-adic cohomology is replaced by $\ell$-adic cohomology for odd $\ell$ invertible in $k$. 
\end{rem}

\section{Walker Abel-Jacobi map in \texorpdfstring{$\ell$}{l}-adic cohomology: construction}\label{section-WAJconstruction}
In this section, we let $k$ be a field, $\ell$ be a prime number invertible in $k$, and $X$ be a smooth projective geometrically connected $k$-variety of dimension $d$.

In \Cref{subsq-twoalgeq}, we will recall two notions of algebraic equivalence for cycles on $X$ and summarize their basic properties.
In \Cref{WAJconstruction}, we will construct Walker Abel-Jacobi maps in $\ell$-adic cohomology,
defined on cycles on $X$ that are algebraically trivial.

\subsection{Two notions of algebraic equivalence}\label{subsq-twoalgeq}

We say that a cycle class $\alpha\in CH^i(X)$ is {\it algebraically trivial over $k$}, or {\it algebraically trivial in the sense of Fulton} \cite[Definition 10.3]{fulton1998intersection}, 
if there exists a smooth integral (not necessarily separated) scheme $S$ of finite type over $k$, a cycle class $\Gamma \in CH^i(S\times X)$, and $k$-rational points $s_1,s_2\in S$ such that $\alpha=\Gamma_{s_1}-\Gamma_{s_2}$ in $CH^i(X)$, where $\Gamma_{s_1}, \Gamma_{s_2}$ are the refined Gysin fibers of the cycle class $\Gamma$ (see \cite[\S 6.2]{fulton1998intersection}).
We also say that a cycle class $\alpha\in CH^i(X)$ is {\it algebraically trivial over $\ov{k}$}, or simply, {\it algebraically trivial}, 
if $\alpha$ becomes algebraically trivial in the sense of Fulton over some separable finite extension $K/k$, or equivalently, over the separable closure $\ov{k}$.
Finally, we say that a cycle class $\alpha\in CH^i(X)$ is {\it homologically trivial} if $\alpha$ is in the kernel of the cycle class map $CH^i(X)\rightarrow \prod_{\ell\neq \Char k}H^{2i}(\ov{X},\Z_\ell(i))$.

We define $CH^i(X)_{\Falg}, \,CH^i(X)_{\alg},\, CH^i(X)_{\hom}\subset CH^i(X)$ as the subgroups of cycle classes that are algebraically trivial in the sense of Fulton, algebraically trivial, and homologically trivial, respectively.
By definition, we have 
\[CH^i(X)_{\Falg}\subset CH^i(X)_{\alg}\subset CH^i(X)_{\hom}\subset CH^i(X)\]
and $CH^i(\ov{X})_{\Falg}=CH^i(\ov{X})_{\alg}$.

\begin{lem}\label{lem-algeq}
The groups $CH^i(X)_{\Falg},CH^i(X)_{\alg}, CH^i(X)_{\hom}$ are stable under proper push-forward, flat pull-back, refined Gysin homomorphisms, and Chern class operations.
In particular, they are stable under the action of correspondences.
\end{lem}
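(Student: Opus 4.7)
The plan is to treat the three subgroups separately, reducing each case to standard functoriality properties of Chow groups and, for $CH^i(X)_{\hom}$, of $\ell$-adic cohomology.

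For $CH^i(X)_{\hom}$, the stability under each of the four operations is immediate from the fact that the cycle class map
\[
CH^i(X)\to \prod_{\ell\ne \Char k} H^{2i}(\ov{X},\Z_\ell(i))
\]
is compatible with proper push-forward, flat pull-back, refined Gysin homomorphisms associated to regular embeddings or l.c.i.\ morphisms, and Chern class operations. Each of these compatibilities is classical; see \cite{fulton1998intersection} together with its \'etale counterpart. Since the cycle class map intertwines the Chow-theoretic and cohomological operations, its kernel is preserved by each.

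For $CH^i(X)_{\Falg}$, the key observation is that each of the four operations on $CH^i(X)$ lifts to an operation on $CH^i(S\times X)$ acting only on the $X$-factor, and that the fiber evaluation $\Gamma\mapsto \Gamma_s$ at a $k$-rational point $s\in S$ is itself a refined Gysin homomorphism, namely the one associated to the regular embedding $\{s\}\times X\hookrightarrow S\times X$. Given $\alpha=\Gamma_{s_1}-\Gamma_{s_2}$ with $\Gamma\in CH^i(S\times X)$, we then witness Fulton-algebraic triviality of $f_*\alpha$, $g^*\alpha$, $j^!\alpha$, and $c_r(E)\cap \alpha$ by the classes $(\id_S\times f)_*\Gamma$, $(\id_S\times g)^*\Gamma$, $(\id_S\times j)^!\Gamma$, and $c_r(\on{pr}_X^*E)\cap \Gamma$ in the appropriate Chow groups of $S\times X$ (or $S\times Y$). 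The fact that evaluation at $s_j$ commutes with each of these four operations is precisely the compatibility of refined Gysin homomorphisms with, respectively, proper push-forward, flat pull-back, iterated refined Gysin pull-back, and Chern class operations, all of which are treated in \cite[Ch.~3, Ch.~6]{fulton1998intersection}.

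For $CH^i(X)_{\alg}$, stability follows at once from the Fulton case: the cycle in question becomes Fulton-algebraically trivial after base change to some separable finite extension $K/k$, the argument just given applies verbatim over $K$, and all four operations commute with the base-change map along $\on{Spec}K\to\on{Spec}k$. Finally, the statement about correspondences is formal: the action of $\Gamma\in CH^*(X\times Y)$ on cycles is the composite of a flat pull-back along $\on{pr}_X$, a refined intersection with $\Gamma$, and a proper push-forward along $\on{pr}_Y$, so it preserves each of the three subgroups by what was already shown. No single step is a genuine obstacle; the only real care required is in tracking codimension shifts and invoking the correct compatibility from \cite{fulton1998intersection}.
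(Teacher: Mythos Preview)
Your proposal is correct and follows essentially the same route as the paper's proof: the paper simply cites \cite[Proposition 10.3]{fulton1998intersection} for $CH^i(X)_{\Falg}$ and reduces $CH^i(X)_{\alg}$ and $CH^i(X)_{\hom}$ to the case over $\ov{k}$ via compatibility of the operations with base change, while you spell out the content of Fulton's argument more explicitly. The only cosmetic difference is that for $CH^i(X)_{\hom}$ the paper phrases the reduction through $CH^i(\ov{X})_{\hom}$ and cites \cite[Chapter 19]{fulton1998intersection}, whereas you invoke the compatibility of the cycle class map directly; these amount to the same thing.
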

\begin{proof}
For $CH^i(X)_{\Falg}$, see \cite[Proposition 10.3]{fulton1998intersection}.
The properties for  $CH^i(X)_{\alg}$ and $CH^i(X)_{\hom}$ follow from the corresponding properties \cite[Proposition 10.3, Chapter 19]{fulton1998intersection} for $CH^i(\ov{X})_{\alg}$ and $CH^i(\ov{X})_{\hom}$, respectively, and from the fact that the operations in question are compatible with the base-change map $CH^i(X)\rightarrow CH^i(\ov{X})$, which may be deduced from the compatibility of the operations with flat pull-backs \cite[Lemma 1.7.1, Proposition 1.7, Theorem 6.2 (b), Theorem 3.2 (d)]{fulton1998intersection}.
\end{proof}

\begin{lem}\label{lem-abc0}
The groups  $CH^1(X)_{\alg},CH^1(X)_{\hom}$ coincide in characteristic zero, and they do up to inverting $p$ in characteristic $p>0$. In arbitrary characteristic, we have
\[
CH^d(X)_{\Falg}=CH^d(X)_{\alg}=CH^d(X)_{\hom}=\Ker\left(\deg\colon CH_0(X)\rightarrow \Z\right).
\]
\end{lem}
\begin{proof}
In codimension $1$, for every prime number $\ell$ invertible in $k$, the Kummer exact sequence induces a short exact sequence:
\[
0\rightarrow NS(\ov{X})\otimes \Z_\ell\xrightarrow{\cl} H^2(\ov{X},\Z_\ell(1))\rightarrow \Hom(\Q_\ell/\Z_\ell, \Br(\ov{X}))\rightarrow 0,
\]
where $NS(\ov{X})=CH^1(\ov{X})/CH^1(\ov{X})_{\alg}$ is the N\'eron--Severi group and $\Br(\ov{X})$ is the Brauer group. 
Hence, if $\Char k =0$, then $CH^1(\ov{X})_{\alg}=CH^1(\ov{X})_{\hom}$, which yields $CH^1(X)_{\alg}=CH^1(X)_{\hom}$. If $\Char k =p>0$, we obtain the same result after inverting $p$.

In codimension $d$, we have $CH^d(X)_{\hom}=\Ker\left(\deg\colon CH_0(X)\rightarrow \Z\right)$ in arbitrary characteristic. 
The assertion then follows from a result of Achter--Casalaina-Martin--Vial \cite[Proposition 3.11]{achter2019parameter}. 
\end{proof}

\begin{lem}\label{lem-cylindercycles}
Suppose that $k$ is perfect. Then 
\begin{equation}\label{eq2'}
CH^i(X)_{\Falg}=\sum_{(C,\Gamma)}\Image\left(\Gamma_*\colon CH_0(C)_{\hom}\rightarrow CH^i(X)\right),
\end{equation}
where $(C,\Gamma)$ runs over all pairs, where $C$ is a smooth projective geometrically connected $k$-curve and $\Gamma\in CH^i(C\times X)$.
\end{lem}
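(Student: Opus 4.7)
The plan is to prove the two inclusions separately. The direction $\supseteq$ is formal from the preceding lemmas, whereas the direction $\subseteq$ requires a geometric construction: given a Fulton-algebraically trivial cycle class, produce a smooth projective geometrically connected curve realizing it as in the right-hand side of (\ref{eq2'}).

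For $\supseteq$, let $(C,\Gamma)$ be a pair with $C$ a smooth projective geometrically connected $k$-curve and $\Gamma \in CH^i(C \times X)$, and let $\beta \in CH_0(C)_{\hom}$. By \cref{lem-abc0}, $CH_0(C)_{\hom}=CH_0(C)_{\Falg}$, so $\beta$ is algebraically trivial in the sense of Fulton. Since Fulton-algebraic triviality is preserved by correspondences (\cref{lem-algeq}), $\Gamma_*\beta \in CH^i(X)_{\Falg}$.

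For $\subseteq$, take $\alpha \in CH^i(X)_{\Falg}$ and write $\alpha=(\Gamma_0)_{s_1}-(\Gamma_0)_{s_2}$ as in the definition, with $S$ a smooth integral $k$-variety, $\Gamma_0 \in CH^i(S \times X)$, and $s_1,s_2 \in S(k)$. The heart of the proof consists in producing a smooth projective geometrically connected $k$-curve $C$, a dense open $U \subset C$, a morphism $f \colon U \to S$, and $k$-rational points $c_1,c_2 \in U(k)$ such that $f(c_j)=s_j$ for $j=1,2$. To achieve this I would first shrink $S$ to a quasi-projective open neighborhood of $\{s_1,s_2\}$ and fix a locally closed embedding into a projective space, then iteratively take generic linear sections passing through the line spanned by $s_1$ and $s_2$; this should yield a $1$-dimensional closed subvariety $C_0 \subset S$ containing $s_1,s_2$ as smooth $k$-rational points. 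The normalization of the projective closure of $C_0$ then gives the desired $C$, with $c_1,c_2$ obtained as the (unique, $k$-rational) lifts of $s_1,s_2$ through the smooth locus of $C_0$.

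Granting the existence of $(C,f,c_1,c_2)$, the verification is formal: set $\Gamma' := (f \times \id_X)^* \Gamma_0 \in CH^i(U \times X)$, extend $\Gamma'$ to $\Gamma \in CH^i(C \times X)$ by taking the closure of its supporting cycles, and set $\beta := [c_1]-[c_2] \in CH_0(C)$. Since $\deg\beta=0$, we have $\beta \in CH_0(C)_{\hom}$ by \cref{lem-abc0}. Because $c_j \in U(k)$, the refined Gysin fiber $\Gamma_{c_j}$ agrees with $\Gamma'_{c_j}$, which by compatibility of refined Gysin maps with pullback along $f \times \id_X$ equals $(\Gamma_0)_{f(c_j)}=(\Gamma_0)_{s_j}$ in $CH^i(X)$. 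Therefore $\Gamma_*\beta = \Gamma_{c_1} - \Gamma_{c_2} = \alpha$, as required.

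The main obstacle is the geometric step: producing the curve $C$ together with $k$-rational lifts of $s_1,s_2$. Over infinite perfect fields this is essentially classical Bertini (for smoothness and geometric irreducibility of a generic linear section forced to pass through two prescribed points), but over finite fields one must invoke \cite{poonen2004bertini} (or the Altman--Kleiman variant) to guarantee a smooth, geometrically irreducible section through the two $k$-points. This is the place where the assumption that $k$ be perfect enters.
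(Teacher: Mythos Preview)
Your proposal is correct and follows the same route as the paper: the paper simply cites \cite[Theorem~1, Lemma~3.9, Proposition~3.10]{achter2019parameter}, and your argument is a sketch of precisely those results. In particular, your Bertini-type reduction to curve parameter spaces (together with the observation that a connected $k$-curve carrying a $k$-rational point is automatically geometrically connected) is the content of their Theorem~1, while your use of \cref{lem-abc0} and \cref{lem-algeq} for the inclusion $\supseteq$ matches their Lemma~3.9 and Proposition~3.10.
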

\begin{proof}
This is a restatement of results of Achter--Casalaina-Martin--Vial \cite{achter2019parameter}.
They showed in \cite[Theorem 1]{achter2019parameter} that one may use smooth projective connected $k$-curves as parameter spaces in the definition of algebraic equivalence in the sense of Fulton; such $k$-curves are automatically geometrically connected because they have a $k$-rational point.
Hence the left term of (\ref{eq2'}) is contained in the right term of (\ref{eq2'}).
The opposite inclusion follows from \cite[Lemma 3.9, Proposition 3.10]{achter2019parameter}.
\end{proof}

\subsection{Construction of an \texorpdfstring{$\ell$}{l}-adic Walker Abel-Jacobi map}\label{WAJconstruction}

Recall that Jannsen \cite[\S 9]{jannsenn1990mixed}
constructed an $\ell$-adic Abel-Jacobi map:
\[
\cl_{AJ}\colon CH^i(X)_{\hom}\otimes \Z_\ell\rightarrow H^1(k, H^{2i-1}(\ov{X},\Z_\ell(i))).
\]
The map $\cl_{AJ}$ is related to the cycle class map via the Hochschild-Serre spectral sequence \cite[Lemma 9.4]{jannsenn1990mixed}:
\[
E^{r,s}_2 := H^r(k, H^s(\ov{X},\Z_\ell(m)))\Rightarrow H^{r+s}(X,\Z_\ell(m)).
\]
For instance, if $k=\F$ is a finite field, then we have a Cartesian square
\[
\begin{tikzcd}
CH^i(X)_{\hom}\otimes \Z_\ell \arrow[hookrightarrow]{r}\arrow[d, "\cl_{AJ}"]& CH^i(X)\otimes \Z_\ell\arrow[d, "\cl"]\\
H^1(\F, H^{2i-1}(\ov{X},\Z_\ell(i)))\arrow[hookrightarrow]{r}& H^{2i}(X,\Z_\ell(i)),
\end{tikzcd}
\]
where the bottom horizontal map is an edge homomorphism in the Hochschild-Serre spectral sequence.

We wish to construct 
an $\ell$-adic Walker Abel-Jacobi map:
\begin{equation}\label{eq-walker}
\cl_{W}\colon CH^i(X)_{\alg}\otimes\Z_{\ell}\to H^1(k, N^{i-1}H^{2i-1}(\overline{X},\Z_{\ell}(i)))\end{equation}
compatible with the $\ell$-adic Abel-Jacobi map. Our construction is analogous to the construction of the Walker Abel-Jacobi map for the singular cohomology of smooth proper complex varieties given in \cite[Section 2]{suzuki2021factorization}.

Here is the construction.
For every codimension $i$ closed subset $Z\subset X$, the long exact sequence for cohomology groups with supports yields
a short exact sequence of $G$-modules:
\[
0 \to H^{2i-1}(\overline{X},\Z_{\ell}(i)) \to H^{2i-1}(\overline{X}-\overline{Z},\Z_{\ell}(i)) \to Z_{\overline{Z}}^i(\overline{X})_{\hom}\otimes \Z_\ell\to 0
\]
which is functorial in $Z$,
where $Z_{\overline{Z}}^i(\overline{X})_{\hom}$ denotes the free abelian group of codimension $i$-cycles on $\overline{X}$ that are supported on $\overline{Z}$ and homologically trivial.
This restricts to another short exact sequence of $G$-modules:
\[0 \to N^{i-1}H^{2i-1}(\overline{X},\Z_{\ell}(i)) \to N^{i-1} H^{2i-1}(\overline{X}-\overline{Z},\Z_{\ell}(i)) \to Z_{\overline{Z}}^i(\overline{X})_{\alg}\otimes \Z_\ell\to 0\]
which is functorial in $Z$,
where $Z_{\overline{Z}}^i(\overline{X})_{\alg}$ denotes the free abelian group of codimension $i$-cycles on $\overline{X}$ that are supported on $\overline{Z}$ and algebraically trivial.
Indeed, the argument in \cite[Section 2]{suzuki2021factorization} goes through using the coniveau spectral sequence for $\ell$-adic cohomology, where in order to extract information of algebraic equivalence we need \cite[Proposition 4.5]{kahn2012classes} as an $\ell$-adic analogue of \cite[Theorem 7.3]{bloch1974gersten}.
Passing to Galois cohomology then yields a connecting homomorphism
\[(Z_{\overline{Z}}^i(\overline{X})_{\alg})^G\otimes \Z_\ell\to H^1(k,N^{i-1} H^{2i-1}(\overline{X},\Z_{\ell}(i)))\]
which is natural in $Z$. Pre-composing with the map $Z^i_Z(X)_{\alg}\to (Z^i_{\overline{Z}}(\overline{X})_{\alg})^G$ induced by pull-back, we obtain a homomorphism
\[Z^i_Z(X)_{\alg}\otimes \Z_\ell\to H^1(k,N^{i-1} H^{2i-1}(\overline{X},\Z_{\ell}(i)))\]
which is functorial in $Z$. Since $Z(X)_{\alg}$ is the direct limit of the $Z_Z(X)_{\alg}$, these maps induce a unique homomorphism
\[Z^i(X)_{\alg}\otimes \Z_\ell\to H^1(k,N^{i-1} H^{2i-1}(\overline{X},\Z_{\ell}(i)))\]
compatible with all the previous maps. 
Moreover, this homomorphism factors through cycles modulo rational equivalence by an argument analogous to \cite[Lemma 2.2, Corollary 2.3]{suzuki2021factorization}.
We define
(\ref{eq-walker}) as the induced map. 

We summarize the basic properties of the $\ell$-adic Walker Abel-Jacobi map below.

\begin{lem}\label{lem-WAJ}
\begin{enumerate}
    \item The $\ell$-adic Walker Abel-Jacobi map commutes with the $\ell$-adic Abel-Jacobi map, that is, we have a commutative diagram:
   \begin{equation}\label{waj-aj-compatible}
    \begin{tikzcd}
    CH^i(X)_{\alg}\otimes \Z_\ell \arrow[r,"\cl_W"]\arrow[d]& H^1(k, N^{i-1}H^{2i-1}(\ov{X},\Z_\ell(i)))\arrow[d]\\
    CH^i(X)_{\hom}\otimes \Z_\ell \arrow[r,"\cl_{AJ}"]& H^1(k, H^{2i-1}(\ov{X},\Z_\ell(i))).
    \end{tikzcd}
    \end{equation}
    For $i\in\{1, \dim X\}$, the two maps coincide.
    \item The $\ell$-adic Walker Abel-Jacobi map is compatible with correspondences. That is, 
    for every smooth projective geometrically connected
    varieties $X, Y$ and codimension $(i-j+\dim Y)$-cycle $\Gamma$ on $Y\times X$,
    we have a commutative diagram:
    \[
    \begin{tikzcd}
        CH^j(Y)_{\alg}\otimes \Z_\ell \arrow[r,"\cl_{W}"]\arrow[d,"\Gamma_*"]& H^1(k, N^{j-1}H^{2j-1}(\ov{Y},\Z_\ell(j)))\arrow[d,"\Gamma_*"]\\
    CH^i(X)_{\alg}\otimes \Z_\ell \ar[r,"\cl_{W}"] & H^1(k, N^{i-1}H^{2i-1}(\ov{X},\Z_\ell(i))).
    \end{tikzcd}
    \]
    \item The $\ell$-adic Walker Abel-Jacobi map commutes with field extensions and norm maps. More precisely, for every separable finite field extension $K/k$, we have commutative diagrams:
    \[
    \begin{tikzcd}
     CH^i(X)_{\alg}\otimes \Z_\ell\ar[r,"\cl_W"]\ar[d,"\rho_{K/k}"]& H^1(k, N^{i-1}H^{2i-1}(\ov{X},\Z_\ell(i)))\ar[d,"\rho_{K/k}"]\\
    CH^i(X_K)_{\alg}\otimes \Z_\ell\ar[r,"\cl_W"] & H^1(K, N^{i-1}H^{2i-1}(\ov{X},\Z_\ell(i)))
    \end{tikzcd}
    \]
    and
    \[
  \begin{tikzcd}
  CH^i(X_K)_{\alg}\otimes\Z_\ell \ar[r,"\cl_W"] \ar[d,"\nu_{K/k}"] & H^1(K, N^{i-1}H^{2i-1}(\overline{X},\Z_{\ell}(i))) \ar[d,"\nu_{K/k}"]\\  
CH^i(X)_{\alg}\otimes\Z_\ell \ar[r,"\cl_W"] & H^1(k, N^{i-1}H^{2i-1}(\overline{X},\Z_\ell(i))),
  \end{tikzcd}
\]
where $\rho_{K/k}$ is the pull-back map and $\nu_{K/k}$ is the norm map.
\end{enumerate}
\end{lem}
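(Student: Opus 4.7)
The plan is to deduce all three assertions from the functoriality of the construction of $\cl_W$, which is assembled entirely from maps of short exact sequences of $G$-modules and the induced connecting homomorphisms in Galois cohomology.

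For (1), the diagram (\ref{waj-aj-compatible}) commutes because the inclusions $N^{i-1}H^{2i-1}(\overline{X},\Z_\ell(i))\hookrightarrow H^{2i-1}(\overline{X},\Z_\ell(i))$ and the corresponding inclusions on $\overline{X}-\overline{Z}$, together with the inclusion $Z^i_{\overline{Z}}(\overline{X})_{\alg}\hookrightarrow Z^i_{\overline{Z}}(\overline{X})_{\hom}$, assemble into a morphism of short exact sequences, and Galois cohomology then gives the required commutativity of the connecting maps. For the coincidence of $\cl_W$ and $\cl_{AJ}$ when $i=1$ or $i=\dim X$, I would separately verify that the inclusions induced on the target are isomorphisms. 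In codimension $1$, $N^0H^1(\overline{X},\Z_\ell(1))=H^1(\overline{X},\Z_\ell(1))$ trivially, and $CH^1(X)_{\alg}\otimes\Z_\ell=CH^1(X)_{\hom}\otimes\Z_\ell$ by \cref{lem-abc0}. In top codimension, $CH^d(X)_{\alg}=CH^d(X)_{\hom}$ again by \cref{lem-abc0}, while the equality $N^{d-1}H^{2d-1}(\overline{X},\Z_\ell(d))=H^{2d-1}(\overline{X},\Z_\ell(d))$ follows by choosing a smooth curve $C\subset \overline{X}$ cut out by generic hyperplane sections and invoking the Lefschetz hyperplane theorem (\cite[Theorem VI.7.1]{milne1980etale}), which shows that $H_1(C,\Z_\ell)\to H_1(\overline{X},\Z_\ell)=H^{2d-1}(\overline{X},\Z_\ell(d))$ is surjective.

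For (2), I would check that the action of a correspondence $\Gamma\in CH^{i-j+\dim Y}(Y\times X)$ induces a morphism between the entire systems of short exact sequences used in the construction of $\cl_W$ for $Y$ and for $X$. This uses that correspondences preserve both algebraic equivalence of cycles (\cref{lem-algeq}) and the coniveau filtration (\cref{lem-csccorr}), so that $\Gamma_*$ acts compatibly on the cycle part $Z^j_{\overline{Z}}(\overline{Y})_{\alg}\otimes\Z_\ell$ and on the cohomology part $N^{j-1}H^{2j-1}(\overline{Y}-\overline{Z},\Z_\ell(j))$. Functoriality of the connecting homomorphism then yields the claim.

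For (3), both $\rho_{K/k}$ and $\nu_{K/k}$ can be applied to the defining short exact sequences: pull-back $\rho_{K/k}$ corresponds to flat pull-back on cycles and the base-change map on cohomology with supports, both compatible with the connecting homomorphism and with restriction in Galois cohomology. The norm $\nu_{K/k}$ corresponds to proper push-forward on cycles and to the push-forward on cohomology with supports (through the finite \'etale cover $X_K\to X$), and both commute with corestriction in Galois cohomology. In each case one obtains a morphism of short exact sequences of $G_K$- or $G$-modules whose induced connecting map is precisely the appropriate $\cl_W$.

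The only non-routine point I anticipate is the identity $N^{d-1}H^{2d-1}(\overline{X},\Z_\ell(d))=H^{2d-1}(\overline{X},\Z_\ell(d))$ in part (1); once this is in hand, everything else is formal naturality applied to the defining diagrams.
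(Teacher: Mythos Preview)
Your proposal is correct and follows essentially the same approach as the paper: for (1) and (2) you pass a morphism of the defining short exact sequences to Galois cohomology, and for (3) you invoke functoriality under restriction/corestriction. The paper's proof of (2) is slightly more concrete in that it makes explicit the need to arrange $\Gamma$ to meet $W\times X$ properly (via the moving lemma) before writing down the middle vertical map $(\pi_X)_*j^*(\cup\Gamma)'(\pi_Y)^*$ on the open pieces $N^{j-1}H^{2j-1}(\overline{Y}-\overline{W},\Z_\ell(j))$; your invocation of \cref{lem-csccorr} covers the endpoints of the sequence but not directly this middle term, so you should be aware that this step requires a little more than citing that lemma.
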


\begin{proof}
For (1), 
note that for every codimension $i$ closed subset $Z\subset X$, we have a commutative diagram with exact rows.
\begin{equation*}
\adjustbox{scale=0.85,center}{
\begin{tikzcd}
    0\arrow[r] &N^{i-1}H^{2i-1}(\ov{X},\Z_\ell(i))\arrow[r]\arrow[d] & N^{i-1}H^{2i-1}(\ov{X}-\ov{Z},\Z_\ell(i))\arrow[r]\arrow[d] &Z^i_{\ov{Z}}(\ov{X})_{\alg}\otimes \Z_\ell \arrow[r]\arrow[d]& 0 \\
   0\arrow[r] &H^{2i-1}(\ov{X},\Z_\ell(i))\arrow[r] & H^{2i-1}(\ov{X}-\ov{Z},\Z_\ell(i))\arrow[r] &Z^i_{\ov{Z}}(\ov{X})_{\hom}\otimes \Z_\ell \arrow[r]& 0.\end{tikzcd}
   }
\end{equation*}
By passing to Galois cohomology, the commutativity of $\cl_W$ and $\cl_{AJ}$ follows. As for the latter assertion, by Lemma \ref{lem-abc0}, we have
\begin{align*}
&CH^1(X)_{\alg}[1/p]=CH^1(X)_{\hom}[1/p],\\ &CH^d(X)_{\Falg}=CH^d(X)_{\alg}=CH^d(X)_{\hom},
\end{align*}
where $p=\Char k$.
Moreover, we have
\[
N^0H^1(\ov{X},\Z_\ell(1))=H^1(\ov{X},\Z_\ell(1)), \, N^{d-1}H^{2d-1}(\ov{X},\Z_\ell(d))=H^{2d-1}(\ov{X},\Z_\ell(d)),
\]
where the latter is a consequence of the Lefschetz hyperplane section theorem \cite[Theorem VI.7.1]{milne1980etale}. Thus for $i\in\{1, \dim X\}$,  the two vertical maps in (\ref{waj-aj-compatible}) are the identity maps, which proves that $\cl_W$ and $\cl_{AJ}$ coincide.

For (2), the argument is entirely analogous to \cite[Lemma 2.4]{suzuki2021factorization}.
For every codimension $i$ closed subset $W\subset Y$ and every codimension $(i-j+\dim Y)$ cycle $\Gamma$ on $Y\times X$ such that $W\times X$ and $\Gamma$ intersect properly, the argument in the proof of \cite[Lemma 2.2]{suzuki2021factorization} yields a commutative diagram with exact rows
\[
\adjustbox{scale=0.85,center}{
\begin{tikzcd}
    0\arrow[r] &N^{j-1}H^{2j-1}(\ov{W},\Z_\ell(j))\arrow[r]\arrow[d,"\Gamma_*"] & N^{j-1}H^{2j-1}(\ov{Y}-\ov{W},\Z_\ell(j))\arrow[r]\arrow[d, "(\pi_X)_*j^*(\cup\Gamma)'(\pi_Y)^*"] &Z^j_{\ov{W}}(\ov{Y})_{\alg}\otimes \Z_\ell \arrow[r]\arrow[d,"\Gamma_*"]& 0 \\
   0\arrow[r] &N^iH^{2i-1}(\ov{X},\Z_\ell(i))\arrow[r] & N^iH^{2i-1}(\ov{X}-\ov{Z},\Z_\ell(i))\arrow[r] &Z^i_{\ov{Z}}(\ov{X})_{\alg}\otimes \Z_\ell \arrow[r]& 0,\end{tikzcd}
   }
\]
where $Z$ is the image of $(W\times X)\cap \on{Supp}(\Gamma)$ under the projection $Y\times X\rightarrow X$ and the middle vertical map is defined in the same way as in the proof of \cite[Lemma 2.2]{suzuki2021factorization}.
Note that it is enough to consider the proper intersection case as above by the moving lemma \cite[Tag 0B0D]{stacks-project}.
By passing to Galois cohomology, the assertion follows.

Finally, 
(3) follows from 
the functoriality of Galois cohomology with respect to restriction and corestriction
and from the construction of the $\ell$-adic Walker Abel-Jacobi map.
\end{proof}

\begin{lem}\label{lem-cscwaj}
Suppose that $k$ is perfect.
Then the image of $CH^i(X)_{\Falg}\otimes \Z_\ell$ under $\cl_W$ is contained in
\[
\Image\left(H^1(k, \widetilde{N}^{i-1}H^{2i-1}(\ov{X},\Z_\ell(i)))\rightarrow H^1(k, N^{i-1}H^{2i-1}(\ov{X},\Z_\ell(i)))\right)
\]
\end{lem}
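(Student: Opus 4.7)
The plan is to reduce to classes coming from curves and then invoke the description of the deepest part of the strong coniveau filtration via the Jacobians (Lemma \ref{rem-cylinder}). More precisely, since $k$ is perfect, \Cref{lem-cylindercycles} tells us that every class in $CH^i(X)_{\Falg}$ is a finite sum of classes of the form $\Gamma_*\alpha$, where $C$ is a smooth projective geometrically connected $k$-curve, $\Gamma \in CH^i(C\times X)$, and $\alpha \in CH_0(C)_{\hom}$. By \Cref{lem-abc0} we have $CH_0(C)_{\hom}=CH_0(C)_{\alg}$, so $\cl_W$ is defined on $\alpha$, and by linearity of $\cl_W$ it suffices to handle such an $\alpha$.

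Next, I would apply the compatibility with correspondences (\Cref{lem-WAJ}(2)) to get $\cl_W(\Gamma_*\alpha) = \Gamma_*(\cl_W(\alpha))$, where on the right $\cl_W$ is the Walker Abel--Jacobi map for $C$. Since $\dim C = 1$, \Cref{lem-WAJ}(1) identifies this $\cl_W$ with Jannsen's $\cl_{AJ}$, so $\cl_W(\alpha)\in H^1(k,H^1(\ov C,\Z_\ell(1)))$ (with no coniveau restriction to worry about, since $H^1(\ov C,\Z_\ell(1))=N^0H^1(\ov C,\Z_\ell(1))$).

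The key point is then to observe that the induced Galois-equivariant map
\[
\Gamma_*\colon H^1(\ov C,\Z_\ell(1))=H_1(\ov C,\Z_\ell)\longrightarrow H^{2i-1}(\ov X,\Z_\ell(i))
\]
has image contained in $\widetilde{N}^{i-1}H^{2i-1}(\ov X,\Z_\ell(i))$. This is exactly the description provided by \Cref{rem-cylinder} (or already \Cref{lem-cylinder}). Consequently $\Gamma_*$ factors through $\widetilde N^{i-1}H^{2i-1}(\ov X,\Z_\ell(i))$ as $G$-modules, inducing on Galois cohomology a factorization
\[
H^1(k,H^1(\ov C,\Z_\ell(1)))\to H^1(k,\widetilde N^{i-1}H^{2i-1}(\ov X,\Z_\ell(i)))\to H^1(k,N^{i-1}H^{2i-1}(\ov X,\Z_\ell(i)))
\]
of the map $\Gamma_*$ appearing in \Cref{lem-WAJ}(2). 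Chasing $\cl_W(\alpha)$ through this factorization shows $\cl_W(\Gamma_*\alpha)$ lies in the image of $H^1(k,\widetilde N^{i-1}H^{2i-1}(\ov X,\Z_\ell(i)))$, completing the proof.

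I do not anticipate any real obstacle: all three ingredients (\Cref{lem-cylindercycles}, \Cref{lem-WAJ}, \Cref{rem-cylinder}) are already in place, and the argument is essentially just assembling them. The only subtle point is making sure that the correspondence action used to define strong coniveau (cycle-level pushforward) matches the one used in \Cref{lem-WAJ}(2), which follows from the construction of $\cl_W$ via the coniveau spectral sequence.
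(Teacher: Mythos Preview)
Your proposal is correct and follows essentially the same route as the paper: both reduce via \Cref{lem-cylindercycles} to classes $\Gamma_*\alpha$ coming from curves, use \Cref{lem-WAJ}(1)(2) to identify $\cl_W(\Gamma_*\alpha)=\Gamma_*\cl_{AJ}(\alpha)$, and then invoke \Cref{rem-cylinder} to factor $\Gamma_*$ through $\widetilde{N}^{i-1}$. The paper packages these steps into a single commutative diagram, but the content is identical.
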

\begin{proof}
\cref{rem-cylinder} and \cref{lem-cylindercycles} yield
\[
\widetilde{N}^{i-1} H^{2i-1}(\ov{X},\Z_\ell(i)) = \sum_{(C,\Gamma)} \Image\left(\Gamma_* \colon H_1(\ov{C},\Z_\ell)\rightarrow H^{2i-1}(\ov{X},\Z_\ell(i))\right)
\]
and
\[
CH^i(X)_{\Falg}=
\sum_{(C,\Gamma)}
\Image\left(\Gamma_*\colon CH_0(C)_{\hom}\rightarrow CH^i(X)\right),
\]
where $(C,\Gamma)$ runs over all smooth projective geometrically connected $k$-curves $C$ and $\Gamma\in CH^i(C\times X)$.
By \cref{lem-WAJ} (1) and (2), we obtain a commutative diagram
\[
\begin{tikzcd}
\bigoplus_{(C,\Gamma)} CH_0(C)_{\hom}\otimes \Z_\ell\ar[r,"\cl_{AJ}"]\ar[dd,"(\Gamma_*)", twoheadrightarrow]& \bigoplus_{(C,\Gamma)}H^1(k, H_1(\ov{C},\Z_\ell))\ar[d] \ar[dd, bend left=75, looseness=3.5, "(\Gamma_*)"]\\
& H^1(k, \widetilde{N}^{i-1}H^{2i-1}(\ov{X},\Z_\ell(i)))\ar[d]\\
CH^i(X)_{\Falg}\otimes \Z_\ell\ar[r,"\cl_W"] & H^1(k, N^{i-1}H^{2i-1}(\ov{X},\Z_\ell(i))).
\end{tikzcd}
\]
This finishes the proof.
\end{proof}

Finally, we define the $\ell$-adic Walker intermediate Jacobian by
\[
J^{2i-1}_{W,\ell}(\ov{X}):=\varinjlim_{K/k} H^1(K, N^{i-1}H^{2i-1}(\overline{X},\Z_\ell(i))),
\]
where the $K/k$ runs over all finite separable  extensions of $k$.
Taking the direct limit of (\ref{eq-walker}), we obtain a map
\[
CH^i(\overline{X})_{\alg}\otimes \Z_\ell\rightarrow J^{2i-1}_{W,\ell}(\ov{X}).
\]
We call this map the $\ell$-adic Walker Abel-Jacobi map over $\ov{k}$. 

\section{Walker Abel-Jacobi map in \texorpdfstring{$\ell$}{l}-adic cohomology: over a finite field}\label{section-WAJfinitefield}
In this section, we let $\F$ be a finite field, $\ell$ be a prime number invertible in $\F$, and $X$ be a smooth projective geometrically connected $\F$-variety of dimension $d$.

\subsection{Some lemmas over a finite field}

\begin{lem}\label{algtorsion}
The groups $CH^i(X)_{\Falg},\, CH^i(X)_{\alg}$ are torsion.
\end{lem}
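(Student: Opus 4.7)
The plan is to prove first that $CH^i(X)_{\Falg}$ is torsion, using the cylinder description of algebraic equivalence over a perfect field, and then to deduce the corresponding statement for $CH^i(X)_{\alg}$ by a standard norm-pullback argument.

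Since $\F$ is perfect, \cref{lem-cylindercycles} gives
\[CH^i(X)_{\Falg} = \sum_{(C,\Gamma)} \Image\bigl(\Gamma_*\colon CH_0(C)_{\hom} \to CH^i(X)\bigr),\]
where $C$ ranges over smooth projective geometrically connected $\F$-curves and $\Gamma \in CH^i(C\times X)$. Thus it suffices to prove that $CH_0(C)_{\hom}$ is torsion for any such $C$. But $CH_0(C)_{\hom}$ is precisely the degree-zero subgroup of $\Pic(C) = CH^1(C)$, which embeds into $J(C)(\F)$, where $J(C)$ is the Jacobian of $C$. Since $J(C)$ is an abelian variety over the finite field $\F$, $J(C)(\F)$ is finite, and hence $CH_0(C)_{\hom}$ is finite; in particular, it is torsion. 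Consequently $CH^i(X)_{\Falg}$ is torsion.

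For $CH^i(X)_{\alg}$, let $\alpha \in CH^i(X)_{\alg}$. By definition there exists a finite separable extension $\F'/\F$ such that the pullback $\rho_{\F'/\F}(\alpha) \in CH^i(X_{\F'})_{\Falg}$. Applying the previous paragraph over $\F'$, there is some integer $n \geq 1$ with $n\,\rho_{\F'/\F}(\alpha) = 0$. Composing with the norm map $\nu_{\F'/\F}\colon CH^i(X_{\F'}) \to CH^i(X)$ and using the identity $\nu_{\F'/\F} \circ \rho_{\F'/\F} = [\F':\F]\cdot \id$ (which holds because $X_{\F'} \to X$ is finite flat of degree $[\F':\F]$), we obtain $n[\F':\F]\,\alpha = 0$, so $\alpha$ is torsion.

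There is no substantial obstacle; both steps are direct consequences of results already established in the paper, together with the well-known finiteness of the group of $\F$-points on an abelian variety over a finite field.
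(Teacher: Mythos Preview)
Your proof is correct and follows essentially the same approach as the paper: for $CH^i(X)_{\Falg}$ you both invoke \cref{lem-cylindercycles} and the finiteness of $J(C)(\F)$, and for $CH^i(X)_{\alg}$ you both use a restriction--corestriction argument. Your treatment of the second part is in fact slightly more direct than the paper's (which argues via the torsionness of $CH^i(\ov{X})_{\alg}$ and of the kernel of base change), but the underlying idea is the same.
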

\begin{proof}
By \cref{lem-cylindercycles}, $CH^i(X)_{\Falg}$ is generated by cycles of the form $\Gamma_*(\alpha)$, where $\Gamma$ is a codimension $i$ correspondence on $C\times X$ for some smooth projective geometrically connected $\F$-curve $C$ and $\alpha$ belongs to $CH_0(C)_{\hom}$. Recall that $CH_0(C)_{\hom}$ is isomorphic to the group of $\F$-rational points of the Jacobian variety of $C$ and hence it is finite. It follows that  $CH^i(X)_{\Falg}$ is torsion.

The group $CH^i(X)_{\alg}$ is torsion because $CH^i(\ov{X})_{\alg}=\varinjlim_{K/\F}CH^i(X_K)_{\Falg}$, where $K/\F$ runs over all finite extensions, is torsion and the kernel of the base change map $CH^i(X)\rightarrow CH^i(\ov{X})$ is torsion by a restriction-corestriction argument.\end{proof}

\begin{lem}\label{lem-abc1}
The groups $CH^1(X)_{\Falg}, CH^1(X)_{\alg}$ coincide.
\end{lem}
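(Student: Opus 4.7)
The inclusion $CH^{1}(X)_{\Falg}\subseteq CH^{1}(X)_{\alg}$ is immediate from the definitions, so my plan is to establish the reverse inclusion. I aim to identify both groups with $\Pic^{0}_{X/\F}(\F)$, the $\F$-points of the identity component of the relative Picard scheme of $X$.

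Since $\Br(\F)=0$ by Wedderburn and $\Br(\ov{\F})=0$ (as $\ov{\F}$ is the separable closure of a finite field), the low-degree terms of the Leray spectral sequence for $\Gm$ yield canonical isomorphisms $\Pic(X)\simeq \Pic_{X/\F}(\F)$ and $\Pic(\ov{X})\simeq \Pic_{X/\F}(\ov{\F})$, compatible with the natural base change map. Over $\ov{\F}$, \cref{lem-abc0} already gives $CH^{1}(\ov{X})_{\alg}=CH^{1}(\ov{X})_{\Falg}=\Pic^{0}(\ov{X})$, which corresponds under the above identification to $\Pic^{0}_{X/\F}(\ov{\F})$. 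Via the same identification, I claim $CH^{1}(X)_{\Falg}$ corresponds to $\Pic^{0}_{X/\F}(\F)$: on the one hand, any family of divisors on $S\times X$ with $S$ smooth integral over $\F$ induces a morphism $S\to \Pic_{X/\F}$ whose image is connected, so a difference of two specializations lands in the identity component; on the other hand, each element of $\Pic^{0}_{X/\F}(\F)$ is realized as such a difference using the Poincar\'e line bundle on $\Pic^{0}_{X/\F}\times X$.

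Granting these identifications, the lemma reduces to the elementary observation that, because $\Pic^{0}_{X/\F}$ is an open and closed subscheme of $\Pic_{X/\F}$, an $\F$-point of $\Pic_{X/\F}$ belongs to $\Pic^{0}_{X/\F}(\F)$ if and only if its base change to $\ov{\F}$ belongs to $\Pic^{0}_{X/\F}(\ov{\F})$. Applied to a class $L\in CH^{1}(X)_{\alg}$, whose image in $\Pic(\ov{X})$ lies in $\Pic^{0}_{X/\F}(\ov{\F})$ by the definition of $CH^{1}(X)_{\alg}$, this yields $L\in \Pic^{0}_{X/\F}(\F)=CH^{1}(X)_{\Falg}$, as desired.

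The main obstacle is the classical identification $CH^{1}(X)_{\Falg}=\Pic^{0}_{X/\F}(\F)$, in particular the inclusion ``$\supseteq$''. This requires producing, for each $L\in \Pic^{0}(X)$, an explicit smooth integral parameter scheme $S$ and a family on $S\times X$ realizing $L$ as the difference of two specializations. The Poincar\'e line bundle furnishes such a family on $\Pic^{0}_{X/\F}\times X$ directly when $X$ has an $\F$-rational point; in the general case, one either passes to a finite extension $K/\F$ over which $X_{K}$ acquires a rational point and descends via a norm argument, or works with an \'etale-local version of the Poincar\'e bundle and invokes Lang's theorem on torsors under smooth connected algebraic groups over finite fields.
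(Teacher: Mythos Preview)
Your proposal is correct and follows essentially the same route as the paper: both arguments hinge on the existence of the Poincar\'e line bundle on $\Pic^{0}_{X/\F}\times X$, which is exactly what furnishes the inclusion $\Pic^{0}_{X/\F}(\F)\subset CH^{1}(X)_{\Falg}$ (equivalently, $CH^{1}(X)_{\hom}\subset CH^{1}(X)_{\Falg}$).

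The one point worth noting is how the paper resolves what you call the ``main obstacle.'' You suggest either assuming an $\F$-rational point on $X$, or passing to an extension and descending by a norm argument, or invoking Lang's theorem. The paper instead observes that the Poincar\'e line bundle exists as soon as $X$ carries a zero-cycle of degree $1$---a weaker condition than an $\F$-point---and that this is automatic over a finite field by the Lang--Weil estimates. This dispatches the obstacle in one line and avoids the detours you sketch. Otherwise your identifications $\Pic(X)\simeq\Pic_{X/\F}(\F)$ via $\Br(\F)=0$, and the open--closed argument for $\Pic^{0}_{X/\F}\subset\Pic_{X/\F}$, are correct but not strictly needed once one argues directly that every class in $CH^{1}(X)_{\hom}$ arises from the Poincar\'e family.
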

\begin{proof}
The Poincar\'e line bundle exists for the Picard scheme $\Pic^0_{X/\F}$ (equivalently, $\Pic^0_{X/\F}$ represents the Picard functor $\mathcal{P}ic^0_{X/\F}$ of algebraically trivial line bundles). Indeed, the Poincar\'e line bundle exists if $X$ has a zero-cycle of degree $1$ (see \cite[Section 7.1]{achter2019functorial}), and this is the case by the Lang-Weil estimate.
This shows $CH^1(X)_{\hom}\subset CH^1(X)_{\Falg}$,
hence $CH^1(X)_{\Falg}=CH^1(X)_{\alg}=CH^1(X)_{\hom}$.
\end{proof}

\begin{lem}\label{lem-GC}
Let $M$ be a continuous $G$-module of finite type over $\Z_\ell$. Then $H^2(\F,M)=0$.
\end{lem}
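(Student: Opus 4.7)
The plan is to reduce the continuous cohomology statement to the well-known vanishing for finite Galois modules over a finite field. Recall that $G = \Gal(\ov{\F}/\F) \cong \widehat{\Z}$ has strict cohomological dimension $1$, so $H^{i}(\F, N) = 0$ for every finite discrete $G$-module $N$ and every $i \geq 2$. The task is to transfer this vanishing across the inverse limit $M = \varprojlim_{n} M/\ell^{n} M$.

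First, I would invoke the standard short exact sequence for continuous cochain cohomology of a countable inverse system of finite modules, namely
\[
0 \to {\varprojlim_{n}}^{1}\, H^{i-1}(\F, M/\ell^{n} M) \to H^{i}(\F, M) \to \varprojlim_{n} H^{i}(\F, M/\ell^{n} M) \to 0,
\]
applied with $i = 2$. Since $M$ is finitely generated over $\Z_{\ell}$, each quotient $M/\ell^{n} M$ is a finite $G$-module, so the right-hand term vanishes by cohomological dimension $1$ of $\F$. Thus it remains to show that $\varprojlim_{n}^{1} H^{1}(\F, M/\ell^{n} M) = 0$.

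For this last step, I would observe that each $H^{1}(\F, M/\ell^{n} M)$ is finite: with $G$ topologically generated by the Frobenius $\sigma$, continuous cohomology may be computed from the two-term complex $N \xrightarrow{\sigma - 1} N$, giving $H^{1}(\F, N) = N_{G}$, which is a quotient of the finite group $N = M/\ell^{n} M$. An inverse system of finite groups automatically satisfies the Mittag-Leffler condition, so its $\varprojlim^{1}$ vanishes. Combining the two vanishings in the displayed exact sequence yields $H^{2}(\F, M) = 0$, as claimed. The only mildly delicate point is the citation of the $\varprojlim^{1}$ exact sequence in the continuous cohomology setting (as in Jannsen's work), but no real obstacle arises since finite-type $\Z_{\ell}$-modules behave optimally with respect to the Mittag-Leffler condition.
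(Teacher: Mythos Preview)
Your proof is correct and follows essentially the same route as the paper: write $M$ as an inverse limit of finite $G$-modules, use $\operatorname{cd}(\widehat{\Z})=1$ to kill $H^2$ of each finite quotient, and invoke the $\varprojlim^1$ exact sequence together with finiteness of the $H^1$ terms (Mittag--Leffler) to conclude. One terminological quibble: $\widehat{\Z}$ has cohomological dimension $1$, not \emph{strict} cohomological dimension $1$ (indeed $H^2(\widehat{\Z},\Z)\neq 0$), but since you only apply the vanishing to finite modules this does not affect the argument.
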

\begin{rem}
\cref{lem-GC} shows that $H^1(\F,-)$ is right exact when it is restricted to continuous $G$-modules of finite type over $\Z_\ell$. This is analogous to the right exactness of the intermediate Jacobian functor on the category of mixed Hodge structures.
\end{rem}
\begin{proof}[Proof of \cref{lem-GC}]
    We have an isomorphism $G\simeq\hat{\Z}$, and we let $\sigma\in G$ be a topological generator of $G$. 
    Let $M$ be a continuous $G$-module of finite type over $\Z_\ell$. 
    We may write $M=\varprojlim_n M_n$, where $M_n$ is a finite $G$-module for all $n\geq 0$. The groups $H^2(G,M_n)$ are trivial since $\on{cd}(G)=1$, and the groups $H^1(G,M_n)=M_n/(\sigma-1)M_n$ are finite. By \cite[(2.7.6)]{neukirch2008cohomology}, $H^2(G,M)=\varprojlim_n H^2(G,M_n)=0$.
\end{proof}

\begin{lem}\label{lem-frobenius}
    Let $j\geq 0$ and $m$ be integers such that $j\neq 2m$ and $M$ be a subquotient of $H^{j}(\ov{X},\Z_\ell(m))$. Then the groups $H^i(\F,M)$ are finite for $i=0,1$.
\end{lem}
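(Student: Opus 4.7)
The plan is to combine Deligne's purity theorem with the pro-cyclic structure of $G \cong \hat{\Z}$, following the template of \cref{lem-GC}.

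First, $M$ is a finitely generated $\Z_\ell$-module with continuous $G$-action, being a subquotient of one. By Deligne's purity theorem in Weil II, the $G$-representation $H^j(\ov{X}, \Z_\ell(m)) \otimes_{\Z_\ell} \Q_\ell$ is \emph{pure of weight} $w := j - 2m$, in the sense that the eigenvalues of Frobenius $\sigma$ on it are algebraic numbers all of whose archimedean absolute values equal $q^{w/2}$, where $q = |\F|$. This purity is inherited by subquotients, since the eigenvalues of $\sigma$ on a subquotient form a sub-multiset of the eigenvalues on the ambient representation; hence $M \otimes_{\Z_\ell} \Q_\ell$ is pure of weight $w$. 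The hypothesis $j \neq 2m$ ensures $w \neq 0$, whence $|q^{w/2}| \neq 1$ and $1$ is not an eigenvalue of $\sigma$ on $M \otimes \Q_\ell$. Consequently, $\sigma - 1$ acts invertibly on $M \otimes \Q_\ell$, so both $\ker(\sigma - 1|_M)$ and $\coker(\sigma - 1|_M)$ are finitely generated torsion $\Z_\ell$-modules, and therefore finite.

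Second, I would identify these kernel and cokernel with $H^0(\F, M)$ and $H^1(\F, M)$. Writing $M = \varprojlim_n M_n$ with $M_n$ finite $G$-modules, the pro-cyclicity of $G \cong \hat{\Z}$ yields, for each $n$, $H^0(\F, M_n) = \ker(\sigma - 1 \colon M_n \to M_n)$ and $H^1(\F, M_n) = \coker(\sigma - 1 \colon M_n \to M_n)$, both finite. Passing to the inverse limit via Mittag-Leffler and the identity $H^i(\F, M) = \varprojlim_n H^i(\F, M_n)$ from \cite[(2.7.6)]{neukirch2008cohomology}, exactly as in \cref{lem-GC}, delivers the desired identification, and combined with the first paragraph this gives the finiteness of $H^0(\F, M)$ and $H^1(\F, M)$.

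There is no serious conceptual obstacle; every ingredient is standard. The only point requiring attention is the passage to the inverse limit for continuous $G$-cohomology, which is handled exactly as in the proof of \cref{lem-GC}.
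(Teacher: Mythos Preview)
Your proof is correct and follows essentially the same approach as the paper: both invoke Deligne's purity to see that $M$ is pure of weight $j-2m\neq 0$, conclude that $\sigma-1$ is invertible on $M\otimes_{\Z_\ell}\Q_\ell$, and then identify $H^0(\F,M)$ and $H^1(\F,M)$ with $\ker(\sigma-1|_M)$ and $\coker(\sigma-1|_M)$ via the inverse-limit argument already used in \cref{lem-GC}. The only cosmetic difference is that the paper cites Weil~I (which suffices since $X$ is smooth projective) whereas you invoke Weil~II.
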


\begin{proof}
    Recall that a continuous $G$-module $N$ of finite type over $\Z_\ell$ is pure of weight $n$ if the eigenvalues of Frobenius on $N\otimes_{\Z_\ell} \Q_\ell$ are algebraic numbers with all archimedean absolute values equal to $q^{n/2}$, where $q$ is the order of $\F$. By Deligne's proof of the Weil conjecture \cite[Theorem 1.6]{deligne1974weil}, the $G$-module $H^{j}(\ov{X},\Z_\ell(m))$ is of pure of weight $j-2m$. Thus $M$ is also of pure weight $j-2m$.

    Since $j-2m\neq 0$, we deduce that $1$ is not an eigenvalue for the Frobenius on $M\otimes_{\Z_\ell}\Q_\ell$. Therefore $(M\otimes_{\Z_\ell}\Q_\ell)^G=0$, that is, $H^0(\F,M)$ is finite. 
    
    To prove that $H^1(\F,M)$ is finite, we follow the argument in \cite[p. 781]{colliot1983torsion}.
Write $M=\varprojlim_n M_n$, where $M_n$ are finite $G$-modules. 
Let $\sigma \in G\simeq \hat{\Z}$ be a topological generator.
Then we have \[H^1(\F,M)= \varprojlim_n H^1(\F, M_n)=\varprojlim_n \Coker\left(M_n\xrightarrow{\sigma-1}M_n\right)=\Coker\left(M\xrightarrow{\sigma-1}M\right),\]
where the last identity holds because the finite $G$-modules $M_n$ are finite and so satisfy the Mittag-Leffler condition.
Since $1$ is not an eigenvalue for the Frobenius on $M\otimes_{\Z_\ell}\Q_\ell$, the map  $M\otimes_{\Z_\ell}\Q_\ell\xrightarrow{\sigma-1}M\otimes_{\Z_\ell}\Q_\ell$ is an isomorphism, and the conclusion follows.
\end{proof}

\subsection{In arbitrary codimension}

Here is a key proposition of the paper.

\begin{prop}\label{thm-surjection}
The $\ell$-adic Walker Abel-Jacobi map
\[
\cl_W\colon CH^i(X)_{\alg}\otimes \Z_\ell\rightarrow H^1(\F, N^{i-1}H^{2i-1}(\ov{X},\Z_\ell(i)))
\]
restricts to a surjection
\[
\resizebox{1\hsize}{!}{
$CH^i(X)_{\Falg}\otimes \Z_\ell\twoheadrightarrow \Image\left(H^1(\F, \widetilde{N}^{i-1}H^{2i-1}(\ov{X},\Z_\ell(i)))\rightarrow H^1(\F, N^{i-1}H^{2i-1}(\ov{X},\Z_\ell(i)))\right)$.
}
\]
\end{prop}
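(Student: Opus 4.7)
The inclusion
\[\cl_W(CH^i(X)_{\Falg}\otimes \Z_\ell) \subseteq \Image\bigl(H^1(\F, \widetilde{N}^{i-1}H^{2i-1}(\ov{X}, \Z_\ell(i))) \to H^1(\F, N^{i-1}H^{2i-1}(\ov{X}, \Z_\ell(i)))\bigr)\]
is precisely the content of \cref{lem-cscwaj}, so the task is to establish the reverse inclusion. The plan is to reduce the question to the case of curves, where the $\ell$-adic Abel-Jacobi map is an isomorphism thanks to Lang's theorem, and then transfer the surjectivity through Galois cohomology.

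By \cref{rem-cylinder}, $\widetilde{N}^{i-1}H^{2i-1}(\ov{X},\Z_\ell(i))$ is the sum of the subgroups $\Gamma_{*}H_1(\ov{C},\Z_\ell)$ over pairs $(C,\Gamma)$ with $C$ a smooth projective geometrically connected $\F$-curve and $\Gamma\in CH^i(C\times X)$. Since $H^{2i-1}(\ov{X},\Z_\ell(i))$ is a finitely generated $\Z_\ell$-module, finitely many pairs $(C_j,\Gamma_j)_{j=1}^n$ already give a surjection of continuous $G$-modules of finite type over $\Z_\ell$,
\[\bigoplus_{j=1}^n H_1(\ov{C_j},\Z_\ell)\twoheadrightarrow \widetilde{N}^{i-1}H^{2i-1}(\ov{X},\Z_\ell(i)).\]
Applying \cref{lem-GC} to the kernel $K$ of this map gives $H^2(\F,K)=0$, so the long exact sequence of Galois cohomology produces an induced surjection on $H^1(\F,-)$.

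For each curve $C_j$, Lang's theorem yields $H^1(\F,J(\ov{C_j}))=0$, and the Kummer sequence for the Jacobian then makes the $\ell$-adic Abel-Jacobi map
\[\cl_{AJ}\colon CH_0(C_j)_{\hom}\otimes \Z_\ell = J(C_j)(\F)\otimes\Z_\ell\xrightarrow{\sim} H^1(\F, H_1(\ov{C_j},\Z_\ell))\]
an isomorphism. Combining this with the correspondence compatibility of the Walker map (\cref{lem-WAJ}(2)) and the identity $\cl_W=\cl_{AJ}$ on curves (\cref{lem-WAJ}(1)) gives a commutative square whose top row is the direct sum of these isomorphisms, whose right-hand vertical map is surjective onto the image of $H^1(\F,\widetilde{N}^{i-1}H^{2i-1}(\ov{X},\Z_\ell(i)))$ inside $H^1(\F,N^{i-1}H^{2i-1}(\ov{X},\Z_\ell(i)))$, and whose left-hand vertical map lands inside $CH^i(X)_{\Falg}\otimes\Z_\ell$ (an instance of \cref{lem-cylindercycles}). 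A diagram chase then yields the reverse inclusion.

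The main technical step is the passage from the geometric surjection onto $\widetilde{N}^{i-1}H^{2i-1}$ to a surjection after applying $H^1(\F,-)$; this is exactly where \cref{lem-GC} is indispensable, since without such a cohomological-dimension-one input one could not guarantee that every class in $H^1(\F,\widetilde{N}^{i-1})$ is hit by curve classes. Everything else is assembled from previously established pieces: finite generation of $\ell$-adic cohomology, Lang-theorem surjectivity of $\cl_{AJ}$ for curves, and the functoriality statements \cref{lem-WAJ}(1)(2) and \cref{lem-cylindercycles}.
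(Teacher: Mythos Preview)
Your proof is correct and follows essentially the same strategy as the paper: use \cref{lem-cscwaj} for the inclusion, then for surjectivity pull the problem back to a generating family where $\cl_{AJ}$ is known to be an isomorphism, apply \cref{lem-GC} to pass the geometric surjection through $H^1(\F,-)$, and assemble via the functoriality of \cref{lem-WAJ}.

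The only notable difference is the choice of generating family for $\widetilde{N}^{i-1}H^{2i-1}(\ov{X},\Z_\ell(i))$. You invoke \cref{rem-cylinder} to write it as a sum of images $\Gamma_*H_1(\ov{C},\Z_\ell)$ over correspondences on geometrically connected curves, and then use the $i=d$ case of \cref{prop-codim1d} (the Abel--Jacobi isomorphism for zero-cycles on curves, which you rederive via Lang's theorem). The paper instead uses the description from \cref{lem-cscgalois} in terms of push-forwards $f_*H^1(\ov{T},\Z_\ell(1))$ along morphisms $f\colon T\to X$ from smooth projective $(d-i+1)$-folds, and applies the $i=1$ case of \cref{prop-codim1d} (the Abel--Jacobi isomorphism for divisors), together with \cref{lem-abc1} to ensure $f_*CH^1(T)_{\hom}\subset CH^i(X)_{\Falg}$. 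Your route is slightly more direct in that it mirrors the proof of \cref{lem-cscwaj} exactly and the fact that the image lies in $CH^i(X)_{\Falg}$ is immediate from \cref{lem-cylindercycles}; the paper's route avoids correspondences in favour of actual morphisms but then needs \cref{lem-abc1}. Both are equally valid and neither is materially simpler.
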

\begin{rem}
As shown by \cite[Theorems 1.3, 1.4]{scavia2022cohomology},
the $\ell$-adic Walker Abel-Jacobi map is not necessarily surjective. The combination of \Cref{benoist-ottem-4.3} and \cite[Proposition 3.8]{scavia2022cohomology} provides further examples of non-surjectivity. This is in contrast to the fact that the complex Walker Abel-Jacobi map is always surjective.
\end{rem}

For the proof, we need the following lemma.

\begin{lem}\label{prop-codim1d}
Let $i\in \{1,d\}$. Then the $\ell$-adic Walker Abel-Jacobi map and $\ell$-adic Abel-Jacobi map for cycles of codimension $i$ coincide and they are isomorphisms.
\end{lem}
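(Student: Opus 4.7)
The equality $\cl_W = \cl_{AJ}$ for $i \in \{1, d\}$ is already contained in Lemma \ref{lem-WAJ}(1), together with the identifications $N^0 H^1 = H^1$ and $N^{d-1}H^{2d-1} = H^{2d-1}$ (Lefschetz). So the task reduces to proving that $\cl_{AJ}$ is an isomorphism in each case. In both cases, the strategy is to identify the source with the group of $\F$-points of a suitable abelian variety over $\F$ and the target with the Galois cohomology of its Tate module, and then to invoke Lang's theorem.

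For $i=1$, I would set $A := \Pic^0_{X/\F}$. The proof of Lemma \ref{lem-abc1} shows that the existence of the Poincar\'e bundle (which follows from Lang-Weil) yields $CH^1(X)_{\alg} = CH^1(X)_{\hom} = A(\F)$, and there is a canonical isomorphism $T_\ell A \cong H^1(\ov{X}, \Z_\ell(1))$. Since $A$ is an abelian variety over a finite field, Lang's theorem gives $H^1(\F, A) = 0$, so the Kummer sequences $0 \to A[\ell^n] \to A \xrightarrow{\ell^n} A \to 0$ yield isomorphisms $A(\F)/\ell^n \xrightarrow{\sim} H^1(\F, A[\ell^n])$ compatibly in $n$. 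Passing to the inverse limit, and using that $A(\F)$ is finite, produces an isomorphism
\[
A(\F) \otimes \Z_\ell \xrightarrow{\sim} H^1(\F, H^1(\ov{X}, \Z_\ell(1))),
\]
which coincides with $\cl_{AJ}$ by naturality of the Kummer sequence on $X$ (the $\cl_{AJ}$ map in codimension $1$ is built from exactly this Kummer sequence combined with the Hochschild-Serre spectral sequence).

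For $i=d$, the same strategy applies with $B := \on{Alb}_X$ in place of $A$. By Lemma \ref{lem-abc0}, we have $CH^d(X)_{\alg} = CH^d(X)_{\hom} = A_0(X)$, the group of zero-cycles of degree zero modulo rational equivalence; Poincar\'e duality on $\ov{X}$ identifies $T_\ell B$ with $H^{2d-1}(\ov{X}, \Z_\ell(d))$. The Lang-theorem argument above then gives $B(\F) \otimes \Z_\ell \cong H^1(\F, H^{2d-1}(\ov{X}, \Z_\ell(d)))$. The remaining ingredient I need is that the Albanese map induces an isomorphism $A_0(X) \otimes \Z_\ell \xrightarrow{\sim} B(\F) \otimes \Z_\ell$; this is a theorem of Milne (a Rojtman-type result for zero-cycles on smooth projective varieties over finite fields), together with the finiteness of $A_0(X)$ due to Kato-Saito.

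The main point that needs care is verifying that the abelian-variety-based isomorphisms thus constructed really do coincide with Jannsen's construction of $\cl_{AJ}$; this amounts to tracking that both are induced by the same Kummer sequences, and is built into the usual derivation of the Abel-Jacobi map from the Hochschild-Serre spectral sequence. The main substantive external input is Milne's theorem in the codimension $d$ case; the codimension $1$ case is entirely standard.
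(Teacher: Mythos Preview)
Your proposal is correct. For $i=1$, your Lang-theorem argument on $\Pic^0_{X/\F}$ is exactly the content of the reference the paper cites (Buhler--Schoen--Top, Lemma 1.12), so there is no real difference in that case.

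For $i=d$, however, you and the paper take genuinely different routes. The paper invokes the theorem of Colliot-Th\'el\`ene--Sansuc--Soul\'e that the full cycle class map $CH^d(X)\otimes\Z_\ell \to H^{2d}(X,\Z_\ell(d))$ is an isomorphism, and then reads off the isomorphism for $\cl_{AJ}$ from the Cartesian square coming from the Hochschild--Serre spectral sequence (the codimension-$d$ analogue of diagram (\ref{hs-intro})). This is a clean black-box argument with no compatibility checks needed. Your route via the Albanese variety, Lang's theorem, and Milne's Rojtman-type theorem is more geometric and makes the abelian-variety structure explicit, at the cost of needing to verify that the Kummer/Albanese construction really computes Jannsen's $\cl_{AJ}$ (which, as you note, is standard but not entirely free). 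One simplification: you do not need Kato--Saito finiteness of $A_0(X)$. Since $A_0(X)=CH^d(X)_{\Falg}$ is torsion by \cref{algtorsion}, one already has $A_0(X)\otimes\Z_\ell = A_0(X)\{\ell\}$, and Milne's isomorphism on $\ell$-primary torsion suffices.
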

\begin{proof}
By \cref{lem-WAJ}(1), the maps $\cl_W$ and $\cl_{AJ}$ coincide.
We now prove that $\cl_{AJ}$ is an isomorphism. If $i=1$, this is \cite[Lemma 1.12]{buhler1997cycles}. The case $i=d$ follows from (\ref{hs-intro}) and the fact that the cycle class map (\ref{strong-tate}) is an isomorphism for zero-cycles; see \cite[Theorem 5]{colliot1983torsion}.
\end{proof}

\begin{proof}[Proof of \cref{thm-surjection}]
By \cref{lem-cscwaj}, it remains to show the surjectivity of the induced map.
By \cref{lem-cscgalois}, 
\[
\widetilde{N}^{i-1} H^{2i-1}(\ov{X},\Z_\ell(i)) = \sum_{f\colon T\to X} \Image\left(f_* \colon H^{1}(\ov{T},\Z_\ell(1))\rightarrow H^{2i-1}(\ov{X},\Z_\ell(i))\right),
\]
where the sum is over all smooth projective connected $\F$-varieties  $T$ of dimension $d-i+1$ and $f\colon T\to X$ is a morphism.
Using \cref{lem-WAJ} (1) and (2), \cref{lem-abc1}, \cref{lem-GC}, and \cref{prop-codim1d}, we obtain a commutative diagram:
\[
\begin{tikzcd}
\bigoplus_{f\colon T\rightarrow X} CH^1(T)_{\hom}\otimes \Z_\ell\ar[r,"\cl_{AJ}","\sim"']\ar[dd,"(f_*)"]& \bigoplus_{f\colon T\rightarrow X} H^1(\F, H^1(\ov{T},\Z_\ell(1)))\ar[twoheadrightarrow]{d} \ar[dd, bend left=75, looseness=3.5, "(f_*)"]\\
& H^1(\F, \widetilde{N}^{i-1}H^{2i-1}(\ov{X},\Z_\ell(i)))\ar[d]\\
CH^i(X)_{\Falg}\otimes \Z_\ell\ar[r,"\cl_W"] & H^1(\F, N^{i-1}H^{2i-1}(\ov{X},\Z_\ell(i))).
\end{tikzcd}
\]
This finishes the proof.
\end{proof}

\begin{cor}\label{cor-surjection}
For all $i\geq 0$, the $\ell$-adic Walker Abel-Jacobi map over $\overline{\F}$
\[
\cl_W\colon CH^i(\overline{X})_{\alg}\otimes \Z_\ell\to J^{2i-1}_{W,\ell}(\ov{X}).
\]
is surjective.
\end{cor}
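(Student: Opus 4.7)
The plan is to deduce the corollary from \cref{thm-surjection} by passing to the filtered colimit over finite extensions $K/\F$ inside $\ov{\F}$. First I would identify the two sides in the colimit. On the left, since tensor product with $\Z_\ell$ commutes with filtered colimits, and since any codimension-$i$ cycle on $\ov{X}$ together with any Fulton-algebraic trivialization data (a smooth parameter scheme, a cycle on its product with $X$, and two rational points) is defined over some finite extension of $\F$, we have
\[
CH^i(\ov{X})_{\alg}\otimes \Z_\ell = CH^i(\ov{X})_{\Falg}\otimes \Z_\ell = \varinjlim_K \left(CH^i(X_K)_{\Falg}\otimes \Z_\ell\right).
\]
On the right, $J^{2i-1}_{W,\ell}(\ov{X}) = \varinjlim_K H^1(K,N^{i-1}H^{2i-1}(\ov{X},\Z_\ell(i)))$ by definition, and by the field-extension compatibility of $\cl_W$ (\cref{lem-WAJ}(3)) the direct limit of the $K$-level Walker Abel-Jacobi maps is precisely the Walker Abel-Jacobi map over $\ov{\F}$.

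Next, I would analyze the image appearing in \cref{thm-surjection}. The short exact sequence of continuous $G$-modules
\[
0\to \widetilde{N}^{i-1}H^{2i-1}(\ov{X},\Z_\ell(i))\to N^{i-1}H^{2i-1}(\ov{X},\Z_\ell(i))\to Q\to 0,
\]
with $Q$ finite by \cref{lem-csc}, yields for each $K$ an exact segment
\[
H^1(K,\widetilde{N}^{i-1}H^{2i-1}(\ov{X},\Z_\ell(i)))\to H^1(K,N^{i-1}H^{2i-1}(\ov{X},\Z_\ell(i)))\to H^1(K,Q).
\]
Since $Q$ is finite, any class in $H^1(K,Q)$ factors through the action of a finite quotient of $G_K$ and is killed upon restriction to a sufficiently large finite extension $K'/K$; thus $\varinjlim_K H^1(K,Q)=0$. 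Consequently, after passing to the colimit, the map $\varinjlim_K H^1(K,\widetilde{N}^{i-1}H^{2i-1})\to J^{2i-1}_{W,\ell}(\ov{X})$ is surjective.

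Combining the two observations, the colimit of the surjections supplied by \cref{thm-surjection} gives the desired surjection
\[
\cl_W\colon CH^i(\ov{X})_{\alg}\otimes \Z_\ell \twoheadrightarrow J^{2i-1}_{W,\ell}(\ov{X}).
\]
There is no serious obstacle: the argument is simply a colimit manipulation, with the only subtle input being the standard fact that Galois cohomology of a finite module vanishes over $\ov{\F}$, which closes the gap between the image of $H^1(K,\widetilde{N}^{i-1})$ and all of $H^1(K,N^{i-1})$ after passing to the limit.
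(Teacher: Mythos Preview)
Your proposal is correct and follows essentially the same route as the paper's proof: both pass to the filtered colimit over finite extensions $K/\F$, invoke \cref{thm-surjection} at each level, and then use the finiteness of $Q=N^{i-1}H^{2i-1}/\widetilde{N}^{i-1}H^{2i-1}$ from \cref{lem-csc} to show that $\varinjlim_K H^1(K,Q)=0$. The only cosmetic difference is that the paper cites \cite[\S 2.2, Corollary 1]{serre1997galois} for this last vanishing, whereas you spell out the direct argument that every class in $H^1(K,Q)$ dies after restriction to a suitable finite extension.
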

\begin{proof}
Note $CH^i(\ov{X})_{\alg}=\varinjlim_{K/\F} CH^i(X_K)_{\Falg}$, where $K/\F$ runs over all finite extensions.
By \cref{thm-surjection},
it suffices to show that the natural map
    \[\varinjlim_{K/\F}H^1(K, \widetilde{N}^{i-1}H^{2i-1}(\ov{X},\Z_\ell(i)))\rightarrow J^{2i-1}_{W,\ell}(\ov{X})\]
is surjective.
For this, it is enough to show that the group
\[
\varinjlim_{K/\F} H^1\left(K, N^{i-1}H^{2i-1}(\ov{X},\Z_\ell(i)))/\widetilde{N}^{i-1}H^{2i-1}(\ov{X},\Z_\ell(i))\right)
\]
is zero.
By \Cref{lem-csc}, $N^{i-1}H^{2i-1}(\ov{X},\Z_\ell(i))/\widetilde{N}^{i-1}H^{2i-1}(\ov{X},\Z_\ell(i))$ is finite, hence the result follows from \cite[\S 2.2, Corollary 1]{serre1997galois}.
\end{proof}

\subsection{In codimension \texorpdfstring{$2$}{2}}\label{subsection-WAJcodim2}

We now specialize to the case when $i=2$. We call the $\ell$-adic Walker Abel-Jacobi map in codimension $2$ the $\ell$-adic algebraic Abel-Jacobi map, and write $\cl_a$ (resp. $J^3_{a,\ell}(\ov{X})$) for $\cl_W$ (resp. $J^3_{W,\ell}(\ov{X})$).
This is in analogy with the situaion over the complex numbers, where for codimension $2$-cycles the Walker Abel-Jacobi map agrees with the algebraic Abel-Jacobi map constructed by Lieberman \cite{lieberman1972intermediate}; see \cite[Introduction]{suzuki2021factorization}.

Recall that, by the  Merkurjev--Suslin Theorem \cite{merkurjev1982k-cohomology} (see \cite[Lemma 3.12]{kahn2012classes}), 
\begin{equation}\label{merkurjev-suslin-consequence}
    \text{The group $H^3(\ov{X},\Z_\ell(2))/N^1H^3(\ov{X},\Z_\ell(2)$ is torsion-free.}
\end{equation}
It follows that
\begin{equation}\label{merkurjev-suslin-consequence2}
    [H^3(\ov{X},\Z_\ell(2))/N^1H^3(\ov{X},\Z_\ell(2))]^G=0.
\end{equation}
Indeed, this group is torsion-free, and it is finite by \Cref{lem-frobenius}.

We have the following commutative diagram:
\begin{equation}\label{codim2}
    \begin{tikzcd}
CH^2(X)_{\alg}\otimes \Z_\ell\ar[hookrightarrow]{r}\ar[d,"\cl_a"]& CH^2(X)_{\hom}\otimes \Z_\ell\ar[hookrightarrow]{r}\ar[d,"\cl_{AJ}"]& CH^2(X)\otimes \Z_\ell\ar[d,"(\ref{strong-tate})"]\\
H^1(\F,N^1H^3(\ov{X},\Z_\ell(2)))\ar[hookrightarrow]{r} & H^1(\F,H^3(\ov{X},\Z_\ell(2)))\ar[hookrightarrow]{r} &H^4(X,\Z_\ell(2)),
\end{tikzcd}
\end{equation}
where the injectivity of the bottom left horizontal map follows from (\ref{merkurjev-suslin-consequence2}).

\begin{lem}\label{lem-algajinj}
The $\ell$-adic algebraic Abel-Jacobi map is injective:
\[
\cl_a\colon CH^2(X)_{\alg}\otimes \Z_\ell\hookrightarrow H^1(\F,N^1H^3(\ov{X},\Z_\ell(2))).
\]
\end{lem}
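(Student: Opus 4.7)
The plan is to use the commutative diagram (\ref{codim2}) to reduce the injectivity of $\cl_a$ to the injectivity of the $\ell$-adic cycle class map on $CH^2(X)_{\alg}\otimes\Z_\ell$, and then to descend to $\ov{\F}$ and close the argument via Colliot-Th\'el\`ene--Kahn's map $\phi_\ell$.

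Since $\Z_\ell$ is flat over $\Z$, the top row of (\ref{codim2}) is injective; combined with the injectivity of the bottom-row horizontal arrows (recalled immediately before the lemma), the commutativity forces $\cl_a$ to be injective if and only if the composition $CH^2(X)_{\alg}\otimes\Z_\ell\to H^4(X,\Z_\ell(2))$ along the top and right of (\ref{codim2}) --- that is, the restriction of the $\ell$-adic cycle class map to algebraically trivial cycles --- is injective. By \Cref{algtorsion}, $CH^2(X)_{\alg}\otimes\Z_\ell=CH^2(X)_{\alg}\{\ell\}$, so the task becomes showing that the cycle class map is injective on $\ell$-primary algebraically trivial codimension-$2$ cycles.

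Given $\alpha\in CH^2(X)_{\alg}\{\ell\}$ with $\cl_a(\alpha)=0$, I would first pass to $\ov{\F}$ using the base-change compatibility in \Cref{lem-WAJ}(3): the image $\bar\alpha\in CH^2(\ov{X})_{\alg}\{\ell\}$ has trivial image in $J^3_{a,\ell}(\ov{X})$. By Merkurjev--Suslin, the $\ell$-adic Bloch map $\lambda\colon CH^2(\ov{X})\{\ell\}\hookrightarrow H^3(\ov{X},\Q_\ell/\Z_\ell(2))$ is injective, and its restriction to the algebraically trivial part is compatible with $\cl_a$ over $\ov{\F}$, so $\bar\alpha=0$. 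Hence $\alpha$ lies in $\Ker\bigl(CH^2(X)\to CH^2(\ov{X})\bigr)\{\ell\}$, where Colliot-Th\'el\`ene--Kahn's map $\phi_\ell$ from \cite[Th\'eor\`eme 6.8]{colliot2013cycles} applies and is injective. One identifies the restriction of $\cl_a$ to this kernel with $\phi_\ell$ composed with the natural morphism $H^1(\F,H^3(\ov{X},\Z_\ell(2))_{\tors})\to H^1(\F,N^1H^3(\ov{X},\Z_\ell(2)))$; the latter is injective by \Cref{lem-frobenius} applied to the torsion-free quotient $N^1H^3(\ov{X},\Z_\ell(2))/H^3(\ov{X},\Z_\ell(2))_{\tors}$, which is pure of weight $-1$. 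Thus $\phi_\ell(\alpha)=0$ and the injectivity of $\phi_\ell$ yields $\alpha=0$.

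The main obstacle I expect is the identification of the restriction of $\cl_a$ to $\Ker\bigl(CH^2(X)\to CH^2(\ov{X})\bigr)\{\ell\}$ with $\phi_\ell$, together with the injectivity of $\cl_a$ over $\ov{\F}$ via its compatibility with the Bloch map on algebraically trivial cycles; both verifications rest on carefully comparing the relevant connecting homomorphisms and their interaction with the coniveau filtration.
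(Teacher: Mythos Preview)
Your reduction in the first two paragraphs is exactly the paper's starting point: using the diagram (\ref{codim2}) and \Cref{algtorsion}, the injectivity of $\cl_a$ is equivalent to the injectivity of the cycle class map
\[
CH^2(X)\{\ell\}\longrightarrow H^4(X,\Z_\ell(2))
\]
restricted to $CH^2(X)_{\alg}\{\ell\}$. But at this point you overlook the one-line finish: this injectivity is precisely the theorem of Colliot-Th\'el\`ene--Sansuc--Soul\'e \cite[Theorem~4]{colliot1983torsion}, which asserts that the $\ell$-adic cycle class map on $CH^2(X)\{\ell\}$ is injective for any smooth variety $X$ over a field. The paper's proof is literally that citation together with the diagram chase you already did.

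Your proposed detour through $\ov{\F}$, the Bloch map, and the Colliot-Th\'el\`ene--Kahn map $\phi_\ell$ is not wrong in spirit, but it is considerably heavier and, as you yourself note, it rests on two unverified compatibilities: that the Walker map $\cl_a$ over $\ov{\F}$ agrees with the Bloch map on algebraically trivial classes, and that the restriction of $\cl_a$ to $\Ker(CH^2(X)\to CH^2(\ov{X}))\{\ell\}$ coincides with $\phi_\ell$ followed by the inclusion. The first of these is close to \Cref{cor-WAJ}, which in the paper is proved \emph{after} the present lemma (indeed using it), so there is a real risk of circularity. The second compatibility is essentially the content of \Cref{prop-twoexactseq}, also downstream. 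So your route, besides being longer, depends on results that logically come later in the paper. The direct appeal to \cite{colliot1983torsion} avoids all of this.
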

\begin{proof}
By a theorem of Colliot-Th\'el\`ene--Sansuc--Soul\'e \cite[Theorem 4]{colliot1983torsion},
the map (\ref{strong-tate}) is injective on torsion.
Since $CH^2(X)_{\alg}$ is torsion by \cref{algtorsion}, the result follows from the diagram (\ref{codim2}).
\end{proof}

\Cref{thm-surjection} now implies the following.

\begin{prop}\label{thm-WAJ}
The $\ell$-adic algebraic Abel-Jacobi map
restricts to an isomorphism:
\[
CH^2(X)_{\Falg}\otimes \Z_\ell\xrightarrow{\sim}
\Image\left(H^1(\F, \widetilde{N}^{1}H^{3}(\ov{X},\Z_\ell(2)))\rightarrow H^1(\F, N^{1}H^{3}(\ov{X},\Z_\ell(2)))\right).
\]
\end{prop}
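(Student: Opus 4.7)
The proposition combines a surjectivity and an injectivity statement, and essentially both are within reach of results already established in the excerpt.

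For surjectivity, my plan is to invoke \Cref{thm-surjection} directly with $i=2$. Indeed, the special case $i=2$ of that proposition says exactly that the $\ell$-adic algebraic Abel-Jacobi map $\cl_a$ restricts to a surjection from $CH^2(X)_{\Falg}\otimes \Z_\ell$ onto the image of $H^1(\F,\widetilde{N}^1H^3(\ov{X},\Z_\ell(2)))$ in $H^1(\F,N^1H^3(\ov{X},\Z_\ell(2)))$. So nothing new is required on this side; it is just an unpacking of notation.

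For injectivity, the plan is to deduce it from \Cref{lem-algajinj}, which asserts that $\cl_a$ is injective on all of $CH^2(X)_{\alg}\otimes \Z_\ell$. The key observation is that $CH^2(X)_{\Falg}\subset CH^2(X)_{\alg}$ by definition, and both groups are torsion by \Cref{algtorsion}. For torsion abelian groups, the functor $-\otimes_{\Z}\Z_\ell$ coincides with the $\ell$-primary torsion functor $\{\ell\}$, which is exact and in particular preserves injections. Hence the inclusion induces an injection
\[
CH^2(X)_{\Falg}\otimes \Z_\ell \hookrightarrow CH^2(X)_{\alg}\otimes \Z_\ell,
\]
and precomposing the injection $\cl_a$ of \Cref{lem-algajinj} with this inclusion gives the desired injectivity of the restriction.

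Combining both parts, the restriction of $\cl_a$ is an injection onto its image, which by the surjectivity statement is exactly the image of $H^1(\F,\widetilde{N}^1H^3(\ov{X},\Z_\ell(2)))\to H^1(\F,N^1H^3(\ov{X},\Z_\ell(2)))$. There is no genuine obstacle here; the entire content has been done in \Cref{thm-surjection} (which in turn relied on the cylinder-type description \Cref{rem-cylinder} together with \Cref{lem-cylindercycles} and the codimension-$1$ case \Cref{prop-codim1d}) and in \Cref{lem-algajinj} (which used the Colliot-Th\'el\`ene--Sansuc--Soul\'e injectivity on torsion together with the diagram \eqref{codim2}). The proof is therefore essentially a formal assembly step.
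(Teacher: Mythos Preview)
Your proposal is correct and matches the paper's approach: the paper simply notes that \Cref{thm-surjection} (surjectivity) combined with \Cref{lem-algajinj} (injectivity) gives the result, and you have spelled this out in full. One small simplification: since $\Z_\ell$ is flat over $\Z$, the inclusion $CH^2(X)_{\Falg}\hookrightarrow CH^2(X)_{\alg}$ is preserved by $-\otimes\Z_\ell$ without any appeal to torsion, so the detour through $\{\ell\}$ is unnecessary (though not wrong).
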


Over the algebraic closure $\ov{\F}$, we have:

\begin{cor}\label{cor-WAJ}
The $\ell$-adic 
algebraic
Abel-Jacobi map
over $\overline{\F}$
is an isomorphism:
\[
CH^2(\ov{X})_{\alg}\otimes \Z_\ell\xrightarrow{\sim} J^{3}_{a,\ell}(\ov{X}).
\]
Furthermore, this induces an isomorphism:
\[
\left(CH^2(\ov{X})_{\alg}\right)^G\otimes \Z_\ell\xrightarrow{\sim}H^1(\F, N^1H^3(\ov{X},\Z_\ell(2))_{\tf}).
\]
\end{cor}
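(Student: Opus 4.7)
The plan is to deduce both isomorphisms from properties of $\cl_a$ over finite extensions $K/\F$ combined with a Galois descent argument.

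For the first isomorphism, surjectivity is \cref{cor-surjection}. For injectivity, note that $CH^2(\overline{X})_{\alg} = \varinjlim_{K/\F} CH^2(X_K)_{\alg}$, and \cref{lem-algajinj} applied to each finite extension $K/\F$ yields injectivity of $\cl_a\colon CH^2(X_K)_{\alg}\otimes \Z_\ell \hookrightarrow H^1(K, N^1H^3(\overline{X},\Z_\ell(2)))$. Since filtered colimits of injections are injections, the first isomorphism follows.

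For the second isomorphism, the first iso is $G$-equivariant, so taking invariants gives $(CH^2(\overline{X})_{\alg}\otimes \Z_\ell)^G \xrightarrow{\sim} J^3_{a,\ell}(\overline{X})^G$. Since $CH^2(\overline{X})_{\alg}$ is torsion (by the limit of \cref{algtorsion}), the functor $(-\otimes \Z_\ell)$ just extracts the $\ell$-primary part and so commutes with $G$-invariants: $(CH^2(\overline{X})_{\alg}\otimes \Z_\ell)^G = (CH^2(\overline{X})_{\alg})^G \otimes \Z_\ell$. It thus remains to identify $J^3_{a,\ell}(\overline{X})^G$ with $H^1(\F, N^1H^3(\overline{X},\Z_\ell(2))_{\tf})$. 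Writing $M := N^1H^3(\overline{X},\Z_\ell(2))$, the exact sequence $0 \to M_{\tors} \to M \to M_{\tf} \to 0$ produces long exact Galois cohomology sequences. The key vanishing is $\varinjlim_K H^1(K, M_{\tors}) = 0$: indeed $M_{\tors}$ is a finite $\ell$-primary $G$-module, and for $K$ sufficiently large on which Frobenius acts trivially, the restriction maps in the filtered system are multiplication by the relative degrees $[K':K]$, which attain arbitrarily high powers of $\ell$. Combined with $H^2(K, M_{\tors}) = 0$ (\cref{lem-GC}), this yields $J^3_{a,\ell}(\overline{X}) = \varinjlim_K H^1(K, M_{\tf})$.

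Finally, $M_{\tf}$ is pure of weight $-1$ as a subquotient of $H^3(\overline{X},\Z_\ell(2))$ (see the proof of \cref{lem-frobenius}), so Frobenius has no eigenvalue $1$ on $M_{\tf} \otimes \Q_\ell$, whence $M_{\tf}^K = 0$ for every finite $K/\F$. For each finite Galois $K/\F$, the Hochschild-Serre spectral sequence, together with $H^2(\F, M_{\tf}) = 0$ (\cref{lem-GC}) and this vanishing, collapses to an isomorphism $H^1(\F, M_{\tf}) \xrightarrow{\sim} H^1(K, M_{\tf})^{\on{Gal}(K/\F)}$. Passing to the colimit over $K$ and identifying $\varinjlim_K H^1(K, M_{\tf})^{\on{Gal}(K/\F)}$ with $(\varinjlim_K H^1(K, M_{\tf}))^G$ (by representing each $G$-invariant class at a sufficiently large finite Galois level, using that its stabilizer is open) produces the desired isomorphism $H^1(\F, M_{\tf}) \cong J^3_{a,\ell}(\overline{X})^G$. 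The main technical subtlety lies in this last commutation of $G$-invariants with the filtered colimit, which ultimately rests on continuity of the induced $G$-action on the colimit and a Galois closure argument to pass from pointwise invariance under each $g$ to simultaneous invariance over a single sufficiently large $K$.
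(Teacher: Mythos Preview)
Your proof is correct. For the first isomorphism your argument is essentially the paper's: surjectivity is \cref{cor-surjection}, and for injectivity you invoke \cref{lem-algajinj} at each finite level and pass to the filtered colimit, whereas the paper instead passes the isomorphism of \cref{thm-WAJ} to the limit using the argument in the proof of \cref{cor-surjection}. These come to the same thing.

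For the second isomorphism your route genuinely differs. The paper simply quotes \cite[Lemma 1.1.5(ii)]{schoen1999image} together with \cref{lem-frobenius} to obtain $J^3_{a,\ell}(\ov{X})^G = H^1(\F, N^1H^3(\ov{X},\Z_\ell(2))_{\tf})$ in one stroke. You instead unwind this by hand: kill $\varinjlim_K H^1(K,M_{\tors})$ using finiteness of $M_{\tors}$, use the weight argument to get $M_{\tf}^K=0$, and then run inflation--restriction to identify $H^1(\F,M_{\tf})$ with $H^1(K,M_{\tf})^{\Gal(K/\F)}$ at every level. Your approach is self-contained and makes the mechanism transparent; the paper's citation is shorter but hides the content. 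Two minor remarks: in the inflation--restriction step you do not actually need $H^2(\F,M_{\tf})=0$, since $M_{\tf}^K=0$ already annihilates both edge terms in the five-term sequence; and the commutation of $G$-invariants with the colimit at the end is cleaner than your phrasing suggests, because the transition maps $H^1(K,M_{\tf})\to H^1(K',M_{\tf})$ are injective (again by $M_{\tf}^{K'}=0$), so a $G$-invariant class represented at level $K$ is already $\Gal(K/\F)$-invariant there, with no need to enlarge $K$.
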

\begin{proof}
    The first statement is immediate from \cref{thm-WAJ} and the proof of \cref{cor-surjection}.
    The second statement is then a consequence of the identity
    \[
J^3_{a,\ell}(\ov{X})^G
=H^1(\F, N^{1}H^{3}(\overline{X},\Z_\ell(2))_{\tf})
    \]
    which itself follows from \cite[Lemma 1.1.5 (ii)]{schoen1999image} and \Cref{lem-frobenius}.
\end{proof}

Recall the following exact sequence, due to Colliot-Th\'el\`ene and Kahn \cite[Theorem 6.8]{colliot2013cycles}:
\begin{align}\label{ct-kahn}
    \nonumber &0\to \on{Ker}\left(CH^2(X)\to CH^2(\ov{X})\right)\{\ell\}\xrightarrow{\varphi_{\ell}} H^1(\F, H^3(\ov{X},\Z_{\ell}(2))_{\on{tors}})\\
    &\to \on{Ker}\left(H^3_{\on{nr}}(X,\Q_{\ell}/\Z_{\ell}(2))\to  H^3_{\on{nr}}(\ov{X},\Q_{\ell}/\Z_{\ell}(2))\right) \\ 
    \nonumber &\to\on{Coker}\left(CH^2(X)
    \to CH^2(\ov{X})^G\right)\{\ell\}\to 0.
\end{align}
We now refine the sequence (\ref{ct-kahn}).

\begin{prop}\label{thm-sequence}
There is an exact sequence:
\begin{align}\label{wajsequence}
&\nonumber 0 \rightarrow \Ker\left(CH^2(X)\rightarrow CH^2(\overline{X})\right)\{\ell\}\xrightarrow{\psi_\ell} H^1(\F, H^3(\overline{X}, \Z_\ell(2))_{\tors})\\
&\rightarrow \Coker\left(CH^2(X)_{\alg}\otimes \Z_\ell\xrightarrow{\cl_a} H^1(\F, N^1H^3(\overline{X},\Z_\ell(2)))\right)\\
&\nonumber\rightarrow  \Coker\left(CH^2(X)_{\alg}\rightarrow \left(CH^2(\overline{X})_{\alg}\right)^G\right)\{\ell\}\rightarrow 0.
\end{align}
\end{prop}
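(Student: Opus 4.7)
The plan is to compare $\cl_a$ with the base change map $\rho\colon CH^2(X)_{\alg}\to (CH^2(\ov{X})_{\alg})^G$ via the snake lemma and deduce the desired four-term exact sequence.

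First, I would assemble the short exact sequence of continuous $G$-modules
\[0 \to H^3(\ov{X},\Z_\ell(2))_{\tors} \to N^1H^3(\ov{X},\Z_\ell(2)) \to N^1H^3(\ov{X},\Z_\ell(2))_{\tf} \to 0,\]
where $N^1H^3(\ov{X},\Z_\ell(2))_{\tors} = H^3(\ov{X},\Z_\ell(2))_{\tors}$ comes from the Merkurjev-Suslin theorem (the same fact used just below (\ref{codim2})). Taking Galois cohomology, the $H^2$ terms vanish by \cref{lem-GC}; and $H^0(\F, N^1H^3(\ov{X},\Z_\ell(2))_{\tf}) = 0$ since this group is simultaneously torsion-free (as a subgroup of a torsion-free module) and finite (the module is a subquotient of $H^3(\ov{X},\Z_\ell(2))$ and $3 \neq 2\cdot 2$, so \cref{lem-frobenius} applies). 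Thus I obtain
\[0 \to H^1(\F, H^3(\ov{X},\Z_\ell(2))_{\tors}) \to H^1(\F, N^1H^3(\ov{X},\Z_\ell(2))) \xrightarrow{\pi} H^1(\F, N^1H^3(\ov{X},\Z_\ell(2))_{\tf}) \to 0,\]
whose right-hand term is identified with $(CH^2(\ov{X})_{\alg})^G \otimes \Z_\ell$ via \cref{cor-WAJ}.

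Second, I would verify that $\pi\circ \cl_a$ equals the $\Z_\ell$-linearization of $\rho$; this is expected from the compatibility of the $\ell$-adic Walker Abel-Jacobi map with field extensions (\cref{lem-WAJ}(3)), together with the construction of the isomorphism in \cref{cor-WAJ} as the composition of the $\ov{\F}$-Walker map with the identification $J^3_{a,\ell}(\ov{X})^G = H^1(\F, N^1H^3(\ov{X},\Z_\ell(2))_{\tf})$. Next, since $CH^2(X)_{\alg}$ and $CH^2(\ov{X})_{\alg}$ are torsion (\cref{algtorsion}) and every element of $\Ker(CH^2(X)\to CH^2(\ov{X}))$ automatically lies in $CH^2(X)_{\alg}$, tensoring with $\Z_\ell$ extracts $\ell$-primary parts and gives
\[\Ker(\rho_\ell) = \Ker(CH^2(X)\to CH^2(\ov{X}))\{\ell\}, \quad \Coker(\rho_\ell) = \Coker(CH^2(X)_{\alg}\to (CH^2(\ov{X})_{\alg})^G)\{\ell\}.\]

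Finally, I would apply the snake lemma to the commutative diagram with exact rows obtained by placing $0\to \Ker(\rho_\ell)\to CH^2(X)_{\alg}\otimes\Z_\ell \to \Image(\rho_\ell)\to 0$ on top of the short exact sequence from the first step, connected vertically by a map $\psi_\ell$ (to be shown to exist), by $\cl_a$, and by the inclusion $\Image(\rho_\ell)\hookrightarrow (CH^2(\ov{X})_{\alg})^G\otimes\Z_\ell$. The injectivity of $\cl_a$ (\cref{lem-algajinj}) forces $\Ker(\psi_\ell) = 0$, and since the right-hand vertical map is an inclusion, the snake sequence collapses to
\[0 \to \Coker(\psi_\ell) \to \Coker(\cl_a) \to \Coker(\rho_\ell)\{\ell\} \to 0,\]
which is exactly (\ref{wajsequence}). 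The main obstacle is the compatibility step $\pi\circ\cl_a = \rho_\ell$, which requires unwinding the construction of the isomorphism in \cref{cor-WAJ} and checking that passage from $\F$ to $\ov{\F}$ commutes with the quotient by the torsion part; everything else is a mechanical diagram chase using the machinery already built up in this section.
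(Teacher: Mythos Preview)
Your proposal is correct and follows essentially the same approach as the paper: both start from the short exact sequence $0\to H^3(\ov{X},\Z_\ell(2))_{\tors}\to N^1H^3(\ov{X},\Z_\ell(2))\to N^1H^3(\ov{X},\Z_\ell(2))_{\tf}\to 0$, pass to Galois cohomology using \cref{lem-frobenius} and \cref{lem-GC}, identify the composite $\pi\circ\cl_a$ with the base change map via \cref{cor-WAJ}, and then read off the exact sequence from the resulting commutative diagram with injective vertical maps. The paper states the compatibility step $\pi\circ\cl_a=\rho_\ell$ as a direct consequence of \cref{cor-WAJ} rather than flagging it as an obstacle, but otherwise the arguments are the same.
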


\begin{proof}
By (\ref{merkurjev-suslin-consequence}), we have $H^3(\ov{X},\Z_\ell(2))_{\tors}\subset N^1H^3(\ov{X},\Z_\ell(2))$, and hence a short exact sequence of $G$-modules:
\[
0\rightarrow H^3(\ov{X},\Z_\ell(2))_{\tors}\rightarrow N^1H^3(\ov{X},\Z_\ell(2))\rightarrow N^1H^3(\ov{X},\Z_\ell(2))_{\tf}\rightarrow 0.
\]
Note that $\left(N^1H^3(\ov{X},\Z_\ell(2))_{\tf}\right)^G=0$ by \Cref{lem-frobenius}.
Hence we have a short exact sequence
\[
\resizebox{1\hsize}{!}{$
0\rightarrow H^1(\F, H^3(\ov{X},\Z_\ell(2))_{\tors})\rightarrow H^1(\F, N^1H^3(\ov{X}, \Z_\ell(2)))\rightarrow H^1(\F, N^1H^3(\ov{X},\Z_\ell(2))_{\tf})\rightarrow 0.
$}
\]
On the other hand, \Cref{cor-WAJ} shows that the composition
\[
CH^2(X)_{\alg}\otimes \Z_\ell \xrightarrow{\cl_a} H^1(\F, N^1H^3(\ov{X},\Z_\ell(2)))\rightarrow H^1(\F, N^1H^3(\ov{X},\Z_\ell(2))_{\tf})
\]
can be identified with the pull-back map $CH^2(X)_{\alg}\otimes \Z_\ell\rightarrow \left(CH^2(\ov{X})_{\alg}\right)^G\otimes \Z_\ell$.
Thus we obtain a commutative diagram of short exact sequences:
\[
\adjustbox{scale=0.75,center}{
\begin{tikzcd}
0\arrow[r] & \Ker\left(CH^2(X)\rightarrow CH^2(\ov{X})\right)\{\ell\} \arrow[r] \arrow{d}{\psi_\ell} & CH^2(X)_{\alg}\otimes \Z_\ell \arrow[r] \arrow{d}{\cl_a} & \Image\left(CH^2(X)_{\alg}\otimes \Z_\ell\rightarrow CH^2(\ov{X})_{\alg}\otimes \Z_\ell\right) \arrow[r] \arrow[d] & 0 \\
0 \arrow[r] & H^1(\F, H^3(\ov{X},\Z_\ell(2))_{\tors}) \arrow[r] & 
H^1(\F, N^1H^3(\ov{X}, \Z_\ell(2)))
\arrow[r] & H^1(\F, N^1H^3(\ov{X},\Z_\ell(2))_{\tf}) \simeq\left(CH^2(\ov{X})_{\alg}\right)^G\otimes \Z_\ell\arrow [r] & 0,
\end{tikzcd}
}
\]
where the vertical arrows are all injective.
One may now construct (\ref{wajsequence}) by a simple diagram chase.
\end{proof}

The following lemma collects results due to Colliot-Th\'el\`ene and Kahn \cite{colliot2013cycles}.

\begin{lem}\label{motivic}
The Hochschild-Serre spectral sequence for the \'etale motivic cohomology group
\[
E^{r,s}_2=H^r(\F, \mathbb{H}^s(\ov{X},\Z(2))) \Rightarrow \mathbb{H}^{r+s}(X,\Z(2))
\]
yields a short exact sequence
\begin{align}\label{motivic-ses}
0\rightarrow H^1(\F,\mathbb{H}^3(\ov{X},\Z(2)))\rightarrow \mathbb{H}^4(X,\Z(2))\rightarrow \mathbb{H}^4(\ov{X},\Z(2))^G\rightarrow 0.
\end{align}
Moreover, the natural map $\mathbb{H}^4(X,\Z(2))\otimes \Z_\ell\rightarrow H^4(X,\Z_\ell(2))$ restricts to an isomorphism
\[
H^1(\F,\mathbb{H}^3(\ov{X},\Z(2)))\otimes \Z_\ell \xrightarrow{\sim} H^1(\F,H^3(\ov{X},\Z_\ell(2))_{\tors}).
\]
Finally, we have a natural injection
\[
\varinjlim_{K/\F}\mathbb{H}^4(X_K, \Z(2))\otimes \Z_\ell\hookrightarrow\mathbb{H}^4(\ov{X},\Z(2))\otimes\Z_\ell,
\]
where $K/\F$ ranges over all finite field extensions.
\end{lem}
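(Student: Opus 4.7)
The lemma assembles three statements, each essentially extracted from the framework of Colliot-Th\'el\`ene and Kahn \cite{colliot2013cycles}. The plan is to derive all three from the Hochschild–Serre spectral sequence for \'etale motivic cohomology, combined with the Bloch–Kato (Beilinson–Lichtenbaum) theorem and some Galois-cohomological bookkeeping specific to finite fields.

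For the short exact sequence (\ref{motivic-ses}), the plan is to set up the Hochschild–Serre spectral sequence for the Bloch cycle complex $\Z(2)$ viewed as an object in the \'etale topology. Since $\cd(\F)=1$, the $E_2$-page is concentrated in the two columns $r=0,1$, and the spectral sequence degenerates to short exact sequences of the form
\[0\to H^1(\F,\mathbb{H}^{s-1}(\ov{X},\Z(2)))\to \mathbb{H}^s(X,\Z(2))\to \mathbb{H}^s(\ov{X},\Z(2))^G\to 0.\]
Specializing to $s=4$ gives (\ref{motivic-ses}). The one technical point is that $\mathbb{H}^s(\ov{X},\Z(2))$ is not obviously of finite type, so continuity of Galois cohomology needs to be controlled; this is handled by decomposing into a torsion part and a torsion-free quotient and using that each factor can be expressed as a filtered colimit of modules of finite type.

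For the isomorphism in (2), the key input is the Beilinson–Lichtenbaum theorem (a consequence of the Bloch–Kato conjecture proved by Voevodsky–Rost): the natural map $\mathbb{H}^3(\ov{X},\Z(2))\otimes\Z_\ell\to H^3(\ov{X},\Z_\ell(2))$ has uniquely divisible kernel and cokernel, and in particular identifies the torsion parts. Applying $H^1(\F,-)$ and using that $H^1(\F,-)$ vanishes on uniquely divisible modules, together with Lemma~\ref{lem-GC} to ensure right exactness on finite-type $\Z_\ell$-modules, one reduces to comparing $H^1(\F,-)$ of the torsion subgroups on both sides; this yields the desired isomorphism. This is essentially \cite[Proposition 6.3]{colliot2013cycles}.

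For the injection (3), the plan is to take the filtered direct limit over finite extensions $K/\F$ of the short exact sequences from part (1). The target of the claimed injection is $\mathbb{H}^4(\ov{X},\Z(2))\otimes\Z_\ell=\varinjlim_K \mathbb{H}^4(\ov{X},\Z(2))^{G_K}\otimes\Z_\ell$, so injectivity reduces to showing that $\varinjlim_K H^1(K,\mathbb{H}^3(\ov{X},\Z(2)))\otimes\Z_\ell=0$. For a continuous $G$-module $M$ of finite type over $\Z_\ell$, the restriction map $H^1(\F,M)\to H^1(K,M)$ is given by the norm $N_{[K:\F]}$, and an elementary computation shows that for any fixed torsion class one can choose $[K:\F]$ divisible by a sufficiently large power of $\ell$ to kill it; passing to the colimit gives the vanishing. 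The main obstacle throughout is controlling the non-finite-type nature of $\mathbb{H}^s(\ov{X},\Z(2))$, which is navigated by working separately with the torsion and torsion-free pieces.
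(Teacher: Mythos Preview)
Your approach to parts (1) and (2) coincides with the paper's: the paper simply cites \cite[Theorem 6.3]{colliot2013cycles} and \cite[Proposition 6.7]{colliot2013cycles} respectively, while you sketch the arguments underlying those references. (Your citation to \cite[Proposition 6.3]{colliot2013cycles} for (2) appears to be a slip for Proposition 6.7.)

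For part (3) the paper takes a shorter route than you do. Rather than attacking $\varinjlim_{K}H^1(K,\mathbb{H}^3(\ov{X},\Z(2)))\otimes\Z_\ell$ directly, it first feeds part (2) back in: since
\[
H^1(K,\mathbb{H}^3(\ov{X},\Z(2)))\otimes\Z_\ell\;\cong\;H^1(K,H^3(\ov{X},\Z_\ell(2))_{\tors})
\]
for every finite extension $K/\F$, and since $H^3(\ov{X},\Z_\ell(2))_{\tors}$ is a \emph{finite} $G$-module, the vanishing $\varinjlim_{K}H^1(K,H^3(\ov{X},\Z_\ell(2))_{\tors})=0$ is immediate from \cite[\S 2.2, Corollary 1]{serre1997galois}. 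Taking the direct limit of (\ref{motivic-ses}) tensored with $\Z_\ell$ then yields the injection. This completely sidesteps the non-finite-type obstacle you flag at the end: your direct argument via the norm description of restriction is workable, but making it rigorous for $\mathbb{H}^3(\ov{X},\Z(2))$ itself would require exactly the torsion/uniquely-divisible splitting you only allude to, whereas reusing (2) makes that analysis unnecessary.
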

\begin{proof}
The first assertion is shown at the end of the proof of \cite[Theorem 6.3]{colliot2013cycles} and the second assertion follows from \cite[Proposition 6.7]{colliot2013cycles} by tensoring with $\Z_\ell$. 
As for the third assertion, note that since the $G$-module $H^3(\ov{X},\Z_\ell(2))_{\tors}$ is finite, by \cite[\S 2.2, Corollary 1]{serre1997galois} we have
\[
\varinjlim_{K/\F}H^1(K, H^3(\ov{X},\Z_\ell(2))_{\tors})=0.
\]
Hence taking the direct limit of (\ref{motivic-ses}) concludes the proof.
\end{proof}

The following proposition, which will not be needed in the rest of the paper, clarifies the relationship between the exact sequences (\ref{wajsequence}) and (\ref{ct-kahn}).

\begin{prop}\label{prop-twoexactseq}
There is a monomorphism
of exact sequences from (\ref{wajsequence}) to (\ref{ct-kahn}),
which identifies $\psi_\ell$ with $\varphi_\ell$.
\end{prop}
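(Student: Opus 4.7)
The plan is to build the vertical arrows of the morphism of four-term exact sequences term by term, verify the commutativity of each square, and check that each vertical arrow is injective. The two leftmost vertical maps will be identities, so the nontrivial content at those positions is the identification $\psi_\ell=\varphi_\ell$; the third and fourth vertical maps must be constructed.

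For the identification $\psi_\ell=\varphi_\ell$, recall that $\psi_\ell$ is the restriction of $\cl_a$ to $\Ker(CH^2(X)\to CH^2(\ov X))\{\ell\}$, which by \Cref{lem-WAJ}(1) coincides (after postcomposing with $H^1(\F,N^1H^3(\ov X,\Z_\ell(2)))\hookrightarrow H^1(\F,H^3(\ov X,\Z_\ell(2)))$) with the restriction of Jannsen's $\ell$-adic Abel-Jacobi map $\cl_{AJ}$, and hence arises as the connecting map of the $\ell$-adic Hochschild-Serre spectral sequence applied to the $\ell$-adic cycle class map. On the other hand, $\varphi_\ell$ is defined by Colliot-Th\'el\`ene--Kahn via the étale-motivic Hochschild-Serre spectral sequence (\ref{motivic-ses}) applied to the motivic cycle class map $\mathbb{H}^4(X,\Z(2))\twoheadrightarrow CH^2(X)$, combined with the identification $H^1(\F,\mathbb{H}^3(\ov X,\Z(2)))\otimes\Z_\ell\simeq H^1(\F,H^3(\ov X,\Z_\ell(2))_{\tors})$ of \Cref{motivic}. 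The desired equality then follows from the compatibility of the comparison map from étale motivic cohomology to $\ell$-adic cohomology with the respective Hochschild-Serre edge maps.

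For the fourth vertical map, the canonical inclusion $(CH^2(\ov X)_{\alg})^G\hookrightarrow CH^2(\ov X)^G$ induces a homomorphism on cokernels, which is injective by a direct diagram chase using the fact that any $\alpha\in CH^2(X)$ whose restriction to $\ov X$ lies in $(CH^2(\ov X)_{\alg})^G$ and is in the image of $CH^2(X)$ already differs from something in $CH^2(X)_{\alg}$ by an element of the kernel of $CH^2(X)\to CH^2(\ov X)$. For the third vertical map, we use the diagrams (\ref{codim2}) and (\ref{hs-intro}): a class $\beta\in H^1(\F,N^1H^3(\ov X,\Z_\ell(2)))$ maps naturally into $H^4(X,\Z_\ell(2))$ via the edge homomorphism, and the resulting class modulo the image of the cycle class map defines, by \cite[Theorem 2.2]{colliot2013cycles}, a class in $H^3_{\nr}(X,\Q_\ell/\Z_\ell(2))/\text{max.\ div}$; since its image in $H^4(\ov X,\Z_\ell(2))$ vanishes, this class dies in $H^3_{\nr}(\ov X,\Q_\ell/\Z_\ell(2))$. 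The factorization through $\Coker(\cl_a)$ is a consequence of the commutativity of (\ref{codim2}), which ensures that classes in the image of $\cl_a$ are already cycle classes on $X$ and thus trivial in $H^3_{\nr}$. Commutativity of the remaining squares, as well as injectivity of the third vertical map, follows from combining (\ref{codim2}), (\ref{hs-intro}), \cite[Theorem 2.2]{colliot2013cycles}, and \Cref{lem-frobenius} via a diagram chase.

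The main obstacle is likely the verification of $\psi_\ell=\varphi_\ell$: although both maps are morally the same connecting homomorphism, making the equality precise requires carefully matching the construction of $\cl_a$ via the coniveau spectral sequence (as in \Cref{WAJconstruction}) with the étale-motivic construction of $\varphi_\ell$ in \cite{colliot2013cycles}, using the compatibility between étale motivic cohomology and $\ell$-adic cohomology. A secondary difficulty is establishing injectivity of the third vertical arrow, which reduces via \Cref{lem-algajinj} and \cite[Theorem 2.2]{colliot2013cycles} to showing that any $\beta\in H^1(\F,N^1H^3(\ov X,\Z_\ell(2)))$ whose image in $H^4(X,\Z_\ell(2))$ is a cycle class is in fact of the form $\cl_a(\alpha)$ for some $\alpha\in CH^2(X)_{\alg}\otimes\Z_\ell$.
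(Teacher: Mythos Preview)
Your plan of building the morphism term-by-term runs into a genuine gap at the third term, and the paper's proof takes a quite different route precisely to avoid it.

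The third vertical map you propose does not land in the correct target. You send $\beta\in H^1(\F,N^1H^3(\ov X,\Z_\ell(2)))$ to its image in $H^4(X,\Z_\ell(2))$ and then invoke \cite[Theorem 2.2]{colliot2013cycles}. But that theorem only identifies the \emph{torsion} of $\Coker\bigl(CH^2(X)\otimes\Z_\ell\to H^4(X,\Z_\ell(2))\bigr)$ with the quotient $H^3_{\nr}(X,\Q_\ell/\Z_\ell(2))/\text{(max.\ div.)}$, whereas the third term of (\ref{ct-kahn}) is the honest kernel $\Ker\bigl(H^3_{\nr}(X,\Q_\ell/\Z_\ell(2))\to H^3_{\nr}(\ov X,\Q_\ell/\Z_\ell(2))\bigr)$. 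There is no canonical lift from the quotient to this kernel in general. Even setting this aside, your stated reduction for injectivity---that every $\beta\in H^1(\F,N^1H^3(\ov X,\Z_\ell(2)))$ which is a cycle class must be of the form $\cl_a(\alpha)$ with $\alpha\in CH^2(X)_{\alg}\otimes\Z_\ell$---is not established: a cycle $\alpha$ with $\cl(\alpha)\in H^1(\F,H^3(\ov X,\Z_\ell(2)))$ is only guaranteed to be homologically trivial, not algebraically trivial.

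The paper circumvents both problems by passing to \'etale motivic cohomology. The key observation is that the kernel of the comparison map
\[
\theta\colon \mathbb{H}^4(X,\Z(2))\otimes\Z_\ell\longrightarrow H^4(X,\Z_\ell(2))
\]
is uniquely divisible (this uses \cite[Propositions 3.4, Corollary 3.5]{kahn2012classes} together with finiteness of $H^3(X,\Z_\ell(2))$). Hence every torsion subgroup of $H^4(X,\Z_\ell(2))$ lifts \emph{uniquely} to $\mathbb{H}^4(X,\Z(2))\otimes\Z_\ell$. Since $H^1(\F,N^1H^3(\ov X,\Z_\ell(2)))$ is finite (\Cref{lem-frobenius}), it lifts uniquely, and the lift of $\cl_a$ agrees with the restriction of the motivic cycle map. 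Passing to the limit over finite extensions $K/\F$ and using \Cref{motivic}, one obtains a sub-diagram (\ref{5.14-2}) of the Colliot-Th\'el\`ene--Kahn diagram (\ref{5.14-1}) with exact rows. The snake lemma applied to the large diagram yields (\ref{ct-kahn}); applied to the sub-diagram it yields (\ref{wajsequence}); and the inclusion of diagrams gives the desired monomorphism of exact sequences, with $\psi_\ell=\varphi_\ell$ falling out automatically. The unique-lifting step is the idea missing from your proposal.
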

\begin{rem}
It follows from \cref{prop-twoexactseq} that if the group $H^3_{\nr}(X,\Q_\ell/\Z_\ell(2))$
vanishes, 
then $\cl_a\colon CH^2(X)_{\alg}\otimes \Z_\ell\rightarrow H^1(\F,N^1H^3(\ov{X},\Z_\ell(2)))$ is an isomorphism.
\end{rem}

\begin{proof}[Proof of \cref{prop-twoexactseq}]
We claim that, for every torsion group $T$ and every homomorphism $f\colon T\rightarrow H^4(X,\Z_\ell(2))$, there exists a unique lift $f'\colon T\rightarrow \mathbb{H}^4(X,\Z(2))\otimes \Z_\ell$ of $f$ along the natural map \[\theta\colon \mathbb{H}^4(X,\Z(2))\otimes \Z_\ell\rightarrow H^4(X,\Z_\ell(2)).\]

To prove the claim, we first note that $f(T)$ is contained in the image of $\theta$ because $\Coker(\theta)$ is torsion-free by \cite[Corollary 3.5]{kahn2012classes}. Now let $K(2)$ be the mapping cone of the morphism \[\Z(2)\overset{L}{\otimes}\Z_\ell\rightarrow \Z_\ell(2)^c=R\varprojlim_r \mu_{\ell^r}^{\otimes 2}\] of complexes of abelian sheaves on $X_{\text{\'et}}$ given in \cite[\S 1.4]{kahn2002GL}.
Then $\mathbb{H}^4(X,K(2))$ is uniquely divisible by \cite[Proposition 3.4]{kahn2012classes}.
On the other hand, the group $H^3(X,\Z_\ell(2))$ is finite by \Cref{lem-frobenius} and the Hochschild-Serre spectral sequence. Thus, in the exact sequence
\[
H^3(X,\Z_\ell(2))\rightarrow \mathbb{H}^4(X,K(2))\rightarrow \mathbb{H}^4(X,\Z(2))\otimes \Z_\ell \xrightarrow{\theta} H^4(X,\Z_\ell(2)),
\]
the first map is zero. Therefore $\Ker(\theta)$ is uniquely divisible. 
Combining this with the fact that $f(T)\subset \on{Im}\theta$, it follows that the surjection $\theta^{-1}(f(T))\rightarrow f(T)$ admits a unique splitting $\sigma\colon f(T)\rightarrow \theta^{-1}(f(T))$ and $f'\coloneqq \sigma\circ f$ is the unique lift of $f$ along $\theta$. This proves the claim.

By \Cref{lem-frobenius}, the subgroup $H^1(\F,N^1H^3(\ov{X},\Z_\ell(2)))\subset H^4(X,\Z_\ell(2))$ is finite: we let $H^1(\F,N^1H^3(\ov{X},\Z_\ell(2)))'\subset \mathbb{H}^4(X,\Z(2))\otimes \Z_\ell$ be its unique lift along $\theta$. By \cref{algtorsion}, the group $CH^2(X)_{\alg}$ is torsion, and hence $\cl_a$ uniquely lifts to a homomorphism  \[\on{cl}_a'\colon CH^2(X)_{\alg}\otimes \Z_\ell\rightarrow H^1(\F,N^1H^3(\ov{X},\Z_\ell(2)))'.\] 
By the uniqueness property, the lift $\cl_a'$ coincides with the restriction of the cycle map $CH^2(X)\otimes \Z_\ell\rightarrow \mathbb{H}^4(X,\Z(2))\otimes \Z_\ell$ to $CH^2(X)_{\alg}\otimes \Z_\ell$.

The construction of $\on{cl}_a'$ is functorial in $\F$. Moreover, by the uniqueness property, for all finite field extensions $\F\subset K\subset L$, the pull-back map \[\mathbb{H}^4(X_K,\Z(2))\otimes \Z_\ell\to \mathbb{H}^4(X_L,\Z(2))\otimes \Z_\ell\] sends $H^1(K,N^1H^3(\ov{X},\Z_\ell(2)))'$ to $H^1(L,N^1H^3(\ov{X},\Z_\ell(2)))'$. This shows that the $H^1(K,N^1H^3(\ov{X},\Z_\ell(2)))'$ form a directed system, where $K/\F$ ranges over all finite field extensions. 
Let 
\[J^3_{a,\ell}(\ov{X})'\coloneqq \varinjlim_{K/\F} H^1(K,N^1H^3(\ov{X},\Z_\ell(2)))'\subset \varinjlim_{K/\F}\mathbb{H}^4(X_K, \Z(2))\otimes \Z_\ell,\]
where $K/\F$ ranges over all finite field extensions. 
Using \cref{motivic},
we obtain a lift of the $l$-adic algebraic Abel-Jacobi map over $\ov{\F}$:
\[
CH^2(\ov{X})_{\alg}\otimes \Z_\ell\xrightarrow{\sim}J^3_{a,\ell}(\ov{X})'\hookrightarrow \mathbb{H}^4(\ov{X},\Z(2))\otimes \Z_\ell,
\]
which concides with the restriction of the cycle map \[CH^2(\ov{X})\otimes \Z_\ell\rightarrow \mathbb{H}^4(\ov{X},\Z(2))\otimes \Z_\ell.\]

Now the commutative diagram with exact rows in the proof of \cite[Theorem 6.3]{colliot2013cycles}
\begin{equation}\label{5.14-1}
\adjustbox{scale=0.93,center}{
\begin{tikzcd}
0\ar[r] &CH^2(X)\otimes \Z_\ell \ar[r]\ar[d] &\mathbb{H}^4(X,\Z(2))\otimes\Z_\ell\ar[r]\ar[twoheadrightarrow]{d} & H^3_{\nr}(X,\Q_\ell/\Z_\ell(2))\ar[r]\ar[d]&0\\
0\ar[r] &CH^2(\ov{X})^G\otimes \Z_\ell \ar[r] &\mathbb{H}^4(\ov{X},\Z(2))^G\otimes \Z_\ell\ar[r] & H^3_{\nr}(\ov{X},\Q_\ell/\Z_\ell(2))^G&
\end{tikzcd}
}
\end{equation}
restricts to another commutative diagram with exact rows
\begin{equation}\label{5.14-2}
\adjustbox{scale=0.93,center}{
\begin{tikzcd}
0\ar[r] &CH^2(X)_{\alg}\otimes \Z_\ell \ar[r,"\cl_a'"]\ar[d] &H^1(\F,N^1H^3(\ov{X},\Z_\ell(2)))'\ar[r]\ar[twoheadrightarrow]{d} & \Coker(\cl_a')\ar[r]\ar[d]&0\\
0\ar[r] &\left(CH^2(\ov{X})_{\alg}\right)^G\otimes \Z_\ell \ar[r,"\sim"] &(J^3_{a,\ell}(\ov{X})')^G \ar[r]& 0.&
\end{tikzcd}
}
\end{equation}
 Applying the snake lemma to (\ref{5.14-1}) and (\ref{5.14-2}) and using \cref{motivic}, the conclusion follows from the constructions of (\ref{ct-kahn}) in \cite[Theorem 6.8]{colliot2013cycles} and (\ref{wajsequence}) in \cref{thm-sequence}.
\end{proof}

\section{Proof of Theorems \ref{thm-csc}, \ref{thm-RCC}, and \ref{thm-Fano}}\label{sec8}

\begin{proof}[Proof of Theorem \ref{thm-csc}]
If the equality $N^1H^3(\ov{X},\Z_\ell(2))=\widetilde{N}^1H^3(\ov{X},\Z_\ell(2))$ holds, \cref{lem-algajinj} and \cref{thm-WAJ} show that $CH^2(X)_{\Falg}\otimes \Z_\ell=CH^2(X)_{\alg}\otimes \Z_\ell$ and that the $\ell$-adic  
algebraic
Abel-Jacobi map \[\cl_a\colon CH^2(X)_{\alg}\otimes \Z_\ell\rightarrow H^1(\F, N^{1}H^{3}(\ov{X},\Z_\ell(2)))\] is an isomorphism.
\cref{thm-sequence} now implies that the homomorphism
\[\varphi_\ell\colon \Ker\left(CH^2(X)\rightarrow CH^2(\overline{X})\right)\{\ell\}\rightarrow H^1(\F, H^3(\overline{X}, \Z_\ell(2))_{\tors})\] is an isomorphism and the base change map $CH^2(X)_{\alg}\{\ell\}\rightarrow \left(CH^2(\overline{X})_{\alg}\right)^G\{\ell\}$ is surjective.
This finishes the proof.
\end{proof}

\begin{rem}
Let $X$ be a smooth projective geometrically connected $\F$-variety as in 
\cite[Theorem 1.3]{scavia2022cohomology} or \cite[Theorem 1.4]{scavia2022cohomology}. It follows from 
\cref{thm-csc} that the inclusion $\widetilde{N}^1H^3(\ov{X},\Z_\ell(2))\subset N^1H^3(\ov{X},\Z_\ell(2))$ is strict.

Suppose that $X$ is as in the proof of \cite[Theorem 1.3]{scavia2022cohomology}. 
By construction, $X$ is a projective approximation of $B_{\F}(\text{PGL}_{\ell}\times \mathbb{G}_m)$; see \cite[Section 4.2, Proof of Theorem 1.3]{scavia2022cohomology}.
The cohomology and Chow ring of $B_{\F}\on{PGL}_\ell$ have been computed by Vistoli \cite{vistoli2007cohomology}. In particular, $H^3(\ov{X},\Z_\ell(2))=\Z/\ell$ and $H^4(\ov{X},\Z_\ell(2))=\Z_\ell$. By \cite[Lemme 3.12]{kahn2012classes}, the group $H^3_{\on{nr}}(\ov{X},\Z_\ell(2))$ is torsion-free, and hence
\[
H^3(\ov{X},\Z_\ell(2))=N^1H^3(\ov{X},\Z_\ell(2))=\Z/\ell.
\]
Since the inclusion $\widetilde{N}^1H^3(\ov{X},\Z_\ell(2))\subset N^1H^3(\ov{X},\Z_\ell(2))$ is strict, we deduce that \[
\widetilde{N}^1H^3(\ov{X},\Z_\ell(2))=0.\]
Meanwhile, since $H^4(\ov{X},\Z_\ell(2))=\Z_\ell$ is torsion-free, \cite[Theorem 4]{colliot1983torsion} shows that $CH^2(X)\{\ell\}$ injects into $H^1(\F, H^3(\ov{X},\Z_\ell(2)))=H^1(\F, H^3(\ov{X},\Z_\ell(2))_{\tors})=\Z/\ell$ via (\ref{strong-tate}).
Since every non-zero element of the latter group is not algebraic, we conclude that
\[CH^2(X)\{\ell\}=CH^2(X)_{\alg}\{\ell\}=CH^2(X)_{\Falg}\{\ell\}=0.\]
\end{rem}

\begin{prop}\label{prop-strongmiddletate}
    Let $X$ be a smooth projective geometrically connected variety over a finite field $\F$ and $\ell$ be a prime number invertible in $\F$. 
    Consider the following statements:
    \begin{enumerate}
\item $H^3_{\nr}(X,\Q_\ell/\Z_\ell(2))=0$;
\item (\ref{strong-tate}) has torsion-free cokernel;
\item (\ref{medium-tate}) has torsion-free cokernel.
\end{enumerate}
Then (1) implies (2), and (2) implies (3).
Moreover, the statements (1) and (2) are equivalent if $CH_0(\ov{X})$ is supported in dimension $\leq 2$, and the statements (2) and (3) are equivalent if $H^3(\ov{X},\Z_\ell(2))=\widetilde{N}^1H^3(\ov{X},\Z_\ell(2))$.
\end{prop}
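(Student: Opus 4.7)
The plan is to prove the four distinct implications separately: (1)$\Rightarrow$(2), (2)$\Rightarrow$(3), (2)$\Rightarrow$(1) under the $CH_0$-support hypothesis, and (3)$\Rightarrow$(2) under the strong coniveau hypothesis. The first two unconditional implications will be handled via the Colliot-Th\'el\`ene--Kahn theorem \cite[Theorem 2.2]{colliot2013cycles} and the Hochschild-Serre short exact sequence
\begin{equation}\label{plan-hs}
0 \to H^1(\F, H^3(\ov X, \Z_\ell(2))) \to H^4(X, \Z_\ell(2)) \to H^4(\ov X, \Z_\ell(2))^G \to 0,
\end{equation}
respectively; the hypotheses will enter only in the two reverse directions.

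For (1)$\Rightarrow$(2), the Colliot-Th\'el\`ene--Kahn theorem identifies the torsion subgroup of the cokernel of (\ref{strong-tate}) with $H^3_{\nr}(X,\Q_\ell/\Z_\ell(2))$ modulo its maximal divisible subgroup, so (1) immediately yields torsion-freeness. For (2)$\Rightarrow$(3), setting $B := H^1(\F,H^3(\ov X,\Z_\ell(2)))$, the sequence (\ref{plan-hs}) identifies the cokernel of (\ref{medium-tate}) with the cokernel of (\ref{strong-tate}) modulo the image of $B$; since $H^3(\ov X,\Z_\ell(2))$ has Frobenius-weight $-1\neq 0$, \cref{lem-frobenius} shows that $B$ is finite, so its image in a torsion-free group must vanish and the two cokernels coincide. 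For (3)$\Rightarrow$(2) under $H^3(\ov X, \Z_\ell(2)) = \widetilde N^1 H^3(\ov X, \Z_\ell(2))$, this hypothesis forces $N^1 H^3 = \widetilde N^1 H^3 = H^3$, and by \cref{thm-surjection} applied with $i=2$ (together with the injectivity statement established just after (\ref{codim2})) the $\ell$-adic algebraic Abel-Jacobi map $\cl_a$ surjects onto $H^1(\F, N^1 H^3) = B$; via diagram (\ref{hs-intro}) this forces $B$ to lie in the image of (\ref{strong-tate}), so the projection in (\ref{plan-hs}) identifies the cokernels of (\ref{strong-tate}) and (\ref{medium-tate}), and (3) transfers to (2).

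The remaining and most delicate implication is (2)$\Rightarrow$(1) under the $CH_0$-support hypothesis. From (2), the Colliot-Th\'el\`ene--Kahn theorem gives that $H^3_{\nr}(X,\Q_\ell/\Z_\ell(2))$ is $\ell$-divisible (and it is already $\ell$-primary torsion by coefficient reasons). To show that it vanishes, the plan is to apply a Bloch-Srinivas diagonal decomposition: the support hypothesis yields an integer $N\geq 1$ and a decomposition $N \Delta_{\ov X} = \Gamma_1 + \Gamma_2$ in $CH^d(\ov X \times \ov X)$ with $\Gamma_1$ supported on $\ov X \times W$ for some closed $W \subset \ov X$ with $\dim W \leq 2$ and $\Gamma_2$ supported on $D \times \ov X$ for some divisor $D$. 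The action of $\Gamma_1^*$ on $H^3_{\nr}(\ov X,\Q_\ell/\Z_\ell(2))$ factors through $H^3_{\nr}$ of an $\ell'$-alteration of $W$, which vanishes because the function field of a smooth projective surface over an algebraically closed field has $\ell$-cohomological dimension $2$, while $\Gamma_2^*$ acts trivially on any unramified class by a standard coniveau argument. Hence $N \cdot H^3_{\nr}(\ov X, \Q_\ell/\Z_\ell(2)) = 0$. Combining this with the finiteness of the kernel of $H^3_{\nr}(X,\Q_\ell/\Z_\ell(2)) \to H^3_{\nr}(\ov X,\Q_\ell/\Z_\ell(2))$ (coming from sequence (\ref{ct-kahn}) and the finiteness of $H^1(\F, H^3(\ov X,\Z_\ell(2))_{\tors})$ from \cref{lem-frobenius}) and with the $\ell$-divisibility of $H^3_{\nr}(X,\Q_\ell/\Z_\ell(2))$, a short diagram chase will force $H^3_{\nr}(X,\Q_\ell/\Z_\ell(2)) = 0$. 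The main obstacle I anticipate is making the Bloch-Srinivas action on unramified cohomology fully rigorous, in particular verifying that $\Gamma_2^*$ acts trivially on $H^3_{\nr}(\ov X,\Q_\ell/\Z_\ell(2))$; this is the kind of coniveau bookkeeping that requires some care.
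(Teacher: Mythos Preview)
Your treatment of (1)$\Rightarrow$(2), (2)$\Rightarrow$(3), and (3)$\Rightarrow$(2) matches the paper's argument (the paper is terser, but the ingredients are the same: \cite[Theorem 2.2]{colliot2013cycles}, the Hochschild--Serre short exact sequence together with the finiteness of $H^1(\F,H^3(\ov X,\Z_\ell(2)))$, and \cref{thm-WAJ}/\cref{thm-surjection}).

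The gap is in your (2)$\Rightarrow$(1). You claim that the kernel of
\[
H^3_{\nr}(X,\Q_\ell/\Z_\ell(2)) \longrightarrow H^3_{\nr}(\ov X,\Q_\ell/\Z_\ell(2))
\]
is finite ``from sequence (\ref{ct-kahn}) and the finiteness of $H^1(\F,H^3(\ov X,\Z_\ell(2))_{\tors})$''. But (\ref{ct-kahn}) only shows that this kernel sits in an exact sequence whose left term is a quotient of the finite group $H^1(\F,H^3(\ov X,\Z_\ell(2))_{\tors})$ and whose right term is $\Coker\!\big(CH^2(X)\to CH^2(\ov X)^G\big)\{\ell\}$. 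There is no a priori reason for this last cokernel to be finite, so the finiteness of the kernel is not established. Without it, the combination ``$A$ divisible, image of $A$ has bounded exponent, kernel finite'' breaks down (a divisible $\ell$-primary group can certainly contain nonzero finite subgroups, so knowing only that $A$ sits inside an extension with one finite piece is not enough).

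The paper sidesteps this entirely: under the hypothesis that $CH_0(\ov X)$ is supported in dimension $\leq 2$, it invokes \cite[Proposition 3.2]{colliot2013cycles}, which already gives that $H^3_{\nr}(X,\Q_\ell/\Z_\ell(2))$ itself is \emph{finite}. Then \cite[Theorem 2.2]{colliot2013cycles} identifies this finite group with the torsion in the cokernel of (\ref{strong-tate}), so (2) and (1) become equivalent on the nose. If you prefer to keep your Bloch--Srinivas approach rather than cite that proposition, the correct fix is to run the decomposition argument over $X$ rather than over $\ov X$: spread the diagonal decomposition on $\ov X\times\ov X$ to a finite extension and corestrict, obtaining $N\cdot H^3_{\nr}(X,\Q_\ell/\Z_\ell(2))=0$ directly. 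A divisible group of bounded exponent is zero, and you are done without ever needing the kernel to be finite.
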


\begin{rem}
As recalled in the introduction, the relation between statements (1) and (2) of \cref{prop-strongmiddletate} is due to Colliot-Th\'el\`ene and Kahn \cite{colliot2013cycles}. The relation between (2) and (3) is new and follows from our work in \Cref{section-WAJfinitefield}.

Statement (1) (resp. (2)) in \cref{prop-strongmiddletate} is conjecturally equivalent to the surjectivity of (\ref{strong-tate}) (resp. (\ref{medium-tate})).
More precisely,
the Tate conjecture for codimension $2$ cycles on $X$ predicts that (\ref{medium-tate}) has finite cokernel. Using \Cref{lem-frobenius}, this is equivalent to the prediction that (\ref{strong-tate}) has finite cokernel.
\end{rem}

\begin{proof}[Proof of \cref{prop-strongmiddletate}]
By \cite[Theorem 2.2]{colliot2013cycles}, (1) implies (2). Since the natural map $H^4(X,\Z_\ell(2))\rightarrow H^4(\ov{X},\Z_\ell(2))^G$ is surjective, (2) implies (3).

If $CH_0(\ov{X})$ is supported in dimension $\leq 2$, then $H^3_{\nr}(X,\Q_\ell/\Z_\ell(2))$ is finite by \cite[Proposition 3.2]{colliot2013cycles}; the statements (1) and (2) are then equivalent by \cite[Theorem 2.2]{colliot2013cycles}.
If $H^3(\ov{X},\Z_\ell(2))=\widetilde{N}^1H^3(\ov{X},\Z_\ell(2))$, then the equivalence of (2) and (3) follows from (\ref{hs-intro}), (\ref{codim2}), and \cref{thm-WAJ}.
\end{proof}

\begin{ex}
Let $X$ be a smooth projective geometrically retract rational $\F$-variety.
Then, as we recalled in the proof of \cref{cor-retractrational}, $\ov{X}$ admits an integral Chow decomposition of the diagonal.
As a consequence, we have $CH_0(\ov{X}_{\ov{K}})=\Z$, where $K$ is the function field of $\ov{X}$ (in particular, $CH_0(\ov{X})$ is supported in dimension $\leq 2$) and $H^3(\ov{X},\Z_\ell(2))=\widetilde{N}^1H^3(\ov{X},\Z_\ell(2))$.
Hence the statements (1) to (3) in \cref{prop-strongmiddletate} are equivalent for $X$.
\end{ex}

\begin{ex}
Pirutka \cite{pirutka2011groupe} constructed a geometrically rational $\F$-variety $X$ of dimension $5$ for which statements (1) to (3) in \cref{prop-strongmiddletate} may fail in general. Indeed, Pirutka showed that the pull-back map $CH^2(X)\to CH^2(\ov{X})^G$ is not surjective. By the exact sequence of Colliot-Th\'el\`ene and Kahn \cite[Th\'eor\`eme 6.8]{colliot2013cycles}, this implies that $H^3_{\nr}(X,\Q_\ell/\Z_\ell(2))\neq 0$, that is, statement (1) of \cref{prop-strongmiddletate} is false for $X$. Since $\ov{X}$ is a split quadric, $CH_0(\ov{X}_{\ov{K}})=\Z$, where $K$ is the function field of $\ov{X}$, and $H^3(\ov{X},\Z_\ell(2))=0$. Therefore statements of (2) and (3) are also false for $X$.
\end{ex}

\begin{ex}
Using \Cref{prop-strongmiddletate}, we now recover some of the results of \cite{colliot2023conjecture} and \cite{scavia2022autour}.

Let $X$ be the product of a smooth projective geometrically connected surface $S$ such that $b_2(\ov{S})=\rho(\ov{S})$ and a smooth projective geometrically connected curve $C$ over $\F$. We claim that 
\begin{equation}\label{coniveau-x}
    H^3(\ov{X},\Z_\ell(2))=\widetilde{N}^1H^3(\ov{X},\Z_\ell(2)).
\end{equation}
Indeed, the K\"unneth formula yields:
\begin{equation}\label{kunneth-eq}
\resizebox{0.95\hsize}{!}{$
H^3(\ov{X},\Z_\ell(2))=[H^1(\ov{S},\Z_\ell(1))\otimes H^2(\ov{C},\Z_\ell(1))]\oplus [H^2(\ov{S},\Z_\ell(1))\otimes H^1(\ov{C},\Z_\ell(1))]\oplus H^3(\ov{S},\Z_\ell(2)).
$}
\end{equation}
Since $\dim(C)=1$, we have 
\begin{equation}\label{product-c}H^2(\ov{C},\Z_\ell(1))=\widetilde{N}^1H^2(\ov{C},\Z_\ell(1)).\end{equation}
The cokernel of the cycle map $\Pic(\ov{S})\otimes_{\Z}\Z_{\ell}\to H^2(\ov{S},\Z_\ell(1))$ is torsion-free by the Kummer sequence, and it is torsion because $b_2(\ov{S})=\rho(\ov{S})$. Thus the cycle map $\Pic(\ov{S})\otimes_{\Z}\Z_{\ell}\to H^2(\ov{S},\Z_\ell(1))$ is surjective, and hence 
\[
H^2(\ov{S},\Z_\ell(1))=N^1H^2(\ov{S},\Z_\ell(1)).
\]
Since $\ov{S}$ is projective, every element of $\Pic(\ov{S})$ can be written as the difference of two very ample divisor classes. By Bertini's Theorem \cite[II. Theorem 8.18]{hartshorne}, a very ample divisor class on $\ov{S}$ may be represented by a smooth very ample divisor. Thus $N^1H^2(\ov{S},\Z_\ell(1))=\widetilde{N}^1H^2(\ov{S},\Z_\ell(1))$ and hence
\begin{equation}\label{product-s-2}
H^2(\ov{S},\Z_\ell(1))=\widetilde{N}^1H^2(\ov{S},\Z_\ell(1)).
\end{equation}
Let $Z\subset \ov{S}$ be a smooth very ample divisor. By the Lefschetz hyperplane section theorem, the pushforward $H_1(Z,\Z_{\ell})\to H_1(\ov{S},\Z_{\ell})$ is surjective, and hence 
\begin{equation}\label{product-s-3}H^3(\ov{S},\Z_\ell(2))=\widetilde{N}^1H^3(\ov{S},\Z_\ell(2)).
\end{equation}
Now equality (\ref{coniveau-x}) follows from the combination of (\ref{kunneth-eq})-(\ref{product-s-3}). 

By \cref{prop-strongmiddletate}, we conclude that the statements (2) and (3) of \cref{prop-strongmiddletate} are equivalent for $X$. This recovers \cite[Th\'eor\`eme 1.3]{colliot2023conjecture}, where the equivalence of (2) and (3) was proved when $S$ is geometrically $CH_0$-trivial (see \cite[Introduction]{colliot2023conjecture} for the definition), and also \cite[Theorem 1.3]{scavia2022autour}, where the equivalence was proved under the weaker assumption $b_2(\ov{S})=\rho(\ov{S})$. 

Suppose further that $S$ is an Enriques surface.
In this case, the assumptions of \cref{prop-strongmiddletate} are all satisfied, hence  statements (1), (2) and (3) are equivalent for $X$.
When no point of order $2$ of the Jacobian of $C$ is defined over $\F$, this was proved in \cite[Th\'eor\`eme 1.4]{colliot2023conjecture} conditionally on the Tate conjecture for $1$-cycles on surfaces over finite fields, and unconditionally in \cite[Corollary 1.5]{scavia2022autour}. (Note that \cite[Corollary 1.5]{scavia2022autour} is stated  when $C$ has genus $1$ for simplicity.)
\end{ex}

\begin{proof}[Proof of Theorem \ref{thm-RCC}]
Let $X$ be a smooth projective rationally connected threefold over a number field $K$ such that $H^3_B(X(\C),\Z)$ is torsion-free.
Results of Voisin, \cite[Theorem 2]{voisin2006integral} and \cite[Theorem 0.2]{voisin2020coniveau}, imply that $H^3_B(X(\C),\Z)=\widetilde{N}^1H^3_B(X(\C),\Z)$ and that $H^4_B(X(\C),\Z)$ is spanned by algebraic classes. 
For every prime $p$ of $O_K$, let $k_p$ be the residue field of $p$. A spreading-out argument now shows that for all but finitely many prime numbers $p$, if $X_p$ is the mod $p$ reduction of $X$, then:
\begin{itemize}
    \item[(i)] $X_p$ is a smooth projective rationally chain connected $k_p$-variety (note that rational chain connectivity specializes by \cite[Chapter IV, Corollary 3.5.2]{kollar1996rationalcurves}),
    \item[(ii)] $H^3(\ov{X}_p,\Z_\ell(2))=\widetilde{N}^1H^3(\ov{X}_p,\Z_\ell(2))$ for all prime numbers $\ell$ invertible in $k_p$, and
    \item[(iii)] there exists a finite field extension $k'_p/k_p$ such that for all finite extensions $E/k'_p$ and for all prime number $\ell$ invertible in $k_p$ the cycle map $CH^2(X_{E})\otimes\Z_\ell\to H^4(\ov{X}_p,\Z_\ell(2))$ is surjective.
\end{itemize} 
By (iii), the $E$-variety $X_{E}$ satisfies statement (3) of \cref{prop-strongmiddletate}. 
By (ii), $X_{E}$ also satisfies (2). Finally, (i) implies that $CH_0((\ov{X}_p)_{\ov{K}})=\Z$, where $K$ is the function field of $\ov{X}_p$, hence $X_{E}$ also satisfies (1) of \cref{prop-strongmiddletate}.
\end{proof}

\begin{proof}[Proof of Theorem \ref{thm-Fano}]
Let $X\subset \P^N$ be a smooth Fano complete intersection threefold over a finite field $\F$.
Since $X$ is Fano, $X$ is rationally chain connected \cite[Chapter V, Theorem 2.13]{kollar1996rationalcurves}, hence $CH_0(\ov{X}_{\ov{K}})=\Z$, where $K$ is the function field of $\ov{X}$.

Suppose that the Fano scheme $F(X)$ of lines on $X$ is smooth of the expected dimension. Let $S=F(X)$ be the Fano scheme of lines, and $\Gamma\subset S\times X$ be the universal line.  Since $X$ is a complete intersection, it admits a lift to a smooth complete intersection over a discrete valuation ring with fraction field of characteristic zero. Then $S$ lifts to the Fano scheme of lines of the lift of $X$, and $\Gamma$ lifts to the universal line (this follows immediately from the definition of the Hilbert scheme functor). A result of Voisin \cite[Theorem 1.13]{voisin2020coniveau} and the smooth proper base change theorem in \'etale cohomology now imply that $\Gamma_*\colon H_1(\ov{S},\Z_\ell)\rightarrow H^3(\ov{X},\Z_\ell(2))$ is surjective. 
This shows that $H^3(\ov{X},\Z_\ell(2))=\widetilde{N}^1H^3(\ov{X},\Z_\ell(2))$.
Thus the statements (1), (2), and (3) in \cref{prop-strongmiddletate} are all equivalent for $X$.

By the Lefschetz hyperplane theorem, $H^4(\ov{X},\Z_\ell(2))\simeq\Z_\ell$ is generated by the class of a  line. The Fano scheme $F(X)$ is geometrically connected by \cite[Theorem 2.1]{debarre1998variete} and admits a zero-cycle of degree $1$ by the Lang--Weil estimates, hence 
(\ref{medium-tate})
is surjective.  
It follows that the statements (1), (2), and (3) in \cref{prop-strongmiddletate} are all satisfied by $X$.
In particular, $H^3_{\nr}(X,\Q_\ell/\Z_\ell(2))=0$.
\end{proof}

\begin{rem}
In the case of cubic threefolds, \cref{thm-Fano} recovers a theorem of Colliot-Th\'el\`ene \cite[Theorem 5.1]{colliot2019troisieme}, without appealing to theorems of Parimala--Suresh \cite{parimala2016degree} and Kato--Saito \cite{kato1983unramified}.
\end{rem}

\section*{Acknowledgements}

We thank Burt Totaro and Jean-Louis Colliot-Th\'el\`ene for helpful comments and suggestions. 
We also thank the referee for a careful reading of the text and for many useful suggestions.


\begin{thebibliography}{10}

\bibitem{achter2019functorial}
Jeffrey~D. Achter, Sebastian Casalaina-Martin, and Charles Vial.
\newblock A functorial approach to regular homomorphisms.
\newblock {\em Algebr. Geom.}, 10(1):87--129, 2023.

\bibitem{achter2019parameter}
Jeffrey~D. Achter, Sebastian Casalaina-Martin, and Charles Vial.
\newblock Parameter spaces for algebraic equivalence.
\newblock {\em Int. Math. Res. Not. IMRN}, (6):1863--1893, 2019.

\bibitem{selman2016algebraic}
Selman Akbulut and Mustafa Kalafat.
\newblock Algebraic topology of {$G_2$} manifolds.
\newblock {\em Expo. Math.}, 34(1):106--129, 2016.

\bibitem{altman1977fano}
Allen~B. Altman and Steven~L. Kleiman.
\newblock Foundations of the theory of {F}ano schemes.
\newblock {\em Compositio Math.}, 34(1):3--47, 1977.

\bibitem{benoist2021coniveau}
Olivier Benoist and John~Christian Ottem.
\newblock Two coniveau filtrations.
\newblock {\em Duke Math. J.}, 170(12):2719--2753, 2021.

\bibitem{bloch1974gersten}
Spencer Bloch and Arthur Ogus.
\newblock Gersten's conjecture and the homology of schemes.
\newblock {\em Ann. Sci. \'{E}cole Norm. Sup. (4)}, 7:181--201 (1975), 1974.

\bibitem{buhler1997cycles}
Joe Buhler, Chad Schoen, and Jaap Top.
\newblock Cycles, {$L$}-functions and triple products of elliptic curves.
\newblock {\em J. Reine Angew. Math.}, 492:93--133, 1997.

\bibitem{colliot2019troisieme}
Jean-Louis Colliot-Th\'{e}l\`ene.
\newblock Troisi\`eme groupe de cohomologie non ramifi\'{e}e des hypersurfaces
  de {F}ano.
\newblock {\em Tunis. J. Math.}, 1(1):47--57, 2019.

\bibitem{colliot2013cycles}
Jean-Louis Colliot-Th\'{e}l\`ene and Bruno Kahn.
\newblock Cycles de codimension 2 et {$H^3$} non ramifi\'{e} pour les
  vari\'{e}t\'{e}s sur les corps finis.
\newblock {\em J. K-Theory}, 11(1):1--53, 2013.

\bibitem{colliot1983torsion}
Jean-Louis Colliot-Th\'{e}l\`ene, Jean-Jacques Sansuc, and Christophe
  Soul\'{e}.
\newblock Torsion dans le groupe de {C}how de codimension deux.
\newblock {\em Duke Math. J.}, 50(3):763--801, 1983.

\bibitem{colliot2023conjecture}
Jean-Louis Colliot-Th\'{e}l\`ene and Federico Scavia.
\newblock Sur la conjecture de {T}ate enti\`ere pour le produit d'une
              courbe et d'une surface {$CH_0$}-triviale sur un corps fini.
\newblock {\em Rend. Circ. Mat. Palermo (2)}, 72(5):2895--2927, 2023.

\bibitem{debarre1998variete}
Olivier Debarre and Laurent Manivel.
\newblock Sur la vari\'{e}t\'{e} des espaces lin\'{e}aires contenus dans une
  intersection compl\`ete.
\newblock {\em Math. Ann.}, 312(3):549--574, 1998.

\bibitem{deligne1974weil}
Pierre Deligne.
\newblock La conjecture de {W}eil. {I}.
\newblock {\em Inst. Hautes \'{E}tudes Sci. Publ. Math.}, (43):273--307, 1974.

\bibitem{deligne1980weil}
Pierre Deligne.
\newblock La conjecture de {W}eil. {II}.
\newblock {\em Inst. Hautes \'{E}tudes Sci. Publ. Math.}, (52):137--252, 1980.

\bibitem{deligne1974theorie3}
Pierre Deligne.
\newblock Th\'{e}orie de {H}odge. {III}.
\newblock {\em Inst. Hautes \'{E}tudes Sci. Publ. Math.}, (44):5--77, 1974.

\bibitem{SGA4.5}
Pierre Deligne.
\newblock {\em Cohomologie \'etale}.
\newblock Number 569 in Lecture Notes in Mathematics. Springer-Verlag, 1977.

\bibitem{edidin1997characteristic}
Dan Edidin and William Graham.
\newblock Characteristic classes in the {C}how ring.
\newblock {\em J. Algebraic Geom.}, 6(3):431--443, 1997.

\bibitem{ekedahl2009approximating}
Torsten Ekedahl.
\newblock Approximating classifying spaces by smooth projective varieties.
\newblock {\em arXiv preprint arXiv:0905.1538}, 2009.

\bibitem{feng2020etale}
Tony Feng.
\newblock \'{E}tale {S}teenrod operations and the {A}rtin-{T}ate pairing.
\newblock {\em Compos. Math.}, 156(7):1476--1515, 2020.

\bibitem{friedlander1981etale}
Eric M. Friedlander, and Brian Parshall. 
\newblock Étale cohomology of reductive groups. 
\newblock {\em Algebraic $K$-theory, Evanston 1980} (Proc. Conf., Northwestern Univ., Evanston, Ill., 1980), pp. 127--140, Lecture Notes in Math., 854, Springer, Berlin, 1981.

\bibitem{friedlander1982etale}
Eric~M. Friedlander.
\newblock {\em \'{E}tale homotopy of simplicial schemes}, volume 104 of {\em
  Annals of Mathematics Studies}.
\newblock Princeton University Press, Princeton, N.J.; University of Tokyo
  Press, Tokyo, 1982.

\bibitem{fulton1998intersection}
William Fulton.
\newblock {\em Intersection theory}, volume~2 of {\em Ergebnisse der Mathematik
  und ihrer Grenzgebiete. 3. Folge. A Series of Modern Surveys in Mathematics
  [Results in Mathematics and Related Areas. 3rd Series. A Series of Modern
  Surveys in Mathematics]}.
\newblock Springer-Verlag, Berlin, second edition, 1998.

\bibitem{hartshorne}
Robin Hartshorne.
\newblock {\em Algebraic Geometry}, Graduate Texts in Mathematics, No. 52.
\newblock Springer-Verlag, New York, 1977.


\bibitem{illusie2014gabber}
Luc Illusie and Michael Temkin.
\newblock Expos\'{e} {X}. {G}abber's modification theorem (log smooth case).
\newblock Number 363-364, pages 167--212. 2014.
\newblock Travaux de Gabber sur l'uniformisation locale et la cohomologie
  \'{e}tale des sch\'{e}mas quasi-excellents.

\bibitem{jannsenn1990mixed}
Uwe Jannsen.
\newblock {\em Mixed motives and algebraic {$K$}-theory}, volume 1400 of {\em
  Lecture Notes in Mathematics}.
\newblock Springer-Verlag, Berlin, 1990.
\newblock With appendices by S. Bloch and C. Schoen.

\bibitem{kahn2002GL}
Bruno Kahn.
\newblock The {G}eisser-{L}evine method revisited and algebraic cycles over a
  finite field.
\newblock {\em Math. Ann.}, 324(3):581--617, 2002.

\bibitem{kahn2012classes}
Bruno Kahn.
\newblock Classes de cycles motiviques \'{e}tales.
\newblock {\em Algebra Number Theory}, 6(7):1369--1407, 2012.

\bibitem{kato1983unramified}
Kazuya Kato and Shuji Saito.
\newblock Unramified class field theory of arithmetical surfaces.
\newblock {\em Ann. of Math. (2)}, 118(2):241--275, 1983.

\bibitem{kollar1996rationalcurves}
J\'{a}nos Koll\'{a}r.
\newblock {\em Rational curves on algebraic varieties}, volume~32 of {\em
  Ergebnisse der Mathematik und ihrer Grenzgebiete. 3. Folge. A Series of
  Modern Surveys in Mathematics [Results in Mathematics and Related Areas. 3rd
  Series. A Series of Modern Surveys in Mathematics]}.
\newblock Springer-Verlag, Berlin, 1996.

\bibitem{lieberman1972intermediate}
D.~Lieberman.
\newblock Intermediate {J}acobians.
\newblock In {\em Algebraic geometry, {O}slo 1970 ({P}roc. {F}ifth {N}ordic
  {S}ummer-{S}chool in {M}ath.)}, pages 125--139, 1972.

\bibitem{merkurjev1982k-cohomology}
A.~S. Merkurjev and A.~A. Suslin.
\newblock {$K$}-cohomology of {S}everi-{B}rauer varieties and the norm residue
  homomorphism.
\newblock {\em Izv. Akad. Nauk SSSR Ser. Mat.}, 46(5):1011--1046, 1135--1136,
  1982.

\bibitem{merkurjev2008unramifiedelements}
Alexander Merkurjev.
\newblock Unramified elements in cycle modules.
\newblock {\em J. Lond. Math. Soc. (2)}, 78(1):51--64, 2008.

\bibitem{milne1980etale}
 James S. Milne, 
 \newblock {\em \'Etale cohomology.} 
 \newblock Princeton Mathematical Series, No. 33. Princeton University Press, Princeton, N.J., 1980. 

\bibitem{neukirch2008cohomology}
J\"{u}rgen Neukirch, Alexander Schmidt, and Kay Wingberg.
\newblock {\em Cohomology of number fields}, volume 323 of {\em Grundlehren der
  Mathematischen Wissenschaften [Fundamental Principles of Mathematical
  Sciences]}.
\newblock Springer-Verlag, Berlin, second edition, 2008.

\bibitem{parimala2016degree}
Raman Parimala and Venapally Suresh.
\newblock Degree 3 cohomology of function fields of surfaces.
\newblock {\em Int. Math. Res. Not. IMRN}, (14):4341--4374, 2016.

\bibitem{pirutka2011groupe}
Alena Pirutka.
\newblock Sur le groupe de {C}how de codimension deux des vari\'{e}t\'{e}s sur
  les corps finis.
\newblock {\em Algebra Number Theory}, 5(6):803--817, 2011.

\bibitem{poonen2004bertini}
Bjorn Poonen.
\newblock Bertini theorems over finite fields.
\newblock {\em Ann. of Math. (2)}, 160(3):1099--1127, 2004.

\bibitem{scavia2022autour}
Federico Scavia.
\newblock {Autour de la conjecture de Tate enti\`ere pour certains produits de
  dimension $3$ sur un corps fini}.
\newblock {\em {Épijournal de Géométrie Algébrique}}, {Volume 6}, 2022.

\bibitem{scavia2022cohomology}
Federico Scavia and Fumiaki Suzuki.
\newblock Non-algebraic geometrically trivial cohomology classes over finite
  fields.
\newblock {\em arXiv preprint arXiv:2206.12732}, 2022.

\bibitem{schoen1998integral}
Chad Schoen.
\newblock An integral analog of the {T}ate conjecture for one-dimensional
  cycles on varieties over finite fields.
\newblock {\em Math. Ann.}, 311(3):493--500, 1998.

\bibitem{schoen1999image}
Chad Schoen.
\newblock On the image of the {$l$}-adic {A}bel-{J}acobi map for a variety over
  the algebraic closure of a finite field.
\newblock {\em J. Amer. Math. Soc.}, 12(3):795--838, 1999.

\bibitem{serre1997galois}
Jean-Pierre Serre.
\newblock {\em Galois cohomology}.
\newblock Springer-Verlag, Berlin, 1997.
\newblock Translated from the French by Patrick Ion and revised by the author.

\bibitem{stacks-project}
The {Stacks Project Authors}.
\newblock \textit{Stacks Project}.
\newblock http://stacks.math.columbia.edu.

\bibitem{suzuki2021factorization}
Fumiaki Suzuki.
\newblock Factorization of the {A}bel-{J}acobi maps.
\newblock {\em {\'E}pijournal de G{\'e}om{\'e}trie Alg{\'e}brique}, 5, 2021.

\bibitem{tian2022localglobal}
Zhiyu Tian.
\newblock Local-global principle and integral Tate conjecture for certain varieties
\newblock {\em arXiv preprint arXiv:2211.15915}, 2022.

\bibitem{urabe1996bilinear}
Tohsuke Urabe.
\newblock The bilinear form of the {B}rauer group of a surface.
\newblock {\em Invent. Math.}, 125(3):557--585, 1996.

\bibitem{vistoli2007cohomology}
Angelo Vistoli.
\newblock On the cohomology and the {C}how ring of the classifying space of
  {${\rm PGL}_{p}$}.
\newblock {\em Journal f{\"u}r die reine und angewandte Mathematik (Crelles
  Journal)}, 2007(610):181--227, 2007.

\bibitem{voisin2006integral}
Claire Voisin.
\newblock On integral {H}odge classes on uniruled or {C}alabi-{Y}au threefolds.
\newblock In {\em Moduli spaces and arithmetic geometry}, volume~45 of {\em
  Adv. Stud. Pure Math.}, pages 43--73. Math. Soc. Japan, Tokyo, 2006.

\bibitem{voisin2020coniveau}
Claire Voisin.
\newblock On the coniveau of rationally connected threefolds.
\newblock {\em Geom. Topol.}, 26(6):2731--2772, 2022.

\end{thebibliography}
\end{document}